\documentclass[12pt,reqno]{amsart}
\usepackage{geometry}                
\geometry{letterpaper}                   
\usepackage{color}
\usepackage{graphicx}
\usepackage{subfigure}
\usepackage{amssymb}
\usepackage{amsmath}
\usepackage{multirow} 
\usepackage{enumerate}
\usepackage{epstopdf}
\usepackage{mathrsfs}
\usepackage{epic,eepic}
\usepackage{mathrsfs}
\usepackage{amsthm}

\textwidth=17cm \oddsidemargin=-5pt \evensidemargin=-5pt
\textheight=23cm \topmargin=-10pt
\begin{document}
\vfuzz2pt 
\hfuzz2pt 
\newtheorem{thm}{Theorem}[section]
\newtheorem{model}{Model}
\newtheorem{pro}{Problem}[section]
\newtheorem{cor}[thm]{Corollary}
\newtheorem{lem}[]{Lemma}[section]
\newtheorem{prop}[]{Proposition}[section]
\theoremstyle{definition}
\newtheorem{defn}[thm]{Definition}
\theoremstyle{remark}
\newtheorem{rem}[]{Remark}[section]
\numberwithin{equation}{section}
\newtheorem{col}{Conclusion}

\baselineskip 17pt

\title[]{Self-similar solutions of the spherically symmetric Euler equations for general equations of state}%
\author[]{Jianjun Chen$^\dag$ and Geng Lai$^\ddag$}%
\address{}%
\email{}%

\thanks{$^\ddag$Corresponding author. E-mail: mathchenjianjun@163.com(Chen), laigeng@shu.edu.cn(Lai)}
\subjclass{}%
\keywords{}%

\dedicatory{$^{\dag}$Department of Mathematics, Zhejiang University of Science and Technology, Hangzhou, 310023, P.R. China\\
$^{\ddag}$Department of Mathematics, Shanghai University,
Shanghai, 200444, P.R. China}%

\subjclass{}%
\keywords{}%


\begin{abstract}
The study of spherically symmetric motion is important for the theory of explosion waves.
In this paper, we construct rigorously self-similar  solutions to the Riemann problem of the spherically symmetric Euler equations for general equations of state.
We used the assumption of self-similarity to reduce the spherically symmetric Euler equations to a system of nonlinear ordinary differential equations, from which we obtain detailed structures of solutions besides their existence.


%

\
\vskip 0pt
\noindent%
{\sc Keywords.}  Compressible Euler equations, van der Waals gas, spherical symmetry, self-similar solution.
\
\vskip 4pt
\noindent%
{\sc 2010 AMS subject classification.} Primary: 35L65; Secondary: 35L60, 35L67.
\end{abstract}

\maketitle
\section{\bf Introduction}

The 3D isentropic Euler equations has the form
\begin{equation}\label{3d}
\left\{
   \begin{aligned}
    &\rho_t+(\rho u_1)_{x_1}+(\rho u_2)_{x_2}+(\rho u_3)_{x_3}=0, \\
   &(\rho u_1)_t+(\rho u_1^2+p)_{x_1}+(\rho u_1u_2)_{x_2}+(\rho u_1 u_3)_{x_3}=0, \\
   &(\rho u_2)_t+(\rho u_1 u_2)_{x_1}+(\rho u_2^2+p)_{x_2}+(\rho u_2 u_3)_{x_3}=0,\\
&(\rho u_3)_t+(\rho u_1 u_3)_{x_1}+(\rho u_2 u_3)_{x_2}+(\rho u_3^2+p)_{x_3}=0,
  \end{aligned}
\right.
\end{equation}
where $\rho$ is the density, $(u_1, u_2, u_3)$ is the velocity, and $p=p(\rho)$ is the pressure.

The global existence of solution to the Cauchy problem for system (\ref{3d}) is still a complicated open problem. Thus it has been profitable to consider some special problems.
In this paper, we consider system (\ref{3d}) with
the Riemann initial data
\begin{equation}\label{3db}
\big(\rho, u_1, u_2, u_3\big)(0, x_1, x_2, x_3)~=~\big(\rho_0, u_0\sin\varphi\cos\theta, u_0\sin\varphi\sin\theta, u_0\cos\varphi\big),
\end{equation}
where $(x_1, x_2, x_3)=(r\sin\varphi\cos\theta, r\sin\varphi\sin\theta, r\cos\varphi)$, $r>0$ is the radial variable, $\varphi\in [0, \pi]$, $\theta\in [0, 2\pi)$, and $\rho_0$ and $u_0$ are two constants.

The problem (\ref{3d}), (\ref{3db})
allows us to look for spherically symmetric solution, i.e., $$\rho=\rho(t, r),\quad  u_1=u(x, t)\sin\varphi\cos\theta, \quad u_2=u(x, t)\sin\varphi\sin\theta,\quad u_3=u(x, t)\cos\varphi.$$
We can then reduce system (\ref{3d}) to
\begin{equation}
\left\{
          \begin{aligned}
            &\rho_t+(\rho u)_x+\frac{2\rho u}{x}=0,\\
&(\rho u)_x +(\rho u^2+p)_x+\frac{2\rho u^2}{x}
=0.
            \end{aligned}
       \right.                       \label{AE}
\end{equation}
Then the problem (\ref{3d}), (\ref{3db}) can be reduced to a Riemann initial-boundary value problem for (\ref{AE}) with the initial and boundary conditions
\begin{equation}\label{IBV}
(u, \rho)(x, 0)=(u_0, \rho_0), \quad (\rho u)(0, t)=0.
\end{equation}
The problem (\ref{AE}), (\ref{IBV}) allows us to look for self-similar solutions that depend only on the self-similar variable $\xi=x/t$.

The self-similar solutions for (\ref{AE}) was first studied by Guderley, Taylor, et al; see \cite{CF} and the survey paper \cite{Jenssen}.  Taylor \cite{Ta} used the assumption of self-similarity to reduce the spherically symmetric Euler equations for polytropic gases to a system of nonlinear autonomous ordinary differential equations and solved the ``spherical piston" problem. Zhang and Zheng \cite{Zheng} constructed several 2D self-similar radially symmetric solutions with swirl for polytropic gases.  Hu \cite{Hu} constructed 2D self-similar axisymmetric solutions for the Euler equations for a two-constant equation of state.   For more general existence of weak solutions of (\ref{AE}), we refer the reader to \cite{CJ, ChenG, CGM, DL, LW, MMU1, MMU2}.



\begin{figure}[htbp]
\begin{center}
\includegraphics[scale=0.26]{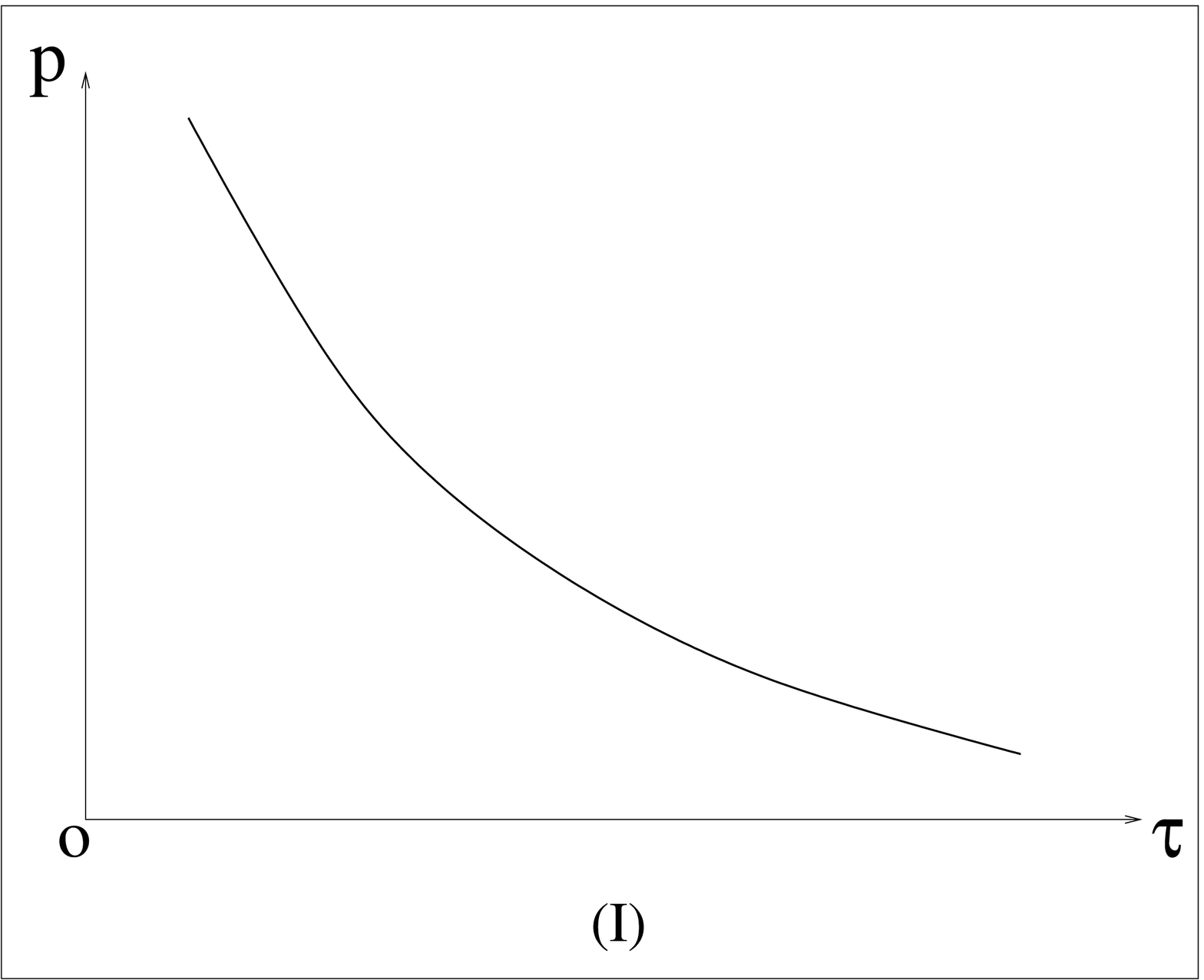}~~\includegraphics[scale=0.26]{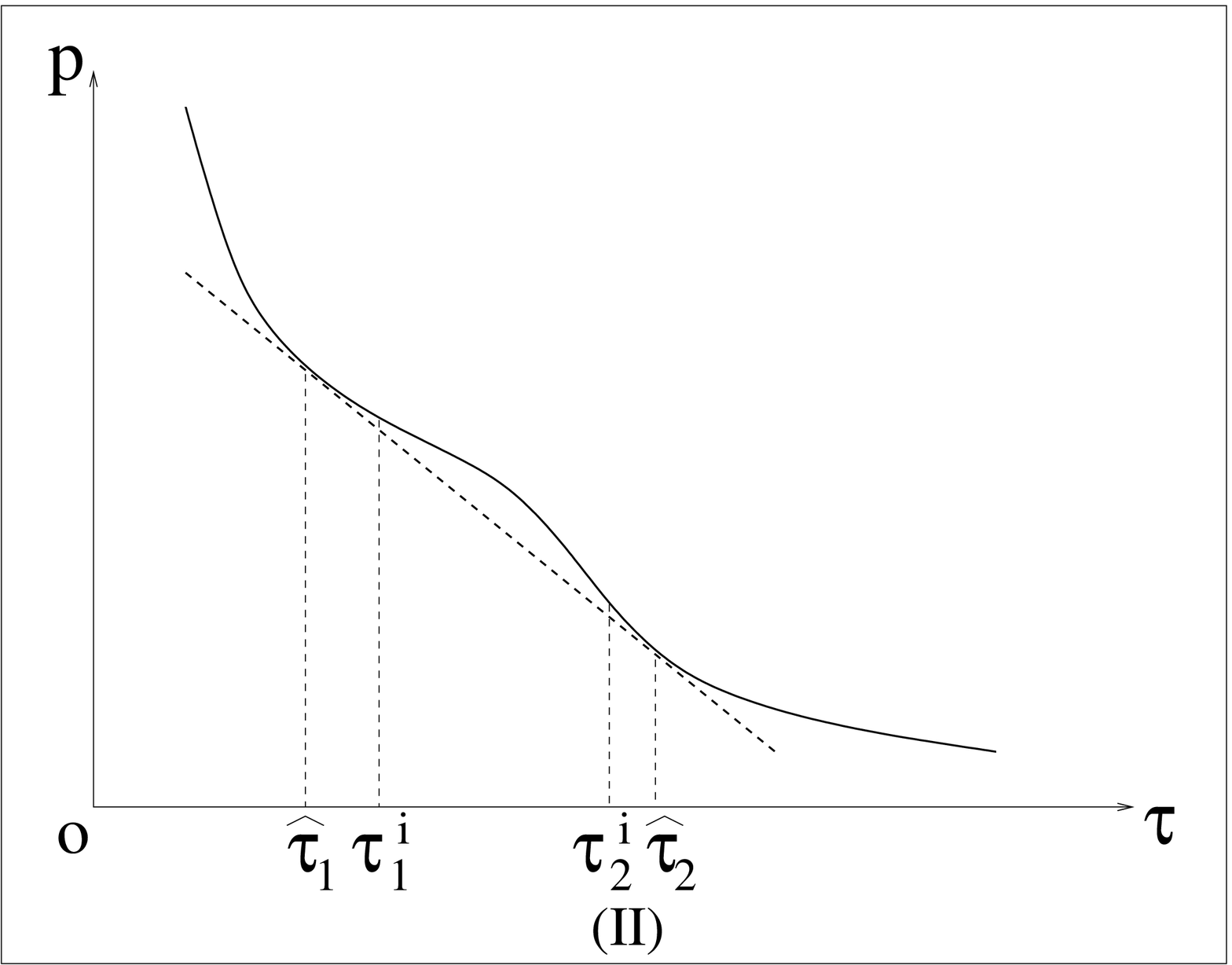}
~~\includegraphics[scale=0.26]{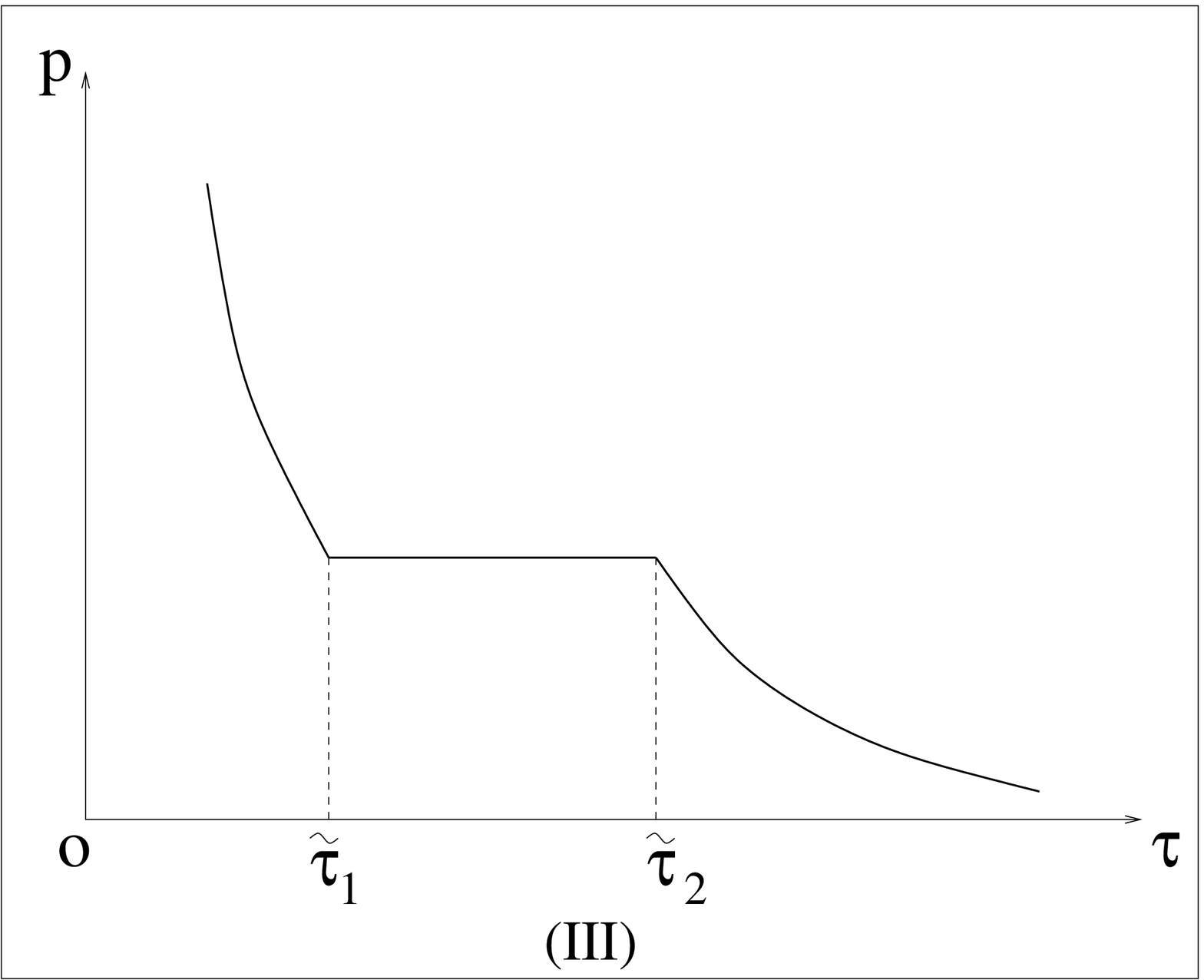}
\caption{\footnotesize Equations of sate.}
\label{Fignew1}
\end{center}
\end{figure}

In this paper, we study the problem (\ref{AE}), (\ref{IBV}) for the following three types of equations of state:
\begin{description}
  \item[I] $p'(\tau)<0$ and $p''(\tau)>0$ as $\tau>0$; see Figure \ref{Fignew1}(I).
  \item[II] $p'(\tau)<0$ as $\tau>0$; $p''(\tau)>0$ as $\tau\in (0, \tau_1^i)\cup (\tau_2^i, +\infty)$;  $p''(\tau)<0$ as $\tau\in (\tau_1^i, \tau_2^i)$; see Figure \ref{Fignew1}(II).
  \item[III] $p'(\tau)<0$ and $p''(\tau)>0$ as $\tau\in (0, \tilde{\tau}_1)\cup (\tilde{\tau}_2, +\infty)$; $p(\tau)$ is constant as $\tau\in[\tilde{\tau}_1, \tilde{\tau}_2]$; see Figure \ref{Fignew1}(III).
\end{description}
Here, $\tau=1/\rho$ is the specific volume.
These three types of equations of state can be referred for instance to the van der Waals equation of state
$
p=\frac{A}{(\tau-1)^{\gamma}}-\frac{1}{\tau^2},
$
where $A$ is a constant corresponding to the entropy, $\gamma$ is a constant between $1$ and $5/3$.
The third type equation of state may be seen as a van der Waals equation of state complemented with Maxwell's  equal areas law and may be used as a simple model of phase transition; see \cite{GN, MP} and the references cited therein.

\begin{rem}
For equation of state II, there exist $\hat{\tau}_1<\tau_1^i<\tau_2^i<\hat{\tau}_2$ such that
$$
\frac{p(\hat{\tau}_{1})-p(\hat{\tau}_2)}{\hat{\tau}_{1}-\hat{\tau}_2}=p'(\hat{\tau}_{1})=p'(\hat{\tau}_2).
$$
\end{rem}

We make the following assumptions about these equations of state:
\begin{description}
  \item[(A1)] There exists a $\nu>0$ such that $\lim\limits_{\rho\rightarrow 0}\frac{p'(\rho)}{\rho^{\nu}}=0$.
  \item[(A2)] For equation of state III, we assume $\lim\limits_{\tau\rightarrow \tilde{\tau}_1^{-}}p'(\tau)<p'(\tau_c)$, where $\tau_c>\tilde{\tau}_2$ is determined by
 $\frac{p(\tau_{c})-p(\tilde{\tau}_1)}{\tau_{c}-\tilde{\tau}_1}=p'(\tau_{c})$.
\end{description}


The main result of the paper can be stated as follows.
\begin{thm}
For equations of state I--III, the Riemann initial-boundary value problem (\ref{AE}), (\ref{IBV}) has a solution for any data $(u_0, \rho_0)$.
\end{thm}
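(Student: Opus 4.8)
The plan is to use the self-similar ansatz to convert \eqref{AE}, \eqref{IBV} into a two-point boundary-value problem for a planar ODE system, and then to build the solution by concatenating smooth integral curves, an admissible shock, and a constant central core. First I would set $\rho=\rho(\xi)$, $u=u(\xi)$ with $\xi=x/t$ and substitute into \eqref{AE}. Writing $c^2:=\mathrm{d}p/\mathrm{d}\rho=-\tau^2p'(\tau)>0$ for the square of the sound speed and $\,'=\mathrm{d}/\mathrm{d}\xi$, the mass equation gives
\[
(u-\xi)\rho'+\rho u'+\frac{2\rho u}{\xi}=0,
\]
and, after eliminating the source and the momentum terms with the help of this relation, the momentum equation reduces to $\rho(u-\xi)u'+c^2\rho'=0$. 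Solving these two linear relations for $u'$ and $\rho'$ yields the decoupled system
\[
u'=\frac{-2uc^2}{\xi\,[\,c^2-(u-\xi)^2\,]},\qquad \rho'=\frac{2\rho u\,(u-\xi)}{\xi\,[\,c^2-(u-\xi)^2\,]}.
\]
The data \eqref{IBV} translate into the conditions $(u,\rho)(\xi)\to(u_0,\rho_0)$ as $\xi\to+\infty$ (the ray $t=0$, $x>0$) and $u(0)=0$ (so that $\rho u=0$ at the center $x=0$). One checks that $(u_0,\rho_0)$ is an admissible limit as $\xi\to+\infty$, since there $u',\rho'\to0$, and that $u\equiv0$, $\rho\equiv\mathrm{const}$ is the only genuinely constant solution, which will serve as the central core.

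The construction then proceeds by integrating this system inward from $\xi=+\infty$ while analyzing the singular set, namely $\xi=0$ together with the two sonic lines $\xi=u\pm c$ on which the denominator $c^2-(u-\xi)^2$ vanishes. I would split the analysis according to the sign of $u_0$: for $u_0>0$ (explosion) a sign analysis of $u'$ and $\rho'$ shows that along the integral curve through $(u_0,\rho_0)$ both $u$ and $\rho$ decrease as $\xi$ decreases, so one follows the smooth curve toward the origin, invoking assumption (A1) to control a possible approach to vacuum $\rho\to0$; for $u_0<0$ (implosion) the inflow is compressive, the smooth curve cannot reach $u(0)=0$, and a shock must be inserted. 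This monotonicity analysis, together with a description of how the curve meets the sonic line $\xi=u-c$ (resp.\ $\xi=u+c$), dictates the order in which the pieces—outer smooth region, shock, inner smooth region, central core—are assembled.

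At any discontinuity I would place a self-similar shock at $\xi=s$ and impose the Rankine–Hugoniot conditions obtained from \eqref{AE} (the bounded source does not enter the jump relations), selecting the admissible branch by the entropy condition. For equation of state I, genuine nonlinearity makes the Lax condition sufficient; for equations of state II and III the loss of convexity of $p(\tau)$ forces composite waves, and here the correct jumps are fixed by the Oleinik–Liu chord condition together with the double-tangent (equal-areas) construction recorded in the Remark, while assumption (A2) guarantees that the relevant chord from the plateau lands on an admissible portion of the curve. Existence is then obtained by a shooting argument: I would use the shock location $s$ (equivalently the core density $\rho_\ast$) as a continuously varying parameter, show that the value of $u$ produced at $\xi=0$ depends monotonically and continuously on it, and invoke the intermediate value theorem to hit $u(0)=0$ exactly while the outer end still matches $(u_0,\rho_0)$.

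The hard part will be the simultaneous control of the sonic singularity and the admissible shock insertion for the non-convex states II and III: one must show that the inward integral curve actually reaches a sonic line transversally, that an entropy-admissible shock (possibly a composite phase-transition discontinuity) connects it to the inner region without violating \eqref{IBV}, and that the whole one-parameter family of candidate solutions varies monotonically enough for the shooting argument to close. Controlling the degenerate behavior at vacuum and on the phase-transition plateau, where the coefficients of the ODE system lose smoothness, is the technical heart of the argument.
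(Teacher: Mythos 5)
Your reduction to the ODE system and your overall strategy (integrate inward from $\xi=+\infty$, insert an entropy-admissible shock where the smooth curve fails, close with a constant core, and shoot for the boundary condition) coincide with the paper's. But the plan as written has three concrete gaps, and they are exactly where the work of the paper lies. First, your shooting argument presupposes that the post-shock velocity produced at the center depends \emph{continuously} (and monotonically) on the shock location. For equations of state II and III this is false: as the candidate shock location $s$ varies, the admissible branch of the Hugoniot locus selected by Liu's condition (E) jumps when the chord from $(\tau_1,p_1)$ becomes tangent to the graph of $p$, so the back state $u_2(s)$ determined by (\ref{backside}) is only \emph{piecewise} continuous and the intermediate value theorem cannot be applied directly. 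The paper's resolution is to locate the jump point $s^{*}$ where $u_2^{+}(s^{*})>0>u_2^{-}(s^{*})$, restart the ODE from the lower branch, and insert a \emph{second} compression shock further in; this double-shock structure (Figures \ref{Figu2} and \ref{fig10}) cannot be produced by a one-parameter monotone shooting and is missing from your plan.

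Second, for $u_0>0$ you propose to ``follow the smooth curve toward the origin,'' but the inward integral curve need not reach $\xi=0$: for the non-convex states it can terminate on the sonic line $\xi=u+c$ at a point with $u>0$, and there the paper proves (Lemma \ref{100502}) that a compression shock \emph{cannot} be continued --- the post-shock trajectory is confined between the two sonic lines and must exit through the upper one, after which no admissible continuation exists --- so one is forced to use an admissible \emph{rarefaction} shock, whose existence rests on the tangent construction $f(\tau_1)$ and Corollaries \ref{cor3} and \ref{cor5}; your appeal to ``composite waves'' does not identify this mechanism. Third, even for the convex state I with $u_0>0$ the solution is not always obtained by hitting $u(0)=0$: for large $u_0$ the curve ends in vacuum ($\rho\to0$ at a finite $\xi_{*}>0$ with $u=\xi_{*}$ there), which satisfies $(\rho u)(0,t)=0$ without $u$ vanishing, and the borderline global smooth solution is produced not by shooting in $s$ but by showing that the sets of data leading to the vacuum ending and to the quiet-core ending are both open (Lemmas \ref{lemcase31} and \ref{lemcase32}) and invoking connectedness in $u_0$. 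Your insistence on the pointwise boundary value $u(0)=0$ excludes the vacuum solutions, which are the correct ones for large positive $u_0$.
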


We use the assumption of self-similarity to reduce the spherically symmetric Euler equations (\ref{AE}) to a system of nonlinear ordinary equations, from which we obtain detailed structures of solutions of (\ref{AE}), (\ref{IBV}) besides their existence. There are many differences between our results and the previous results for polytropic gases.
First, system (\ref{AE})
cannot by self-similar transformation be reduced to an autonomous system of ordinary differential equations
for general equations of state, so that the method in \cite{CF,Zheng1} can not be used in here.
Second, the solution for (\ref{AE}), (\ref{IBV}) for polytropic gases is continuous as $u_0>0$, whereas the solution for nonconvex equations of state may be discontinuous as $u_0>0$.
Third, the solution for (\ref{AE}), (\ref{IBV}) for polytropic gases contains only one shock as $u_0<0$, whereas the solution for nonconvex equations of state
 may contain two or even more shocks as $u_0<0$.




\section{\bf Preliminaries}

\subsection{Ordinary equations}
  By self-similar transformation, system (\ref{AE}) can be written as
$$
\left\{
 \begin{aligned}
&-\xi\frac{{\rm d} \rho}{{\rm d} \xi}+\frac{{\rm d} (\rho u) }{{\rm d} \xi}+\frac{2\rho u}{\xi}=0,\\
  &-\xi\frac{{\rm d} u}{{\rm d} \xi}+ u\frac{{\rm d} u}{{\rm d} \xi}+\frac{1}{\rho}\frac{{\rm d} p}{{\rm d} \xi}=0.
  \end{aligned}
\right.
$$
Hence,
\begin{equation}\label{ODE1}
\left\{
 \begin{aligned}
   &\frac{{\rm d} u}{{\rm d} \xi}=-\frac{2p'(\rho) u}{\xi
   \big[p'(\rho)-(u-\xi)^2\big]}, \\
    &\frac{{\rm d} \rho}{{\rm d} \xi}=\frac{2\rho u(u-\xi)}{\xi
   \big[p'(\rho)-(u-\xi)^2\big]}, \\
  \end{aligned}
\right.
\end{equation}

Let $s=1/\xi$. Then, system (\ref{ODE1}) can be changed into
\begin{equation}\label{ODE2}
\left\{
  \begin{aligned}
   &\frac{{\rm d} u}{{\rm d} s}=\frac{2p'(\rho) us}{s^2 p'(\rho)-(1-us)^2},  \\
     &\frac{{\rm d} \rho}{{\rm d} s}=\frac{2\rho u(1-us)}{s^2 p'(\rho)-(1-us)^2}.
  \end{aligned}
\right.
\end{equation}
The initial condition $(u, \rho)(x, 0)=(u_0, \rho_0)$ can be changed into
\begin{equation}\label{ID1}
(u, \rho)\mid_{s=0}~=~(u_0, \rho_0).
\end{equation}
The initial value problem (\ref{ODE2}), (\ref{ID1}) is a classically well-posed problem which has a unique local solution for any $(u_0, \rho_0)$.
Throughout the paper, we denote by $(u_1, \rho_1)(s)$ the (local) classical solution of the initial value problem (\ref{ODE2}), (\ref{ID1}).

In view of the denominators of the right parts of (\ref{ODE2}), we define
\begin{equation}\label{h}
h(\rho_1(s), s):=~\frac{1}{s}-\sqrt{p'\big(\rho_1(s)\big)}.
\end{equation}
Then we have the following properties:
\begin{itemize}
  \item if $u_1(s)<h(\rho_1(s), s)$ then $s^2 p'(\rho_1)-(1-u_1s)^2<0$;
  \item if $u_1(s)=h(\rho_1(s), s)$ then $s^2 p'(\rho_1)-(1-u_1s)^2=0$;
  \item if $h(\rho_1(s), s)<u_1(s)<\frac{1}{s}+\sqrt{p'\big(\rho_1(s)\big)}$ then $s^2 p'(\rho_1)-(1-u_1s)^2>0$.
\end{itemize}

\subsection{Shock waves}
It is known that a weak solution $(u, \rho)$ to (\ref{AE}) satisfies the Rankine-Hugoniot condition across any discontinuity at $(x, t)$:
\begin{equation}
\frac{\rho_1 u_1-\rho_2 u_2}{\rho_1-\rho_2}~=~\frac{\rho_1 u_1^2+p_1-\rho_2 u_2^2-p_2}{\rho_1 u_1-\rho_2 u_2}~=~\sigma
\end{equation}
where $(u_1, \rho_1)=(u, r)(x+0, t)$, $(u_2, \rho_2)=(u, r)(x-0, t)$, and $\sigma$ is the speed of discontinuity. For any $(u_*, \rho_*)$, we let the shock set through $(u_*, \rho_*)$
be the set of points $(u, \rho)$ satisfying the Rankine-Hugoniot condition
$$
\frac{\rho_* u_*-\rho u}{\rho_*-\rho}~=~\frac{\rho_* u_*^2+p_*-\rho u^2-p}{\rho_* u_*-\rho u}~=~\sigma(u_*, \rho_*; u, \rho).
$$
We need to use the entropy condition (E) given by Liu \cite{Liu}.
\begin{defn}
A discontinuity between two states $(u_1, \rho_1)$ and $(u_2, \rho_2)$ satisfies the entropy condition (E) if
\begin{equation}\label{EEntropy}
\sigma(u_1, \rho_1; u_2, \rho_2)\geq \sigma(u_1, \rho_1; u, \rho)
\end{equation}
for all $(u, \rho)$ on the shock set through $(u_1, \rho_1)$ between $(u_1, \rho_1)$ and $(u_2, \rho_2)$.
A shock which satisfies the entropy condition (E) will be called an admissible shock.
\end{defn}

In this paper, we are only concerned with forward shock waves.
So, we give a geometric interpretation of entropy condition (E) for forward shock waves.
\begin{lem}\label{lem21}
A forward shock between two states $(u_1, \tau_1)$ and $(u_2, \tau_2)$ satisfies the entropy condition (E) if and only if
\begin{equation}
\sqrt{-\frac{p_2-p_1}{\tau_2-\tau_1}}~\geq~ \sqrt{-\frac{p-p_1}{\tau-\tau_1}}
\end{equation}
for all $\tau\in\big(\min\{\tau_1, \tau_2\}, \max\{\tau_1, \tau_2\}\big)$. Here, ``1" denotes the fluid in front of the shock, ``2" denotes the fluid behind the shock.
\end{lem}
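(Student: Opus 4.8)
The plan is to make Liu's entropy condition (E), stated in \eqref{EEntropy} as an inequality between shock speeds $\sigma$, explicit along the shock set through $(u_1,\tau_1)$, and then to read off the claimed chord-slope inequality from the monotone dependence of $\sigma$ on the Rankine--Hugoniot mass flux. First I would introduce the mass flux across the discontinuity,
\begin{equation}
j~:=~\rho_1(u_1-\sigma)~=~\rho(u-\sigma),
\end{equation}
which is single-valued by the first Rankine--Hugoniot relation. Writing $u_1-\sigma=j\tau_1$ and $u-\sigma=j\tau$ and eliminating $\sigma$ gives $u-u_1=j(\tau-\tau_1)$; substituting this into the momentum jump $j(u-u_1)=-(p-p_1)$ yields
\begin{equation}
j^2~=~-\frac{p-p_1}{\tau-\tau_1},\qquad \sigma(u_1,\rho_1;u,\rho)~=~u_1-j\tau_1.
\end{equation}
Thus along the shock set through $(u_1,\tau_1)$ the speed $\sigma$ is an affine function of $j$, while $|j|$ equals the chord slope $\sqrt{-(p-p_1)/(\tau-\tau_1)}$ appearing in the statement.

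Second, I would fix the branch corresponding to a forward shock. A forward shock propagates faster than the fluid ahead of it, i.e. $\sigma>u_1$, which by $\sigma=u_1-j\tau_1$ and $\tau_1>0$ forces $j<0$; hence $|j|=-j$ and
\begin{equation}
\sigma(u_1,\rho_1;u,\rho)~=~u_1+\tau_1\sqrt{-\frac{p-p_1}{\tau-\tau_1}}.
\end{equation}
Because $\tau_1>0$ is fixed, $\sigma(u_1,\rho_1;u,\rho)$ is a strictly increasing function of the chord slope $\sqrt{-(p-p_1)/(\tau-\tau_1)}$. Consequently the entropy inequality $\sigma(u_1,\rho_1;u_2,\rho_2)\ge\sigma(u_1,\rho_1;u,\rho)$ holds for a given intermediate state if and only if $\sqrt{-(p_2-p_1)/(\tau_2-\tau_1)}\ge\sqrt{-(p-p_1)/(\tau-\tau_1)}$, which is exactly the claimed inequality. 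This establishes both directions of the equivalence at the level of a single comparison state. (The opposite sign convention would reverse the inequality, so matching the direction in the statement confirms that $j<0$ is the correct branch for a forward shock.)

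Finally, I would translate ``for all $(u,\rho)$ on the shock set between $(u_1,\rho_1)$ and $(u_2,\rho_2)$'' into ``for all $\tau$ in the open interval between $\tau_1$ and $\tau_2$.'' The key observation is that every state on the shock set carries its equilibrium pressure $p=p(\tau)$ through the equation of state, so the projection of the shock set to the $(\tau,p)$-plane lies on the graph of the equation of state; along the forward branch the velocity $u=u_1+j(\tau-\tau_1)$ is then determined uniquely by $\tau$, so $\tau$ is a global parameter and the intermediate states are exactly those with $\tau$ strictly between $\tau_1$ and $\tau_2$. Applying the single-state equivalence of the previous step to each such $\tau$ gives the lemma. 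The step I expect to require the most care is this last identification, together with fixing the sign of $j$: for the nonconvex equations of state II and III the map $\tau\mapsto p(\tau)$ need not be convex, so one must verify that the admissibility constraint $-(p(\tau)-p_1)/(\tau-\tau_1)\ge 0$ holds along the whole interval, so that the radicands are nonnegative and each intermediate $\tau$ genuinely lies on the forward shock set, rather than relying on monotonicity of the pressure that is available in the polytropic case but not here.
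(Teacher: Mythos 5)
Your proposal is correct and follows essentially the same route as the paper: both derive from the Rankine--Hugoniot conditions the formula $\sigma(u_1,\rho_1;u,\rho)=u_1+\tau_1\sqrt{-(p-p_1)/(\tau-\tau_1)}$ along the forward shock set (the paper by direct elimination, you via the mass flux $j$, which is the same algebra) and then read off the equivalence from monotonicity of $\sigma$ in the chord slope since $\tau_1>0$ is fixed. Your closing remark about nonnegativity of the radicand is harmless but automatic here, since $p'(\tau)\le 0$ for all three equations of state.
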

\begin{proof}
From the Rankine-Hugoniot conditions for forward shock waves we have
\begin{equation}\label{RH}
\left\{
  \begin{array}{ll}
    \rho_1(u_1-\sigma)=\rho_2(u_2-\sigma)<0,  \\[4pt]
    \rho_1(u_1-\sigma)^2+p_1=\rho_2(u_2-\sigma)^2+p_2.
  \end{array}
\right.
\end{equation}


From (\ref{RH}) we get
$$
\frac{\rho_2^2(u_2-\sigma)^2}{\rho_1}+p_1=\rho_2(u_2-\sigma)^2+p_2,
$$
and consequently
$$
(u_2-\sigma)^2=\frac{p_2-p_1}{\rho_2-\rho_1}\cdot\frac{\rho_1}{\rho_2}=-\tau_2^2\frac{p_2-p_1}{\tau_2-\tau_1}.
$$
Thus, we have
\begin{equation}\label{1}
\sigma(u_1, \rho_1; u_2, \rho_2)=u_2+\tau_2\sqrt{-\frac{p_2-p_1}{\tau_2-\tau_1}}.
\end{equation}

Similarly, we have
\begin{equation}\label{2}
\sigma(u_1, \rho_1; u_2, \rho_2)=u_1+\tau_1\sqrt{-\frac{p_2-p_1}{\tau_2-\tau_1}}.
\end{equation}
Thus, for all $(u, \rho)$ on the forward shock set through $(u_1, \rho_1)$ we have
\begin{equation}\label{102503}
\sigma(u_1, \rho_1; u, \rho)=u_1+\tau_1\sqrt{-\frac{p-p_1}{\tau-\tau_1}}.
\end{equation}
Then by (\ref{EEntropy}) we get this lemma.
\end{proof}

We define
$$
\phi(\tau; u_1, \tau_1):=u_1+(\tau_1-\tau)\sqrt{-\frac{p-p_1}{\tau-\tau_1}}.
$$
Then by (\ref{1}), (\ref{2}), and Lemma \ref{lem21}, we have the following corollaries about forward admissible shocks.
\begin{cor}\label{cor1}
For equation of state I, the set $\mathcal{S}_{c}$ of the sates which can be connected to $(u_1, \tau_1)$ by a forward admissible compression shock on the left is given by:
$$
\mathcal{S}_{c}=\{(u, \tau)\mid u=\phi(\tau; u_1, \tau_1), \tau<\tau_1\}.
$$
\end{cor}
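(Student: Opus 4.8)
The plan is to first pin down the algebraic shape of the forward shock set through $(u_1,\tau_1)$ using the Rankine--Hugoniot relations, and then to carve out the admissible compression branch by exploiting the monotonicity forced by convexity of $p$. First I would record that every state $(u,\tau)$ lying on the forward shock set through $(u_1,\tau_1)$ satisfies $u=\phi(\tau;u_1,\tau_1)$. Indeed, specializing (\ref{1}) and (\ref{102503}) to the pair $(u_1,\tau_1)$ in front and $(u,\tau)$ behind gives two expressions for the shock speed,
$$
\sigma = u_1+\tau_1\sqrt{-\frac{p-p_1}{\tau-\tau_1}} = u+\tau\sqrt{-\frac{p-p_1}{\tau-\tau_1}},
$$
and subtracting eliminates $\sigma$ to yield exactly $u=u_1+(\tau_1-\tau)\sqrt{-\frac{p-p_1}{\tau-\tau_1}}=\phi(\tau;u_1,\tau_1)$. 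Thus the whole forward shock set is the graph $\{u=\phi(\tau;u_1,\tau_1)\}$, and it remains only to decide which values of $\tau$ give admissible compression shocks. A compression shock is one across which the density increases from front to back, i.e. $\tau<\tau_1$, so the claim is that admissibility holds precisely on this branch.

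Next I would reduce the entropy condition to a monotonicity statement. Set $m(\tau):=-\dfrac{p-p_1}{\tau-\tau_1}$, the negative of the chord slope of the graph of $p$ measured from $(\tau_1,p_1)$; since $p'<0$ we have $m(\tau)>0$ for every $\tau\neq\tau_1$, so the square roots appearing in Lemma \ref{lem21} are well defined. For equation of state I the pressure $p$ is strictly convex, hence the chord slope $\tau\mapsto\frac{p-p_1}{\tau-\tau_1}$ is strictly increasing, and therefore $m$ is strictly decreasing in $\tau$. By Lemma \ref{lem21}, the shock from $(u_1,\tau_1)$ to $(u,\tau)$ is admissible if and only if $m(\tau)\geq m(\bar\tau)$ for every $\bar\tau$ strictly between $\tau_1$ and $\tau$.

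Now the two cases fall out from the monotonicity of $m$. If $\tau<\tau_1$ (a compression shock), then every intermediate $\bar\tau$ satisfies $\bar\tau>\tau$, whence $m(\bar\tau)<m(\tau)$ and the entropy inequality holds, so every compression shock is admissible. If instead $\tau>\tau_1$ (an expansion shock), then every intermediate $\bar\tau$ satisfies $\bar\tau<\tau$, whence $m(\bar\tau)>m(\tau)$ and the inequality fails. Combining this with the graph description from the first step gives $\mathcal{S}_{c}=\{(u,\tau)\mid u=\phi(\tau;u_1,\tau_1),\ \tau<\tau_1\}$, as claimed. The only genuine content of the argument is the convexity-to-secant-slope-monotonicity step; once $m$ is known to be monotone, the rest is bookkeeping. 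I expect the real difficulty to surface only for equations of state II and III, where $m$ is no longer monotone, so that this clean single-branch description must be replaced by a more delicate analysis of where the chord slope attains its extrema.
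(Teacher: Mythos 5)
Your proof is correct: the reduction of the Liu condition to the chord-slope monotonicity of $\tau\mapsto\frac{p(\tau)-p(\tau_1)}{\tau-\tau_1}$ forced by $p''>0$ is exactly the content Lemma \ref{lem21} is set up to deliver. The paper itself omits the proof of Corollary \ref{cor1} as ``obvious'' (deferring to \cite{GN,LeFloch}), and your write-up supplies precisely the intended argument, so there is nothing to compare beyond noting that you have filled in the omitted details faithfully.
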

\begin{figure}[htbp]
\begin{center}
\includegraphics[scale=0.25]{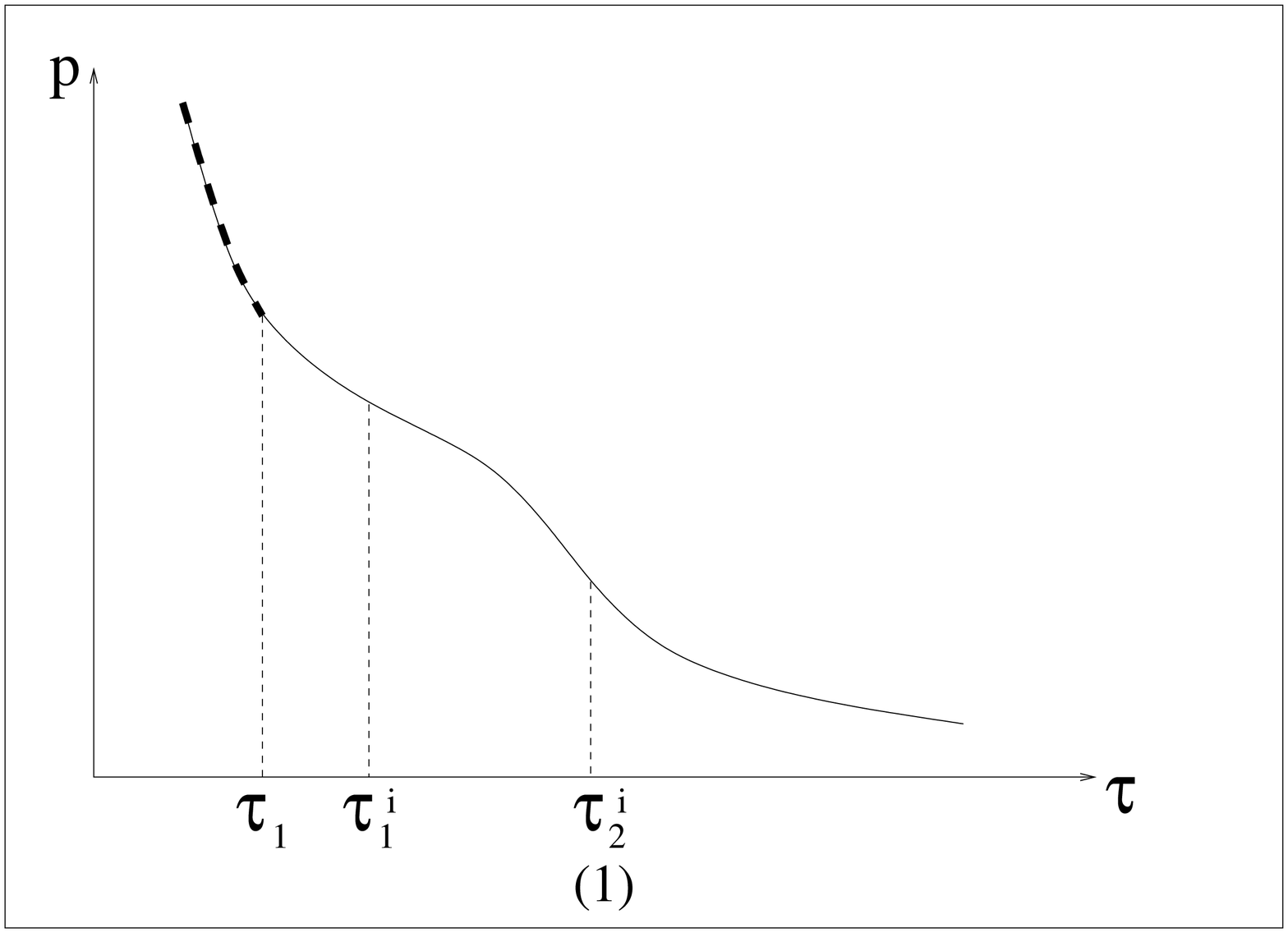}~\includegraphics[scale=0.25]{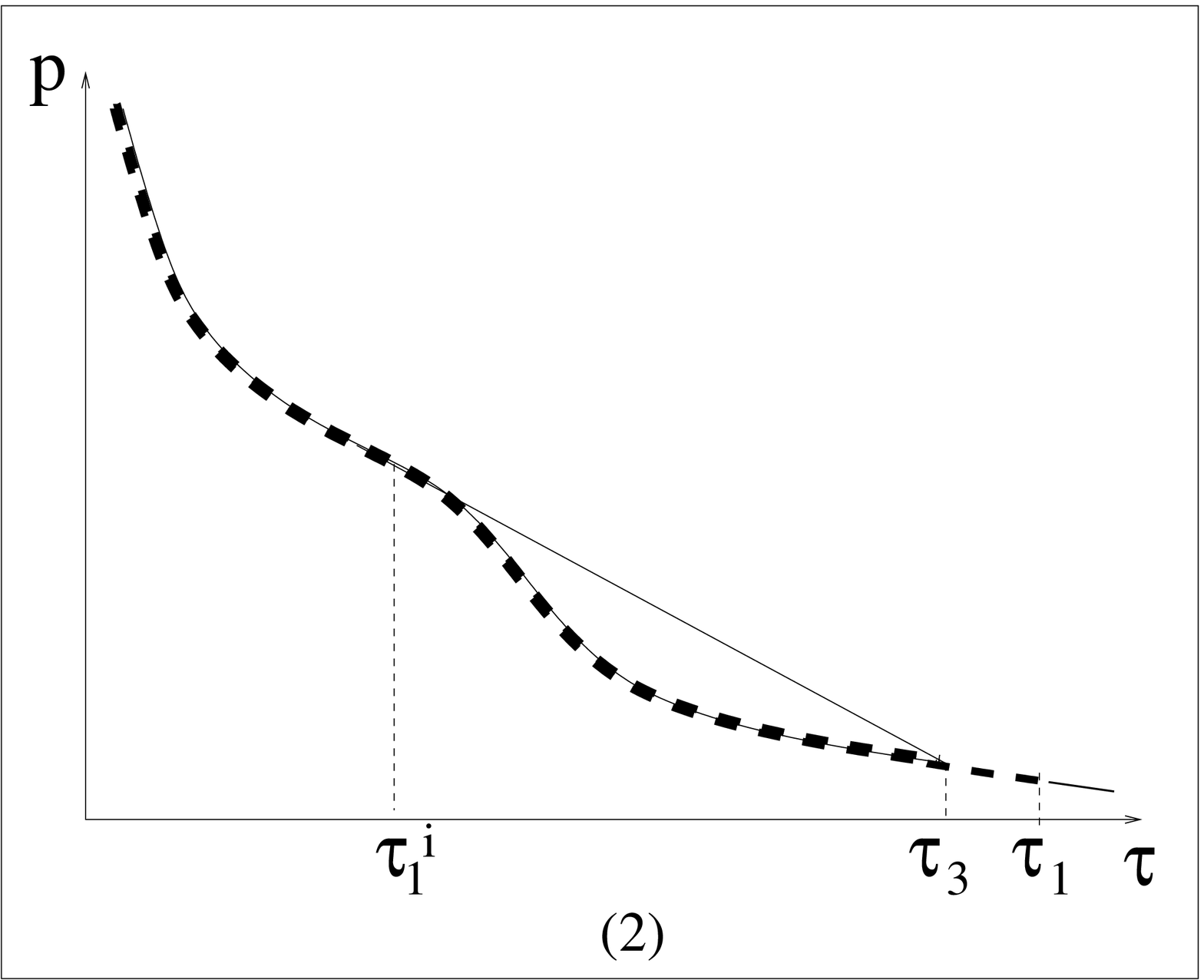}
~\includegraphics[scale=0.25]{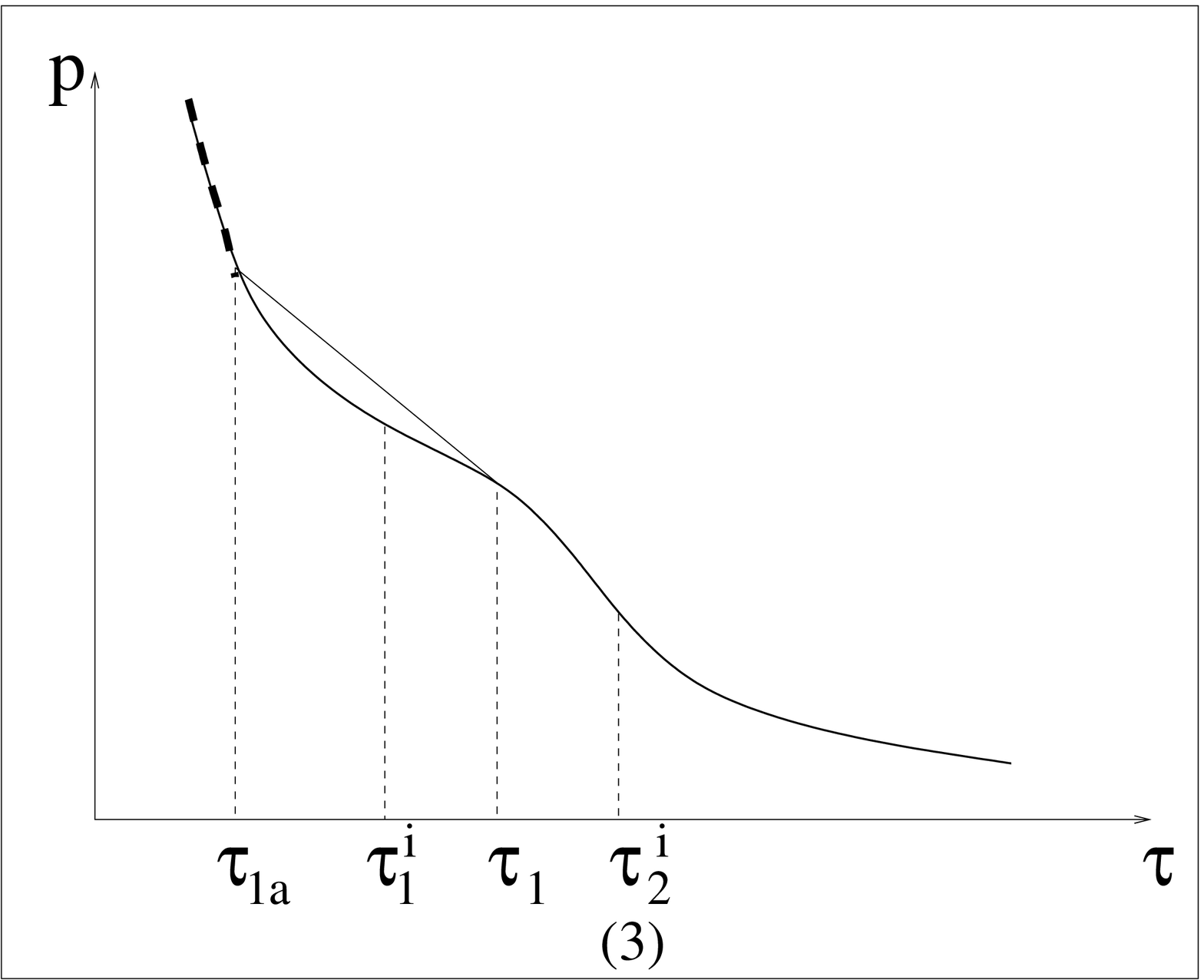}\\ \vskip 4pt
\includegraphics[scale=0.26]{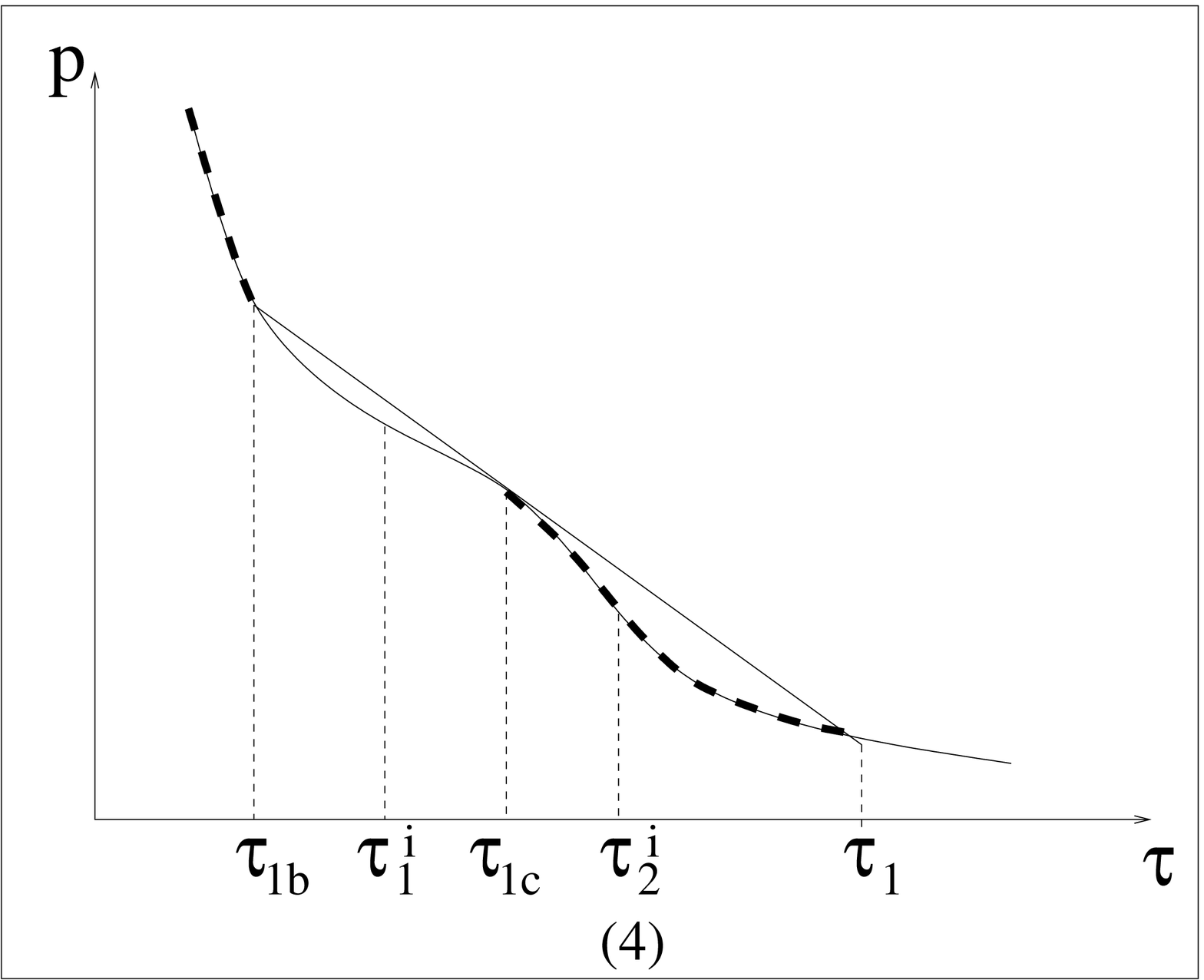}~\includegraphics[scale=0.26]{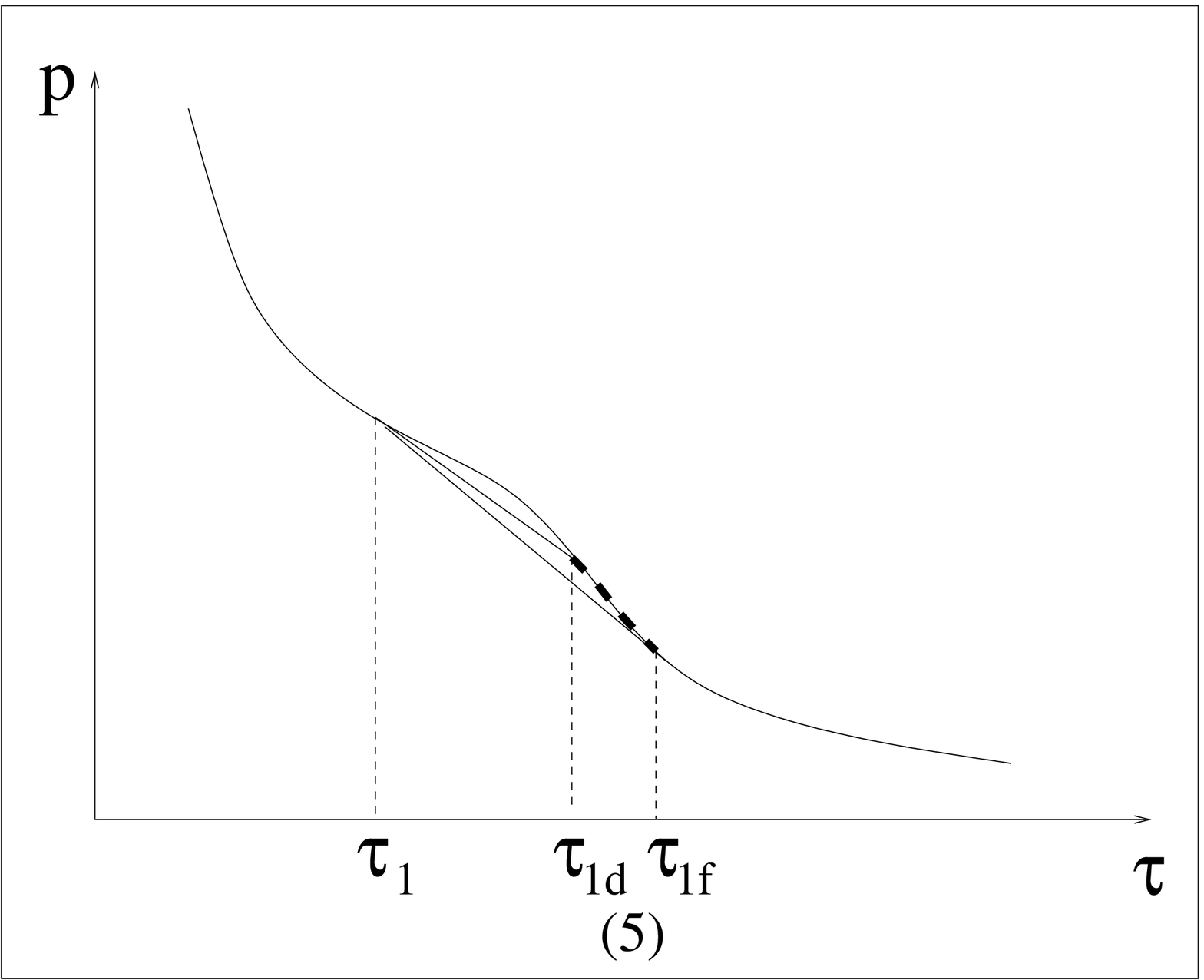}~\includegraphics[scale=0.26]{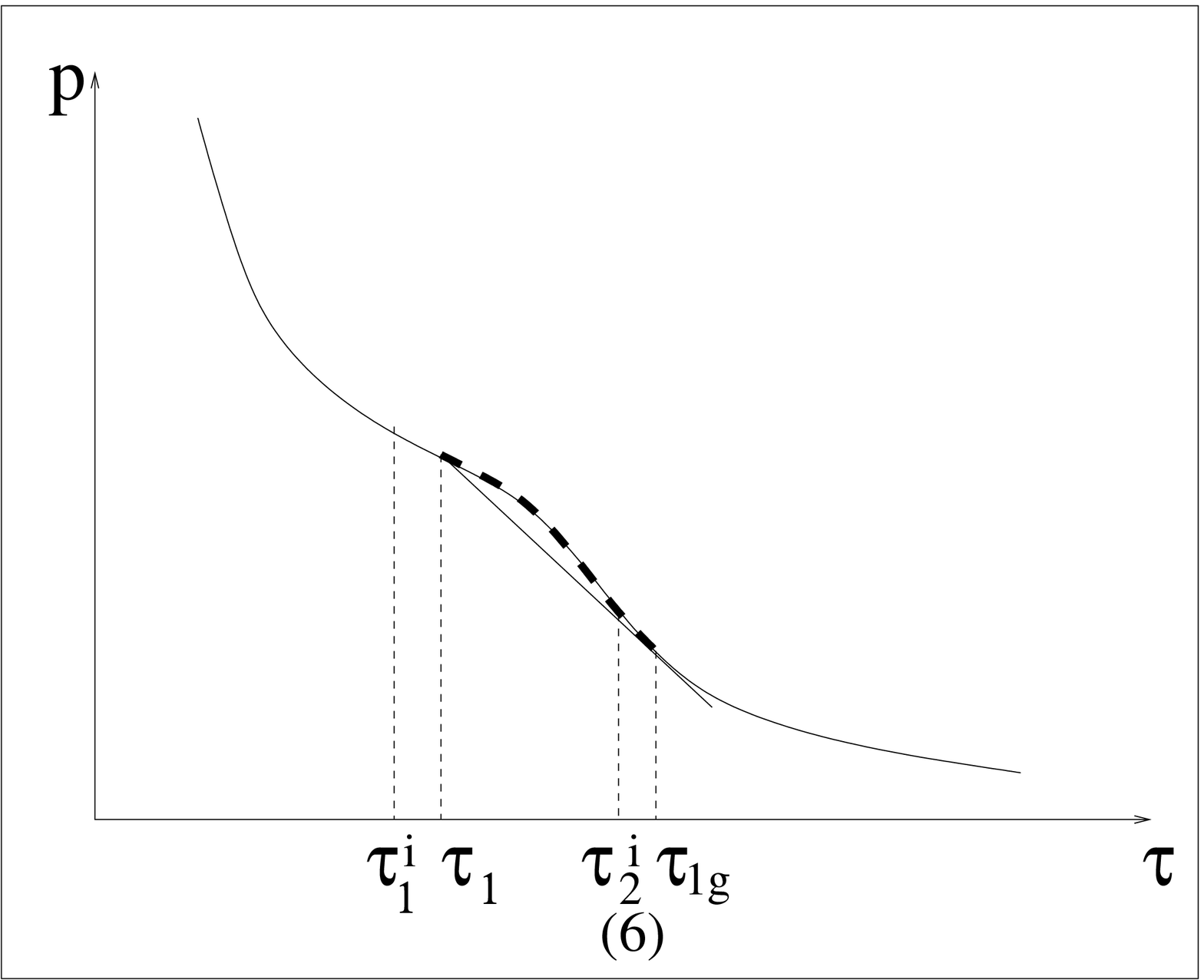}
\caption{\footnotesize Admissible shocks for equation of state II.}
\label{Figure2}
\end{center}
\end{figure}

\begin{cor}\label{cor2}
For equation of state II, the set $\mathcal{S}_{c}$ of the sates which can be connected to $(u_1, \tau_1)$ by a forward admissible compression shock on the left is given by:
\begin{itemize}
  \item If $\tau_1\in (0, \tau_1^i]\cup (\tau_3, +\infty)$, then $\mathcal{S}_{c}=\{(u, \tau)\mid u=\phi(\tau; u_1, \tau_1), \tau<\tau_1\}$, where $\tau_3$ is determined by $$\frac{p(\tau_{3})-p(\tau_{1}^{i})}{\tau_{3}-\tau_{1}^{i}}=p'(\tau_1^{i});$$ see Figure \ref{Figure2}(1--2).
  \item If $\tau_1\in (\tau_1^i, \tau_2^i]$, then $\mathcal{S}_{c}=\{(u, \tau)\mid u=\phi(\tau, u_1, \tau_1), \tau<\tau_{1a}\}$, where $\tau_{1a}$ is determined by $$\frac{p(\tau_{1a})-p(\tau_1)}{\tau_{1a}-\tau_1}=p'(\tau_1)\quad \mbox{and}\quad \tau_{1a}<\tau_1;$$ see Figure \ref{Figure2}(3).
  \item If $\tau_1\in (\tau_2^i, \tau_3)$, then $\mathcal{S}_{c}=\{(u, \tau)\mid u=\phi(\tau; u_1, \tau_1), \tau<\tau_{1b}~\mbox{and}~\tau_{1c}<\tau<\tau_1\}$, where $\tau_{1b}$ and $\tau_{1c}$ are determined by
$$\frac{p(\tau_{1b})-p(\tau_1)}{\tau_{1b}-\tau_1}=\frac{p(\tau_{1c})-p(\tau_1)}{\tau_{1c}-\tau_1}=p'(\tau_{1c})\quad \mbox{and}\quad \tau_{1b}<\tau_1^{i}<\tau_{1a}<\tau_2^{i};$$ see Figure \ref{Figure2}(4).
\end{itemize}
\end{cor}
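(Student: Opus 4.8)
The plan is to translate the entropy condition (E) into a statement about the \emph{secant-slope function} of the pressure curve and then to read off the admissible set in each of the four cases directly from the convex--concave--convex geometry of $p$.

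First I would recast Lemma~\ref{lem21}. For a forward compression shock the back state satisfies $\tau<\tau_1$, and by \eqref{102503} the speed $\sigma(u_1,\rho_1;u,\rho)=u_1+\tau_1\sqrt{-\frac{p-p_1}{\tau-\tau_1}}$ is a strictly increasing function of the slope magnitude $-m(\tau)$, where
\[
m(\tau):=\frac{p(\tau)-p_1}{\tau-\tau_1}.
\]
Hence (E) — that $\sigma$ to the endpoint dominates $\sigma$ to every intermediate state — is equivalent to $m(\tau_2)\le m(\tau)$ for all $\tau\in(\tau_2,\tau_1)$; geometrically the chord joining $(\tau_1,p_1)$ to $(\tau_2,p(\tau_2))$ lies weakly above the graph of $p$ over $[\tau_2,\tau_1]$. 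Thus the admissible back states are exactly those $\tau_2$ at which the infimum of $m$ on $[\tau_2,\tau_1)$ is attained at the left endpoint $\tau_2$.

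Second, I would control the monotonicity of $m$ through the auxiliary function
\[
N(\tau):=p'(\tau)(\tau-\tau_1)-\big(p(\tau)-p_1\big),
\]
since $m'(\tau)$ has the same sign as $N(\tau)$. Because $N(\tau_1)=0$ and
\[
N'(\tau)=p''(\tau)(\tau-\tau_1),
\]
with $\tau-\tau_1<0$, the sign of $N'$ is opposite to that of $p''$, so the increase/decrease of $m$ is governed \emph{directly} by the inflection points $\tau_1^i,\tau_2^i$. The interior zeros of $N$ are precisely the genuine tangencies $m(\tau)=p'(\tau)$, which are the local extrema of $m$. With this tool I would dispatch the four configurations. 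If $\tau_1\in(0,\tau_1^i]$ the graph is convex on all of $(0,\tau_1)$, so $N>0$ and $m$ decreases monotonically as $\tau$ decreases, and every $\tau_2<\tau_1$ is admissible. If $\tau_1\in(\tau_3,+\infty)$ I would use the defining relation $\frac{p(\tau_3)-p(\tau_1^i)}{\tau_3-\tau_1^i}=p'(\tau_1^i)$ — which forces $N(\tau_1^i)=0$ exactly when $\tau_1=\tau_3$ — to show $N>0$ throughout $(0,\tau_1)$ once $\tau_1>\tau_3$, again giving the full set. If $\tau_1\in(\tau_1^i,\tau_2^i]$ then $p''(\tau_1)<0$, so $N<0$ just left of $\tau_1$ and $m$ first \emph{increases}, rises to a local maximum at the interior zero of $N$ in the convex branch, then decreases and recrosses its endpoint level $p'(\tau_1)$ exactly at $\tau_{1a}$, the second intersection of the tangent line at $\tau_1$ with the graph; points of $(\tau_{1a},\tau_1)$ are dominated by states nearer $\tau_1$ and excluded, whence $\mathcal S_c=\{\tau<\tau_{1a}\}$. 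Finally, if $\tau_1\in(\tau_2^i,\tau_3)$ then $m$ decreases to a local minimum at the genuine tangency $\tau_{1c}\in(\tau_1^i,\tau_2^i)$, rises to a local maximum at the next zero of $N$ in the first convex branch, then decreases and recrosses the level $m=p'(\tau_{1c})$ at the second intersection $\tau_{1b}<\tau_1^i$ of that tangent line with the graph; the running-infimum description then yields precisely the two branches $\tau_{1c}<\tau<\tau_1$ and $\tau<\tau_{1b}$, with the gap $(\tau_{1b},\tau_{1c})$ excluded.

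I expect the main obstacle to be the global bookkeeping of $N$: proving that in each case $N$ has \emph{exactly} the sign changes claimed, so that $\tau_{1a}$, $\tau_{1c}$, $\tau_{1b}$ exist, are unique, and obey $\tau_{1b}<\tau_1^i<\tau_{1c}<\tau_2^i$, and that no spurious tangency is produced by the behaviour of $p$ as $\tau\to0^+$ or $\tau\to+\infty$. This is where one must combine the single-signed identity $N'=p''(\tau)(\tau-\tau_1)$ with the two inflection points, the threshold relation defining $\tau_3$, the bitangent of the Remark, and the decay hypothesis (A1) governing the far field; once the sign pattern of $N$ is pinned down, the remaining points (the boundary inclusions and the strictness of the inequalities across the excluded gaps) are routine.
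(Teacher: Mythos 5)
Your proposal is correct, but it cannot be ``the same approach as the paper'' for the simple reason that the paper gives no proof at all: immediately after Corollary \ref{cor5} the authors write that Corollaries \ref{cor1}--\ref{cor5} are obvious and defer to the references [GN, LeFloch]. What you have written is precisely the argument those references (and the omitted proof) rest on, namely the Oleinik/Wendroff chord condition. Your reduction is sound: by (\ref{102503}) the speed $\sigma(u_1,\rho_1;u,\rho)=u_1+\tau_1\sqrt{-m(\tau)}$ is increasing in $-m(\tau)$, so Lemma \ref{lem21} becomes ``$m(\tau_2)=\min_{[\tau_2,\tau_1)}m$,'' and the sign identity $N'(\tau)=p''(\tau)(\tau-\tau_1)$ with $N(\tau_1)=0$ correctly converts the convex--concave--convex shape of $p$ into the monotonicity pattern of $m$. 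I checked the four configurations against this: in particular your observation that $N(\tau_1^i)\ge 0$ is exactly the condition that $(\tau_1,p_1)$ lies above the tangent at $\tau_1^i$, which by the definition of $\tau_3$ holds iff $\tau_1\ge\tau_3$, is the right way to see why the dichotomy between the first and third bullets occurs at $\tau_3$; and your running-infimum description correctly produces the excluded gap $(\tau_{1b},\tau_{1c})$ in the third case. (You have also implicitly corrected a typo in the statement: the chain in the third bullet should read $\tau_{1b}<\tau_1^i<\tau_{1c}<\tau_2^i$, not $\tau_{1a}$.)

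Two caveats, neither fatal. First, the obstacle you flag at the end is real and is not discharged by your sketch: the existence and uniqueness of the interior zeros of $N$ (hence of $\tau_{1a}$, $\tau_{1b}$, $\tau_{1c}$) requires knowing that $m(\tau)\to-\infty$, equivalently $p(\tau)\to+\infty$, as $\tau\to0^+$; this follows from the physical shape hypotheses on $p$ but is nowhere stated explicitly in the paper, and (A1) concerns the opposite limit $\rho\to0$. Since the corollary's statement presupposes these points exist, this is a gap in the paper's hypotheses as much as in your argument, but a complete write-up should say where it comes from. Second, with Liu's condition (\ref{EEntropy}) stated with a weak inequality, the boundary states $\tau_{1a}$ and $\tau_{1b}$ are themselves admissible (left-sonic shocks), so the strict inequalities $\tau<\tau_{1a}$, $\tau<\tau_{1b}$ in the corollary are a convention rather than a consequence; your argument actually shows $m(\tau_{1a})=p'(\tau_1)\le m(\tau)$ for all intermediate $\tau$, so be explicit about which convention you adopt at these endpoints.
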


\begin{cor}\label{cor3}
For equation of state II, the set $\mathcal{S}_{r}$ of the sates which can be connected to $(u_1, \tau_1)$ by a forward admissible rarefaction shock on the left is given by:
\begin{itemize}
  \item If $\tau_1\in (\hat{\tau}_1, \tau_1^i]$, then $\mathcal{S}_{r}=\{(u, \tau)\mid u=\phi(\tau; u_1, \tau_1), \tau_{1d}<\tau<\tau_{1f}\}$, where $\tau_{1d}$ and $\tau_{1f}$ are determined by
$$\frac{p(\tau_{1d})-p(\tau_1)}{\tau_{1d}-\tau_1}=p'(\tau_{1}), \quad
\frac{p(\tau_{1f})-p(\tau_1)}{\tau_{1f}-\tau_1}=p'(\tau_{1f}),
\quad \mbox{and}\quad \tau_1^{i}<\tau_{1d}<\tau_{1f};$$ see Figure \ref{Figure2}(5).
  \item If $\tau_1\in (\tau_1^i, \tau_2^i)$, then $\mathcal{S}_{r}=\{(u, \tau)\mid u=\phi(\tau; u_1, \tau_1), \tau_{1}<\tau<\tau_{1g}\}$, where $\tau_{1g}$ is determined by $$\frac{p(\tau_{1g})-p(\tau_1)}{\tau_{1g}-\tau_1}=p'(\tau_{1g})\quad \mbox{and}\quad \tau_{1g}>\tau_{2}^{i};$$ see Figure \ref{Figure2}(6).
\end{itemize}
\end{cor}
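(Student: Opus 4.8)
The plan is to turn the Liu entropy condition, made geometric in Lemma~\ref{lem21}, into a monotonicity statement for a single auxiliary function, and then to read off the admissible range of specific volumes from the convex--concave--convex shape of $p$. For a forward rarefaction shock the state behind, $(u,\tau)$, has $\tau>\tau_1$, so Lemma~\ref{lem21} says the shock is admissible precisely when
$$
g(\tau)~\geq~g(\bar\tau)\quad\text{for all }\bar\tau\in(\tau_1,\tau),\qquad g(\tau):=-\frac{p(\tau)-p(\tau_1)}{\tau-\tau_1},
$$
that is, when $g$ attains its maximum over $[\tau_1,\tau]$ at the right endpoint. Since $g(\tau)$ is the negated slope of the chord joining $(\tau_1,p_1)$ to $(\tau,p(\tau))$, the whole argument reduces to chord/tangent geometry of the graph of $p$, and the parametrization $u=\phi(\tau;u_1,\tau_1)$ is inherited automatically from the shock set.

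First I would record $g(\tau_1^+)=-p'(\tau_1)$ and compute
$$
g'(\tau)=\frac{M(\tau)}{(\tau-\tau_1)^2},\qquad M(\tau):=p(\tau)-p(\tau_1)-p'(\tau)(\tau-\tau_1),\qquad M'(\tau)=-p''(\tau)(\tau-\tau_1).
$$
Thus $M(\tau_1)=0$, the sign of $g'$ equals the sign of $M$, and the zeros of $M$ for $\tau>\tau_1$ are exactly the tangency points where $\frac{p(\tau)-p_1}{\tau-\tau_1}=p'(\tau)$. Feeding in the sign pattern of $p''$ for equation of state II, namely positive on $(0,\tau_1^i)$, negative on $(\tau_1^i,\tau_2^i)$ and positive on $(\tau_2^i,\infty)$, the identity $M'=-p''\cdot(\tau-\tau_1)$ pins down the qualitative shape of $g$ in each case.

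For $\tau_1\in(\hat\tau_1,\tau_1^i)$ the function $M$ first decreases below $0$ on the near convex branch, then increases through $0$ at a first tangency $\tau_a\in(\tau_1^i,\tau_2^i)$, and finally decreases through $0$ at a second tangency on the far convex branch. Hence $g$ dips, rises back past its initial value $-p'(\tau_1)$ at the point where $g=-p'(\tau_1)$ (the chord slope equals $p'(\tau_1)$), which I identify with $\tau_{1d}$, and peaks at the far tangency $\tau_{1f}$ (where $M=0$) before decreasing. The running-maximum condition then holds exactly on $(\tau_{1d},\tau_{1f})$: below $\tau_{1d}$ the values of $g$ near $\tau_1$ dominate $g(\tau)$, and above $\tau_{1f}$ the peak value $g(\tau_{1f})$ dominates. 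For $\tau_1\in(\tau_1^i,\tau_2^i)$ the curve is concave at $\tau_1$, so $M>0$ immediately and $g$ increases monotonically from $-p'(\tau_1)$ up to the single far tangency $\tau_{1g}>\tau_2^i$; the running-maximum condition then holds on all of $(\tau_1,\tau_{1g})$. This reproduces the two cases of the statement, and the orderings $\tau_1^i<\tau_{1d}<\tau_{1f}$ and $\tau_{1g}>\tau_2^i$ fall out of the locations of the sign changes of $M$.

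The main obstacle is the global bookkeeping needed to make this local picture rigorous: proving that the far tangency point ($\tau_{1f}$, respectively $\tau_{1g}$) exists and is unique, and that $M$ has no spurious additional zeros. Existence of the far tangency is where assumption (A1) enters, forcing $p'(\tau)(\tau-\tau_1)\to0$ as $\tau\to\infty$, so that $M(\tau)\to p(\infty)-p_1<0$ while $M(\tau_2^i)>0$, yielding a sign change on the far convex branch; uniqueness follows because $M'=-p''(\tau-\tau_1)$ keeps a fixed sign there. The restriction $\tau_1>\hat\tau_1$ in the first case is precisely what makes the admissible interval nonempty: from the tangency identity $g(\tau_{1f})=-p'(\tau_{1f})$ and the double-tangent relation $p'(\hat\tau_1)=p'(\hat\tau_2)$ one checks that $g(\tau_{1f})>-p'(\tau_1)$ holds exactly when $\tau_1>\hat\tau_1$, with equality corresponding to the degenerate double-tangent line. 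Finally, the endpoint $\tau_1=\tau_1^i$ is a limiting case in which $\tau_{1d}\to\tau_1$ and the first case degenerates into the second.
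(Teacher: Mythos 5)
Your argument is correct and is precisely the justification the paper has in mind: the paper omits the proof entirely, declaring Corollaries \ref{cor1}--\ref{cor5} obvious consequences of the geometric form of Liu's condition in Lemma \ref{lem21} (citing \cite{GN,LeFloch}), and your reduction to the running-maximum property of the chord slope $g(\tau)=-\frac{p(\tau)-p(\tau_1)}{\tau-\tau_1}$, analyzed through the sign of $M(\tau)=p(\tau)-p(\tau_1)-p'(\tau)(\tau-\tau_1)$ and the convex--concave--convex shape of $p$, is exactly that standard chord--tangent argument. The points you flag as needing care (existence/uniqueness of the far tangency via (A1), nonemptiness exactly for $\tau_1>\hat{\tau}_1$ via the double-tangent line) are the right ones, and your identification of $\tau_{1d}$, $\tau_{1f}$, $\tau_{1g}$ with the stated defining relations is consistent with the corollary.
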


\begin{cor}\label{cor4}
For equation of state III, the set $\mathcal{S}_{c}$ of the sates which can be connected to $(u_1, \tau_1)$ by a forward admissible compression shock on the left is given by:
\begin{itemize}
  \item If $\tau_1\in (0, \tilde{\tau}_2]$, then $\mathcal{S}_{c}=\{(u, \tau)\mid u=\phi(\tau; u_1, \tau_1), \tau<\min\{\tilde{\tau}_1, \tau_{1}\}\}$; see Figure \ref{fignew}(1).
  \item If $\tau_1\in (\tilde{\tau}_2, +\infty)$, then $\mathcal{S}_{c}=\{(u, \tau)\mid u=\phi(\tau; u_1, \tau_1), 0<\tau<\tau_{1h}~\mbox{and}~\tilde{\tau}_{2}<\tau<\tau_{1}\}$, where $\tau_{1h}$ is determined by $$\frac{p(\tau_{1h})-p(\tau_1)}{\tau_{1h}-\tau_1}=\frac{p(\tilde{\tau}_{2})-p(\tau_1)}{\tilde{\tau}_{2}-\tau_1};$$ see Figure \ref{fignew}(2).
\end{itemize}
\end{cor}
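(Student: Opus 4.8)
The plan is to translate the entropy condition of Lemma \ref{lem21} into a statement about the geometry of the graph of $p$ in the $(\tau,p)$-plane, and then read off the admissible range of $\tau$ by tracking how a secant chord anchored at the front state $(\tau_1,p_1)$ meets the two convex branches and the plateau of equation of state III. Writing $m(\eta):=-\frac{p(\eta)-p_1}{\eta-\tau_1}$, Lemma \ref{lem21} says that a forward compression shock ($\tau<\tau_1$) from $(u_1,\tau_1)$ to $(u,\tau)=(\phi(\tau;u_1,\tau_1),\tau)$ is admissible precisely when $m$ attains its maximum over $[\tau,\tau_1]$ at the behind state $\tau$. Dividing the defining inequality by $(\eta-\tau_1)<0$ converts this into the geometric criterion that the graph of $p$ lies on or below the secant chord $L$ joining $(\tau_1,p_1)$ and $(\tau,p(\tau))$ throughout $(\tau,\tau_1)$. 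Since $\mathcal{S}_c$ is by definition $\{(u,\tau):u=\phi(\tau;u_1,\tau_1)\}$ restricted to admissible $\tau<\tau_1$, both bullets reduce to computing this $\tau$-range, using only that $p''>0$ on each branch and $p(\tilde{\tau}_1)=p(\tilde{\tau}_2)$ on the plateau.

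For the first bullet ($\tau_1\in(0,\tilde{\tau}_2]$) I would argue by convexity. If $\tau_1<\tilde{\tau}_1$ then $(\tau,\tau_1)\subset(0,\tilde{\tau}_1)$, where $p''>0$; a convex arc lies below each of its chords, so every compression shock is admissible and the range is $\tau<\tau_1=\min\{\tilde{\tau}_1,\tau_1\}$. If $\tau_1\in[\tilde{\tau}_1,\tilde{\tau}_2]$ then $p_1=p(\tilde{\tau}_1)=p(\tilde{\tau}_2)$, and any state with $\tau\in[\tilde{\tau}_1,\tau_1)$ has $p(\tau)=p_1$, a null shock that is not a compression shock and is excluded. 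For $\tau<\tilde{\tau}_1$ the chord $L$ is decreasing with $L\ge p_1$ on $[\tilde{\tau}_1,\tau_1]$, so it dominates the plateau value there, while on $(\tau,\tilde{\tau}_1)$ the difference $D:=p-L$ is convex with $D(\tau)=0$ and $D(\tilde{\tau}_1)<0$, whence $D\le0$. Thus every such shock is admissible and the range is $\tau<\tilde{\tau}_1=\min\{\tilde{\tau}_1,\tau_1\}$, giving the first bullet.

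For the second bullet ($\tau_1\in(\tilde{\tau}_2,+\infty)$, so $p_1<p(\tilde{\tau}_2)$) I would split according to where $\tau$ lands. On the right branch, $\tau\in(\tilde{\tau}_2,\tau_1)$, convexity gives $p\le L$ and admissibility. On the plateau, $\tau\in[\tilde{\tau}_1,\tilde{\tau}_2]$, choosing an intermediate $\eta\in(\tau,\tilde{\tau}_2)$ gives $p(\eta)=p(\tilde{\tau}_2)$ strictly above the decreasing chord, so these are inadmissible. On the left branch, $\tau\in(0,\tilde{\tau}_1)$, the sole obstruction is the plateau: since $L$ is decreasing, it dominates the constant value $p(\tilde{\tau}_2)$ on $[\tilde{\tau}_1,\tilde{\tau}_2]$ exactly when it does so at the right corner, i.e. $L(\tilde{\tau}_2)\ge p(\tilde{\tau}_2)$, whose equality case is the collinearity of $(\tau,p(\tau))$, $(\tilde{\tau}_2,p(\tilde{\tau}_2))$ and $(\tau_1,p_1)$ that defines $\tau_{1h}$. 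I would then show the secant value $L(\tilde{\tau}_2)$ is monotone in $\tau$: the numerator $N(\tau)=p'(\tau)(\tau-\tau_1)-(p(\tau)-p_1)$ of the secant slope satisfies $N'(\tau)=p''(\tau)(\tau-\tau_1)<0$ on the left branch, and with $L(\tilde{\tau}_2)\to+\infty$ as $\tau\to0^+$ and $L(\tilde{\tau}_2)<p(\tilde{\tau}_2)$ as $\tau\to\tilde{\tau}_1^-$ this pins a unique $\tau_{1h}\in(0,\tilde{\tau}_1)$ with admissibility exactly for $\tau<\tau_{1h}$. Once $L(\tilde{\tau}_2)\ge p(\tilde{\tau}_2)$, the convex difference $D=p-L$ vanishes at $\tau_1$ (resp. $\tau$) and is $\le0$ at $\tilde{\tau}_2$ (resp. $\tilde{\tau}_1$) on the two convex arcs, so $D\le0$ throughout; this yields the admissible range $(0,\tau_{1h})\cup(\tilde{\tau}_2,\tau_1)$, the second bullet.

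The hard part will be the plateau bookkeeping in the second bullet: recognizing that among all points of the graph the only one that can rise above an anchored decreasing chord is the right corner $(\tilde{\tau}_2,p(\tilde{\tau}_2))$ of the flat region, so that the single scalar inequality $L(\tilde{\tau}_2)\ge p(\tilde{\tau}_2)$ governs admissibility and, through the monotonicity of the secant slope, converts into the clean threshold $\tau_{1h}$. The convexity estimates on each branch are routine once this reduction is in place.
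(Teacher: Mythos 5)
The paper gives no proof of Corollary \ref{cor4}: it declares Corollaries \ref{cor1}--\ref{cor5} obvious and refers to \cite{GN,LeFloch}. Your chord-geometry reduction of the entropy condition (E) via Lemma \ref{lem21} --- admissibility of a compression shock with behind state $\tau<\tau_1$ is equivalent to the graph of $p$ lying on or below the secant joining $(\tau_1,p_1)$ to $(\tau,p(\tau))$ on $(\tau,\tau_1)$ --- is exactly the standard (Wendroff/Liu) criterion those references use, and your case analysis over the two convex branches and the plateau correctly reproduces both bullets, including the exclusion of the plateau states (where $p=p_1$ forces $\sigma=u_1$, so no genuine forward shock) and the identification of $\tau_{1h}$ through collinearity of $(\tau_{1h},p(\tau_{1h}))$, $(\tilde{\tau}_2,p(\tilde{\tau}_2))$, $(\tau_1,p_1)$.

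One step is overclaimed. From $N'(\tau)=p''(\tau)(\tau-\tau_1)<0$ you conclude that the secant slope, hence $L(\tilde{\tau}_2)$, is monotone in $\tau$ on $(0,\tilde{\tau}_1)$. That does not follow: $N$ decreasing only shows $N$ changes sign at most once from $+$ to $-$, so the secant slope is increasing-then-decreasing and $L(\tilde{\tau}_2)$ is merely quasi-convex (decreasing then increasing) in $\tau$; whether it is actually monotone depends on the sign of $N(\tilde{\tau}_1^-)$, which is not determined by the hypotheses. The conclusion survives, because quasi-convexity together with your two boundary facts ($L(\tilde{\tau}_2)\to+\infty$ as $\tau\to0^+$, which tacitly uses $p(\tau)\to+\infty$ as $\tau\to 0^+$, and $L(\tilde{\tau}_2)<p(\tilde{\tau}_2)$ as $\tau\to\tilde{\tau}_1^-$) still forces the superlevel set $\{L(\tilde{\tau}_2)\ge p(\tilde{\tau}_2)\}$ to be a single interval $(0,\tau_{1h}]$. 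Cleaner still: the condition $L(\tilde{\tau}_2)\ge p(\tilde{\tau}_2)$ is equivalent to $G(\tau):=p(\tau)-\ell(\tau)\ge 0$, where $\ell$ is the fixed line through $(\tau_1,p_1)$ and $(\tilde{\tau}_2,p(\tilde{\tau}_2))$; $G$ is genuinely convex on $(0,\tilde{\tau}_1)$ with $G(\tilde{\tau}_1^-)<0$ and $G(0^+)=+\infty$, so $\{G\ge0\}=(0,\tau_{1h}]$ with no monotonicity needed. With that substitution the argument is complete.
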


\begin{cor}\label{cor5}
For equation of state III, if $\tau_1\in (\tilde{\tau}_1, \tilde{\tau}_2)$, then the set $\mathcal{S}_{r}$ of the sates which can be connected to $(u_1, \tau_1)$ by a forward admissible rarefaction shock on the left is given by
 $$\mathcal{S}_{r}=\{(u, \tau)\mid u=\phi(\tau; u_1, \tau_1), \tilde{\tau}_2<\tau<\tau_{1i}\}$$
where $\tau_{1i}$ is determined by $$\frac{p(\tau_{1i})-p(\tau_1)}{\tau_{1i}-\tau_1}=p'(\tau_{1i});$$ see Figure \ref{fignew}(3).
\end{cor}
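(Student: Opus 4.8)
The plan is to translate the entropy condition (E), in the geometric form furnished by Lemma~\ref{lem21}, into a monotonicity analysis of the negative chord slope
\[
G(\tau):=-\frac{p(\tau)-p_1}{\tau-\tau_1},\qquad \tau>\tau_1,
\]
where $p_1=p(\tau_1)$. Since any state on the forward shock set through $(u_1,\tau_1)$ satisfies $u=\phi(\tau;u_1,\tau_1)$, and since a rarefaction shock on the left increases the specific volume ($\tau>\tau_1$), Lemma~\ref{lem21} says that the shock to $(u,\tau)$ is admissible if and only if $\sqrt{G(\tau)}\ge\sqrt{G(\tau')}$, that is $G(\tau)\ge G(\tau')$, for every $\tau'\in(\tau_1,\tau)$. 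Thus the corollary reduces to the question: for which $\tau>\tau_1$ is $G(\tau)$ the maximum of $G$ on $(\tau_1,\tau)$? First I would record the two regimes dictated by equation of state III with $\tau_1\in(\tilde{\tau}_1,\tilde{\tau}_2)$: on $(\tau_1,\tilde{\tau}_2]$ one has $p(\tau)\equiv p_1$, hence $G\equiv 0$; on $(\tilde{\tau}_2,+\infty)$ one has $p(\tau)<p_1$ and $G(\tau)>0$.

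Next I would pin down the shape of $G$ on the convex branch $(\tilde{\tau}_2,+\infty)$. Writing $G'(\tau)=N(\tau)/(\tau-\tau_1)^2$ with
\[
N(\tau)=-p'(\tau)(\tau-\tau_1)+p(\tau)-p_1,
\]
a direct differentiation gives $N'(\tau)=-p''(\tau)(\tau-\tau_1)$, which is strictly negative for $\tau>\tilde{\tau}_2$ because $p''>0$ there. Since $N(\tilde{\tau}_2^{+})=-p'(\tilde{\tau}_2^{+})(\tilde{\tau}_2-\tau_1)>0$ (using $p(\tilde{\tau}_2)=p_1$ and $p'<0$ on the branch), while $N(\tau)<0$ for $\tau$ large (the term $p'(\tau)(\tau-\tau_1)$ vanishing in the limit by assumption (A1), and $p(\tau)-p_1<0$), the strictly decreasing function $N$ has a unique zero, which by its very definition is the tangency point $\tau_{1i}$ solving $p'(\tau_{1i})=\frac{p(\tau_{1i})-p_1}{\tau_{1i}-\tau_1}$. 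Consequently $G$ is strictly increasing on $(\tilde{\tau}_2,\tau_{1i})$ and strictly decreasing on $(\tau_{1i},+\infty)$, so $\tau_{1i}$ is the unique global maximizer of $G$ over $(\tau_1,+\infty)$.

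Finally I would read off admissibility from this profile. If $\tau\in(\tilde{\tau}_2,\tau_{1i})$, then on $(\tau_1,\tau)$ the function $G$ is first $0$ and then strictly increasing up to $G(\tau)$, so $G(\tau)\ge G(\tau')$ holds for all intermediate $\tau'$ and the shock is admissible. If $\tau>\tau_{1i}$, then $\tau_{1i}\in(\tau_1,\tau)$ and $G(\tau_{1i})>G(\tau)$, so the criterion of Lemma~\ref{lem21} fails and the shock is inadmissible. The excluded boundary cases are exactly the degenerate ones: for $\tau\in(\tau_1,\tilde{\tau}_2]$ we have $p(\tau)=p_1$ and $u=\phi(\tau;u_1,\tau_1)=u_1$, so the discontinuity carries no pressure jump and is a contact rather than a genuine rarefaction shock, while $\tau=\tau_{1i}$ is the limiting tangent shock at which the shock speed coincides with a characteristic speed. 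This yields exactly $\mathcal{S}_{r}=\{(u,\tau)\mid u=\phi(\tau;u_1,\tau_1),\ \tilde{\tau}_2<\tau<\tau_{1i}\}$. I expect the main obstacle to be the second step, namely rigorously establishing the increase-then-decrease shape of $G$ with its maximum pinned precisely at $\tau_{1i}$, together with the careful bookkeeping at the corner $\tilde{\tau}_2$, where $p'$ is only one-sided and $G$ transitions from the flat regime to the convex branch.
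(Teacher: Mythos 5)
Your proof is correct, and it is exactly the standard chord--slope argument behind Figure \ref{fignew}(3): the paper offers no proof of its own, declaring Corollaries \ref{cor1}--\ref{cor5} obvious and pointing to \cite{GN,LeFloch}, and your reduction via Lemma \ref{lem21} to maximizing $G(\tau)=-\frac{p(\tau)-p_1}{\tau-\tau_1}$, with $N'(\tau)=-p''(\tau)(\tau-\tau_1)<0$ on the convex branch, is the intended reasoning. As a bonus, your sign analysis of $N$ (positive at $\tilde{\tau}_2^{+}$, negative for large $\tau$ by assumption (A1)) actually establishes existence and uniqueness of the tangency point $\tau_{1i}$, and your handling of the excluded endpoints --- the zero-mass-flux contacts on the plateau $(\tau_1,\tilde{\tau}_2]$ and the sonic tangent shock at $\tau=\tau_{1i}$ --- is consistent with the paper's convention of taking the interval open.
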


\begin{figure}[htbp]
\begin{center}
\includegraphics[scale=0.25]{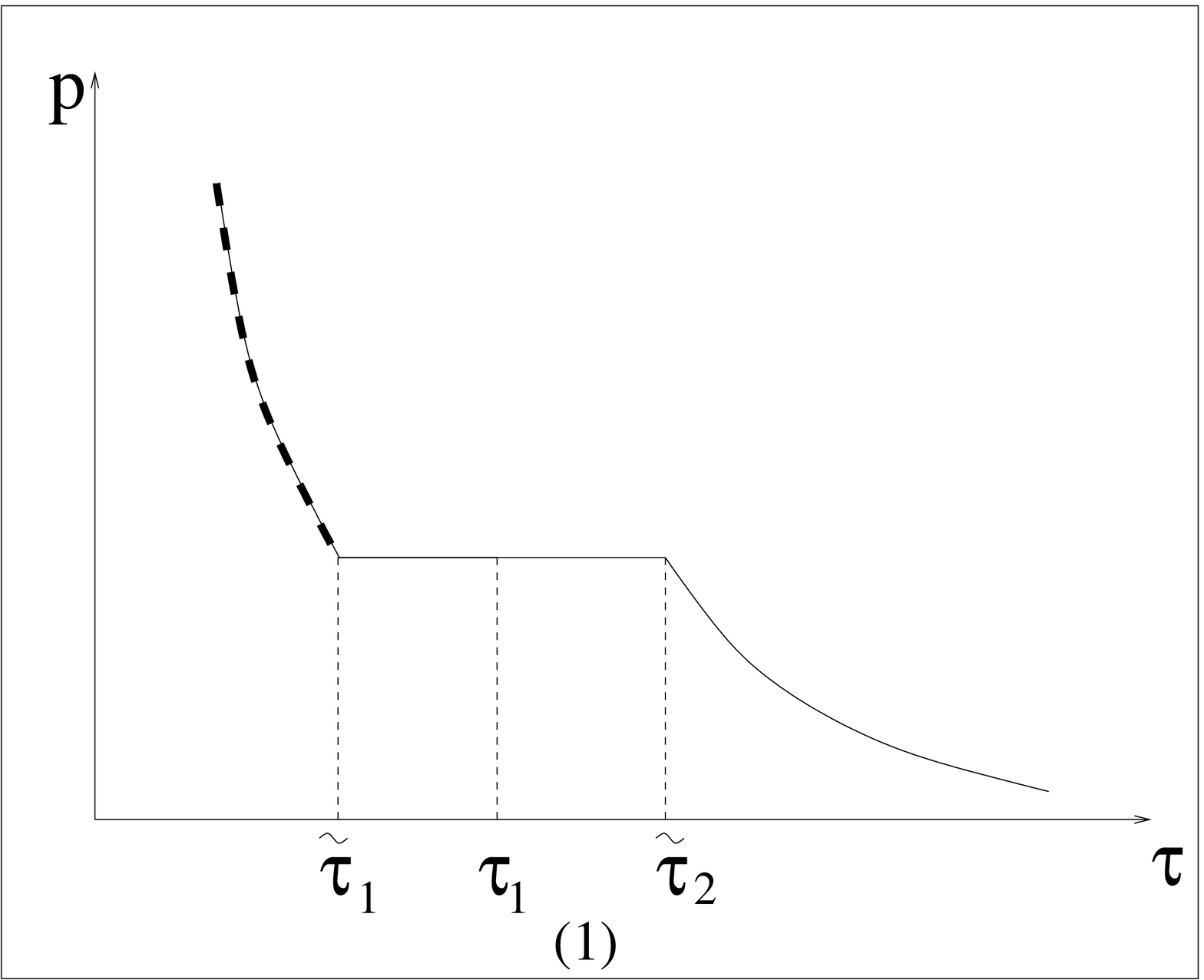}~~\includegraphics[scale=0.25]{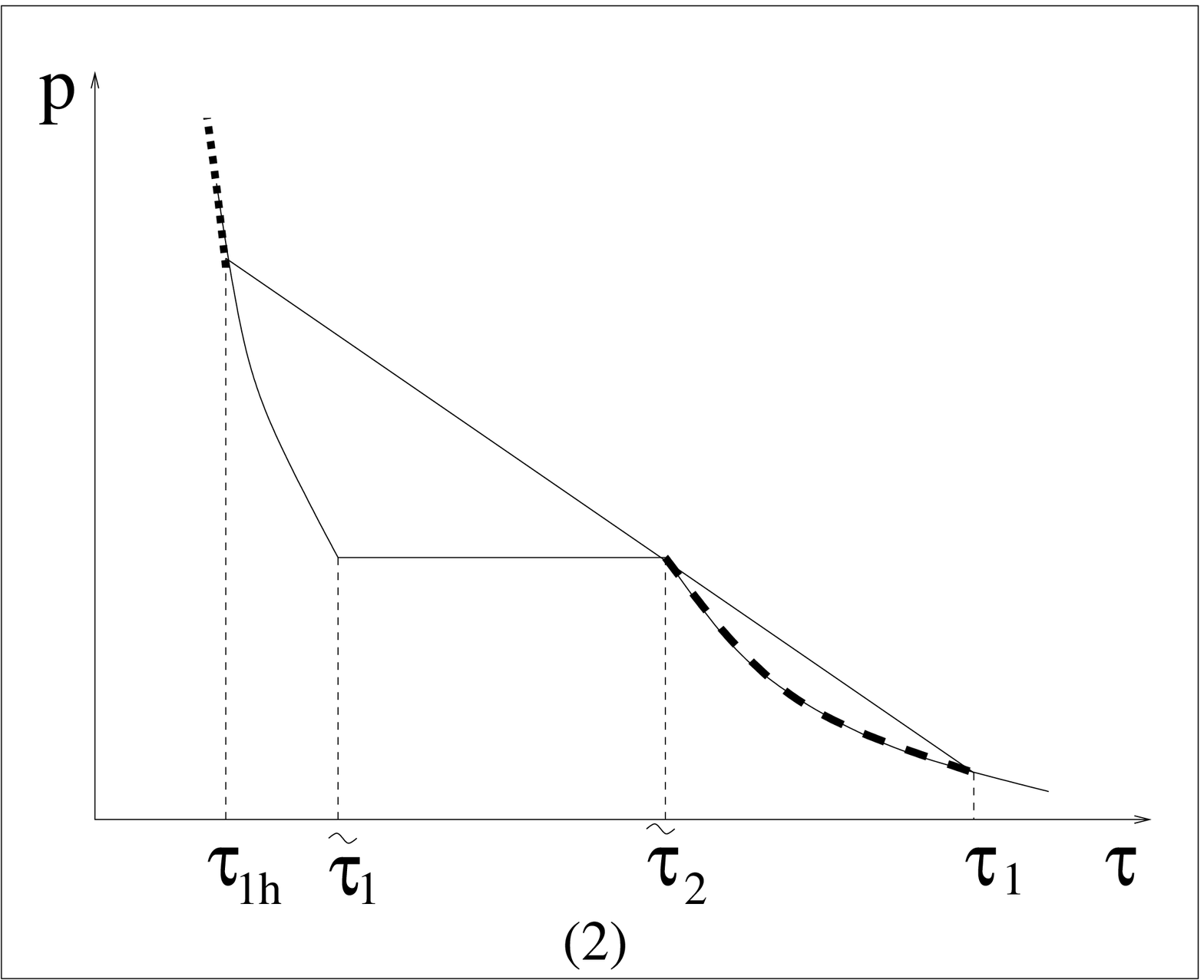}
~~\includegraphics[scale=0.25]{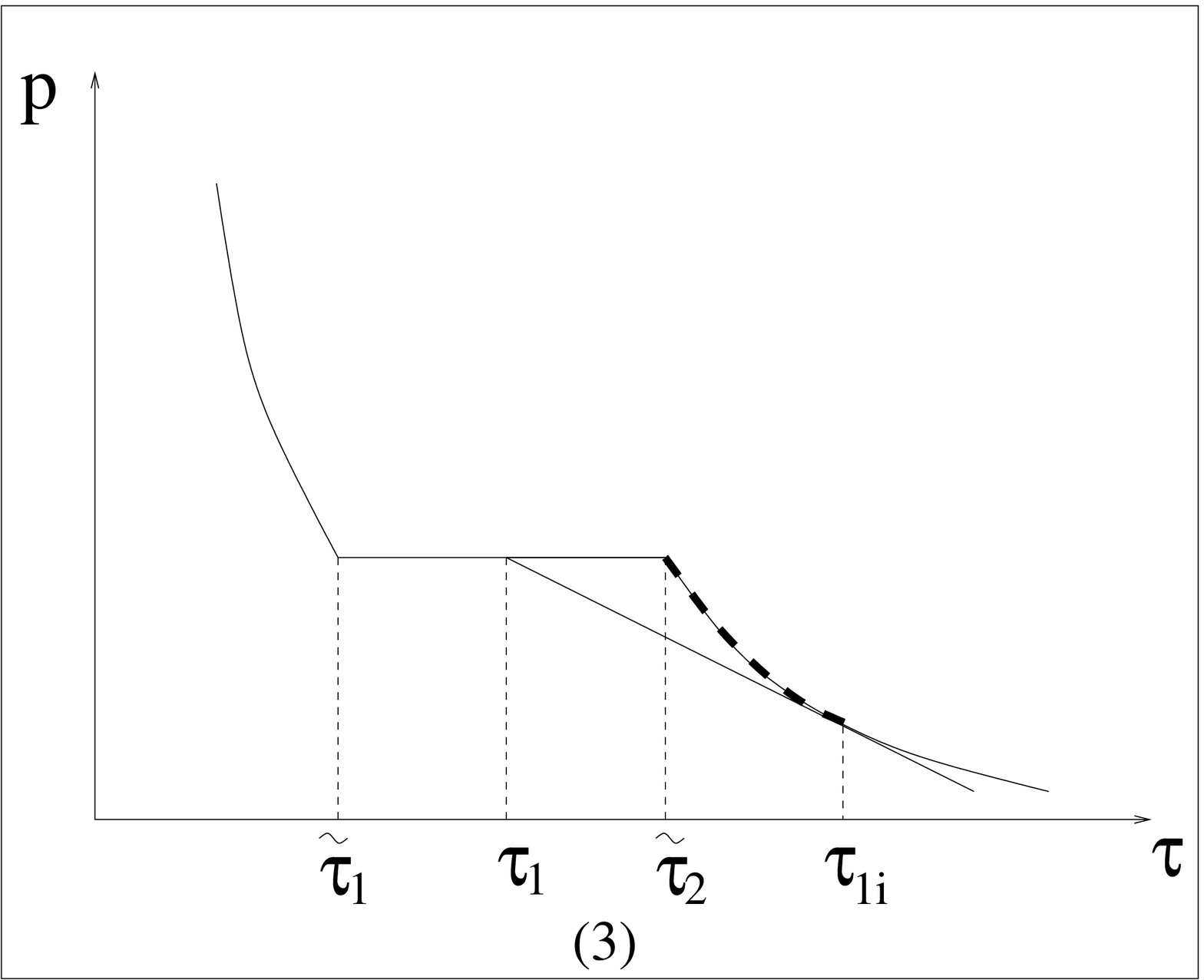}
\caption{\footnotesize Admissible shocks for equation of state III.}
\label{fignew}
\end{center}
\end{figure}

Corollaries \ref{cor1}--\ref{cor5} are obviously, so we omit their proofs. The readers can also see
 \cite{GN,LeFloch} for more details.

\section{\bf Self-similar solutions for $u_0>0$}

In this section, we will construction the self-similar solutions of the problem (\ref{AE}), (\ref{IBV})  for $u_0>0$.

\subsection{Equation of state I}
\begin{lem}\label{lem3.1}
For any fixed $S>0$, if the initial value problem (\ref{ODE2}), (\ref{ID1}) has a solution $(u_1, \rho_1)(s)$ in $(0, S)$ and $s^2p'(\rho_1)-(1-u_1s)^2<0$ as $0<s<S$, then
we have
$$
u_1(s)>0,\quad \rho_1(s)>0, \quad \frac{{\rm d} u_1(s)}{{\rm d} s}<0,\quad\mbox{and}\quad \frac{{\rm d} \rho_1(s)}{{\rm d} s}<0\quad\mbox{as}\quad 0<s<S.
$$
\end{lem}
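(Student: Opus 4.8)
The plan is to exploit the fact that each of the two equations in (\ref{ODE2}) is linear and homogeneous in its own unknown, so that positivity is essentially automatic, and then to read off the signs of the two derivatives once the sign of $1-u_1 s$ has been pinned down. Throughout I write $D(s):=s^2p'(\rho_1)-(1-u_1 s)^2$, so that by hypothesis $D<0$ on $(0,S)$ and $D$ is bounded away from $0$ on every compact subinterval. Recall also that for equation of state I one has $p'(\rho)={\rm d}p/{\rm d}\rho>0$ for $\rho>0$ (since $p'(\tau)<0$ and $\rho=1/\tau$), so $\sqrt{p'(\rho_1)}$ is a well-defined positive quantity wherever $\rho_1>0$.

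First I would prove $\rho_1>0$ on $(0,S)$. The second equation of (\ref{ODE2}) reads $\frac{{\rm d}\rho_1}{{\rm d}s}=b(s)\rho_1$ with $b(s):=2u_1(1-u_1 s)/D$, a scalar linear homogeneous ODE whose coefficient $b$ is continuous on $(0,S)$ (here $u_1$ is the given solution and $D<0$ is locally bounded away from $0$). If $\rho_1$ vanished at some $s_0\in(0,S)$, then $\rho_1$ and the zero function would be two solutions of this linear equation agreeing at $s_0$; by uniqueness they would coincide on all of $(0,S)$, forcing $\rho_1\to 0$ as $s\to 0^+$ and contradicting $\rho_1(0)=\rho_0>0$. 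Since $\rho_1$ is continuous with $\rho_1(0)=\rho_0>0$ and never vanishes, the intermediate value theorem gives $\rho_1>0$ on $(0,S)$. With $\rho_1>0$ in hand, $p'(\rho_1)>0$ is defined and continuous, so the first equation $\frac{{\rm d}u_1}{{\rm d}s}=a(s)u_1$ with $a(s):=2p'(\rho_1)s/D$ is again scalar linear homogeneous with continuous coefficient; the identical uniqueness argument, now using $u_1(0)=u_0>0$, yields $u_1>0$ on $(0,S)$.

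The crux is to determine the sign of $1-u_1 s$, and this is where the hypothesis $D<0$ (rather than $D>0$) is used decisively. I would factor $D=-\,g(s)\tilde g(s)$ with $g(s):=1-s\big(u_1+\sqrt{p'(\rho_1)}\big)$ and $\tilde g(s):=1-s\big(u_1-\sqrt{p'(\rho_1)}\big)$. Both $g$ and $\tilde g$ are continuous on $(0,S)$ and tend to $1$ as $s\to 0^+$; since $g\tilde g=-D>0$, neither factor can vanish on $(0,S)$, so each keeps its (positive) sign from near $s=0$ throughout the connected interval $(0,S)$. In particular $g(s)>0$ gives $1-u_1 s>s\sqrt{p'(\rho_1)}>0$. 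Feeding the now-known signs into (\ref{ODE2}) finishes the argument: in $\frac{{\rm d}u_1}{{\rm d}s}=2p'(\rho_1)u_1 s/D$ the numerator is positive while $D<0$, so $\frac{{\rm d}u_1}{{\rm d}s}<0$; and in $\frac{{\rm d}\rho_1}{{\rm d}s}=2\rho_1 u_1(1-u_1 s)/D$ the numerator is again positive while $D<0$, so $\frac{{\rm d}\rho_1}{{\rm d}s}<0$. I expect the positivity step to be routine given the linear structure of the system, and the determination of the sign of $1-u_1 s$ to be the main obstacle, since an incorrect sign there would reverse the claimed monotonicity of $\rho_1$.
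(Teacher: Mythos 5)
Your proposal is correct and follows essentially the same route as the paper: positivity of $u_1$ and $\rho_1$ comes from the homogeneous linear structure of each equation (the paper phrases this as a divergent integral $\int_{u_0}^0 u^{-1}\,{\rm d}u$ contradiction, which is the same mechanism as your uniqueness argument), and the sign of $1-u_1s$ is pinned down by continuity from $s=0$ together with $D<0$. Your explicit factorization $-D=g\tilde g$ is in fact a cleaner rendering of the step the paper states somewhat garbled (its ``$u_1(s)<0$'' is evidently a typo for the positivity just established), so nothing further is needed.
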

\begin{proof}
Assume there exists a $0<s_*<S$ such that $u_1(s)>0$ as $0<s<s_*$ and $u_1(s_*)=0$. Then by (\ref{ODE2}) we have
$$
\int_{u_0}^{0}\frac{1}{u}
~{\rm d}u=\int_{0}^{s_*}\frac{2p'(\rho_1(s)) s}{s^2 p'(\rho_1(s))-(1-u_1(s)s)^2}~{\rm d}s,
$$
which leads to a contradiction. Thus, we have $u_1(s)>0$ and $\frac{{\rm d} u_1(s)}{{\rm d} s}<0$ as $0<s<S$. Similarly, we can prove $\rho_1(s)>0$ as $0<s<S$.
From $s^2p'(\rho_1)-(1-u_1s)^2<0$ and $u_1(s)<0$,  we have $u_1(s)s<1$, consequently we have $\frac{{\rm d} \rho_1(s)}{{\rm d} s}<0$ as $0<s<S$. We then complete the proof of this lemma.
\end{proof}

\begin{lem}\label{lem1}
For any fixed $S>0$, if the initial value problem (\ref{ODE2}), (\ref{ID1}) has a solution $(u_1, \rho_1)(s)$ in $(0, S)$ and $\rho_1(s)>0$ and $h(\rho_1(s), s)>0$ as $0<s<S$, then $0< u_1(s)<h(\rho_1(s), s)$ as $0<s<S$.
\end{lem}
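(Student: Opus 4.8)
The statement has two parts, and I would obtain the harder one — the upper bound $u_1(s)<h(\rho_1(s),s)$ — first, since the lower bound $u_1(s)>0$ then follows for free. Indeed, by the first of the three bulleted properties following (\ref{h}), $u_1<h$ on $(0,S)$ forces the denominator $s^2p'(\rho_1)-(1-u_1s)^2$ to be negative on $(0,S)$, whereupon Lemma \ref{lem3.1} immediately yields $u_1>0$. So the whole task reduces to showing that the trajectory never leaves the region $\{u_1<h\}$.

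I would begin by recording the behaviour near the origin. Since $\rho_1(0)=\rho_0$ and $p'(\rho)>0$ (because $\tau=1/\rho$ and $p'(\tau)<0$), the map $s\mapsto h(\rho_1(s),s)=\tfrac1s-\sqrt{p'(\rho_1(s))}$ is continuous on $(0,S)$ and tends to $+\infty$ as $s\to0^+$, while $u_1(s)\to u_0$ is bounded. Hence $u_1(s)<h(\rho_1(s),s)$ on some initial interval $(0,\delta)$.

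The heart of the proof is to rule out a first crossing. Suppose $u_1<h$ fails somewhere and set $s_*:=\inf\{s\in(0,S):u_1(s)\ge h(\rho_1(s),s)\}$. Then $s_*\in[\delta,S)$, one has $u_1<h$ on $(0,s_*)$, and by continuity $u_1(s_*)=h(\rho_1(s_*),s_*)$, which is $>0$ by hypothesis. By the second bulleted property this means $s_*^2p'(\rho_1(s_*))-(1-u_1(s_*)s_*)^2=0$; yet at this very point the numerator $2p'(\rho_1(s_*))u_1(s_*)s_*$ of ${\rm d}u_1/{\rm d}s$ is strictly positive, since all three factors are positive. Consequently ${\rm d}u_1/{\rm d}s$ would be infinite at $s_*$, which is impossible for a classical $C^1$ solution of (\ref{ODE2}) defined on all of $(0,S)$. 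Hence no crossing occurs, so $u_1<h$ holds on $(0,S)$, and the positivity $u_1>0$ then follows from Lemma \ref{lem3.1} as explained above, giving $0<u_1(s)<h(\rho_1(s),s)$.

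I expect the only genuine obstacle to be exactly this singular-point step: the vector field (\ref{ODE2}) blows up on the sonic locus where the denominator vanishes, and one must argue that a bona fide classical solution cannot touch it inside $(0,S)$. The hypothesis $h(\rho_1(s),s)>0$ is precisely what makes this work, as it guarantees $u_1(s_*)>0$ and thereby keeps the numerator of ${\rm d}u_1/{\rm d}s$ bounded away from zero at the prospective crossing, so that no cancellation can tame the blow-up.
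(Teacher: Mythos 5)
Your proof is correct, but its key step is genuinely different from the paper's. You rule out a first touching point $s_*$ of the sonic curve $u=h$ by observing that there the denominator $s^2p'(\rho_1)-(1-u_1s)^2$ in (\ref{ODE2}) vanishes while the numerator $2p'(\rho_1)u_1s$ stays strictly positive (precisely because the hypothesis $h(\rho_1(s_*),s_*)>0$ forces $u_1(s_*)>0$), so the vector field is undefined at $s_*$ and a classical solution on all of $(0,S)$ cannot contain such a point. The paper instead computes $\frac{{\rm d}(u_1-h)}{{\rm d}s}$ in (\ref{100504}), combines the two singular terms into the single expression (\ref{100505}) whose sign is that of $u_1\tau_1^2p''(\tau_1)$, and concludes that $\frac{{\rm d}(u_1-h)}{{\rm d}s}\to-\infty$ as $s\to s_0^-$, contradicting the fact that $u_1-h$ must rise to $0$ from below. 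Your argument is shorter and makes no use of the convexity $p''(\tau)>0$; the paper's argument buys more, namely that the trajectory is repelled from the sonic curve from the left wherever $p''(\tau_1)>0$, and it is this finer one-sided information (rather than the bare statement of the lemma) that is reused in Section 3.2.2 to locate the sonic touching point $\tau_1(s_*)\in(\tau_0,\tau_2^i)$ for the nonconvex equation of state II, where the classical solution genuinely terminates at $s_*$. Your reduction of the lower bound $u_1>0$ to Lemma \ref{lem3.1} (via the first bulleted property after (\ref{h})) is consistent with the paper's implicit use of that lemma; note only that it relies on the standing assumption $u_0>0$ of Section 3.
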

\begin{proof}
It is obviously that $0<u_1(s)<h(\rho_1(s), s)$ as $s$ is sufficiently small. By a direct computation, we have
\begin{equation}\label{100504}
\frac{{\rm }d (u_1-h)}{{\rm d}s}
~=~\frac{2p'(\rho_1) u_1s}{s^2 p'(\rho_1)-(1-u_1s)^2}+\frac{1}{s^2}+\frac{2p''(\rho_1) \rho_1 u_1(1-u_1s)}{2\sqrt{p'(\rho_1)}\big(s^2 p'(\rho_1)-(1-u_1s)^2\big)}.
\end{equation}

 Suppose that $s_0\in(0, S)$ is the ``first" point such that $\rho_1(s_0)>0$, $h(\rho_1(s_0), s_0)>0$ and $u_1(s_0)=h(\rho_1(s_0), s_0)$. Then we have
\begin{equation}\label{100505}
\begin{aligned}
&~p'(\rho_1) u_1s+\frac{p''(\rho_1) \rho_1 u_1(1-u_1s)}{2\sqrt{p'(\rho_1)}}\\=&~
\frac{u_1}{2\sqrt{-p'(\tau_1)}}\Big(-2\tau_1^2p'(\tau_1)s\sqrt{-p'(\tau_1)}+\tau_1^2p''(\tau_1)(1-u_1s)+2\tau_1 p'(\tau_1)(1-u_1s)\Big)\\=&~
\frac{u_1\tau_1^2p''(\tau_1)s\sqrt{p'(\rho_1)}}{2\sqrt{-p'(\tau_1)}}~>~0
\end{aligned}
\end{equation}
as $s=s_0$, where $\tau_1=1/\rho_1$. Here, we use $p'(\rho_1)=-\tau_1^2p'(\tau_1)$ and $p''(\rho_1)=2\tau_1^3p'(\tau_1)+\tau_1^4p''(\tau_1)$.
From $0<u_1(s)<h(\rho_1(s), s)$ as $s<s_0$, we get $s^2 p'(\rho_1)-(1-u_1s)^2<0$ as $s<s_0$.
Hence, we have $\lim\limits_{s\rightarrow s_0^{-}} \frac{{\rm }d (u_1-h)}{{\rm d}s}=-\infty$ which leads to a contradiction.
We then complete the proof of the lemma.
\end{proof}

In what follows, we are going to show that there are the only following three cases for the local solution $(u_1, \rho_1)(s)$:
\begin{itemize}
  \item There exists a $s_*>0$ such that $0<u_1(s)<h(\rho_1(s), s)$ as $s<s_*$ and $h(\rho_1(s_*), s_*)=u_1(s_*)=1/s_{*}$; see Figure \ref{fig1}(left).
  \item There exists a $s_{*}>0$ such that  $0<u_1(s)<h(\rho_1(s), s)$ as $s<s_*$ and  $u_1(s_*)=h(\rho_1(s_*), s_*)=0$; see Figure \ref{fig2}(left).
  \item $0<u_1(s)<h(\rho_1(s), s)$ for all $s>0$; see Figure \ref{fig3}(left).
\end{itemize}

\begin{lem}
If $u_0>0$ is sufficiently large, then there exists a $s_*>0$ such that  $0<u_1(s)<h(\rho_1(s), s)$ as $s<s_*$ and $h(\rho_1(s_*), s_*)=u_1(s_*)=1/s_{*}$.
\end{lem}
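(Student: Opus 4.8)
The plan is to exploit the monotonicity structure guaranteed by Lemmas~\ref{lem3.1} and \ref{lem1} together with a single \emph{uniform} a priori bound on the total decrease of $u_1$ that does not depend on $u_0$. Recall that as long as the local solution stays in the region $\{0<u_1<h,\ \rho_1>0\}$ (equivalently $s^2p'(\rho_1)-(1-u_1s)^2<0$), Lemma~\ref{lem3.1} gives $u_1>0$, $\rho_1>0$ and both $u_1,\rho_1$ strictly decreasing, while Lemma~\ref{lem1} forbids the trajectory from touching the singular curve $u_1=h$ at any point where $\rho_1>0$ and $h>0$. Hence the trajectory can leave this region only by reaching $\rho_1=0$ or by reaching $u_1=h=0$, and otherwise it stays inside for all $s>0$; these are exactly the three cases listed before the lemma, and I would show that for $u_0$ large the first one must occur.

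First I would derive the key bound. Dividing the two equations of (\ref{ODE2}) and using $1-u_1s=(1/s-u_1)s>\sqrt{p'(\rho_1)}\,s$ (which is just the region inequality $u_1<h$), I get
\[
\frac{{\rm d}u_1}{{\rm d}\rho_1}=\frac{p'(\rho_1)\,s}{\rho_1\,(1-u_1s)}<\frac{\sqrt{p'(\rho_1)}}{\rho_1}.
\]
Since $\rho_1$ is strictly decreasing I may use $\rho_1$ as independent variable and integrate from the running value up to $\rho_0$, which yields
\[
u_0-u_1(s)<\int_{0}^{\rho_0}\frac{\sqrt{p'(\rho)}}{\rho}\,{\rm d}\rho=:M .
\]
Assumption (A1) makes $M$ finite: near $\rho=0$ one has $\sqrt{p'(\rho)}/\rho=o(\rho^{\nu/2-1})$, which is integrable because $\nu>0$, and the integrand is continuous on $(0,\rho_0]$. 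Crucially, $M$ depends only on the equation of state and on $\rho_0$, not on $u_0$, so inside the region $u_1(s)>u_0-M$ throughout.

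Now fix $u_0>M$, so that $u_1>u_0-M>0$ on the whole existence interval inside the region, and rule out the two competing cases. The ``never exits'' case is impossible: if the trajectory stayed in the region for all $s>0$, then $\rho_1\downarrow\rho_\infty\ge0$; if $\rho_\infty>0$ then $h=1/s-\sqrt{p'(\rho_1)}\to-\sqrt{p'(\rho_\infty)}<0$ for large $s$, and if $\rho_\infty=0$ then $h\to0$, either way contradicting $0<u_0-M<u_1<h$. Hence the trajectory exits at some finite $s_*$, where it lies on the boundary of the region. There, $u_1(s_*)=0$ is excluded by $u_1>u_0-M>0$, and $u_1(s_*)=h(\rho_1(s_*),s_*)$ with $\rho_1(s_*)>0,\ h(s_*)>0$ is excluded by Lemma~\ref{lem1}; thus $\rho_1(s_*)=0$. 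Finally, since $\frac{{\rm d}\rho_1}{{\rm d}s}$ is $\rho_1$ times a factor that stays bounded as long as the denominator $s^2p'(\rho_1)-(1-u_1s)^2$ is bounded away from $0$, a finite-$s$ vacuum forces this denominator to tend to $0$, i.e. $u_1\to h$; combined with $\sqrt{p'(\rho_1)}\to0$ (by (A1), since $\rho_1(s_*)=0$) this gives $u_1(s_*)=h(\rho_1(s_*),s_*)=1/s_*$, exactly the asserted case.

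The main obstacle is the uniform bound: everything hinges on controlling the total decrease of $u_1$ by a constant $M$ independent of $u_0$ and on (A1) making $M$ finite. The supporting technical point requiring care is the exit analysis at $s_*$ — in particular arguing that a \emph{finite}-$s$ vacuum is possible only when the denominator degenerates, so that $\rho_1(s_*)=0$ automatically comes with $u_1(s_*)=1/s_*$ rather than with $u_1(s_*)<1/s_*$.
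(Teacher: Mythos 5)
Your proposal is correct and follows essentially the same route as the paper: the heart of both arguments is the bound $u_0-u_1(s)<\int_0^{\rho_0}\sqrt{p'(\rho)}\,\rho^{-1}\,{\rm d}\rho$, obtained by dividing the two equations of (\ref{ODE2}) and using $1-u_1s>s\sqrt{p'(\rho_1)}$, with (A1) guaranteeing finiteness of the integral. Your additional exit analysis (ruling out global existence via $h\to-\sqrt{p'(\rho_\infty)}\le 0$, and showing a finite-$s$ vacuum forces the denominator to degenerate so that $u_1(s_*)=1/s_*$) merely spells out details the paper leaves implicit.
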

\begin{proof}
If $u_1(s)<h(\rho_1(s), s)$ then we have
$$
u_1(s)s<1\quad \mbox{and}\quad s^2p'(\rho_1(s))-(1-u_1(s)s)^2<0.
$$
Consequently by (\ref{ODE2}) we have
$$\frac{{\rm d} \rho_1}{{\rm d} u_1}~=~\frac{\rho_1 (1-u_1s)}{p'(\rho_1)s}~>~\frac{\rho_1}{\sqrt{p'(\rho_1)}}.$$
Integrating this, we get
\begin{equation}\label{91201}
\int_{0}^{\rho_0}\frac{\sqrt{p'(\rho_1)}}{\rho_1}~{\rm d}\rho_1~\geq~\int_{\rho_1(s)}^{\rho_0}\frac{\sqrt{p'(\rho_1)}}{\rho_1}~{\rm d}\rho_1~>~\int_{u_1(s)}^{u_0}~{\rm d}u_1.
\end{equation}
Combining this with assumption (A1) and Lemmas \ref{lem3.1} and \ref{lem1},  we know that when $u_0$ is sufficiently large, e.g. $u_0>\int_{0}^{\rho_0}\frac{\sqrt{p'(\rho)}}{\rho}~{\rm d}\rho$,
 there exists a $s_*>0$ such that
$\rho_1(s_*)=0$ and $u_1(s_*)=h(\rho_1(s_*), s_*)=1/s_*$.

We then have this lemma.
\end{proof}

\begin{figure}[htbp]
\begin{center}
\includegraphics[scale=0.42]{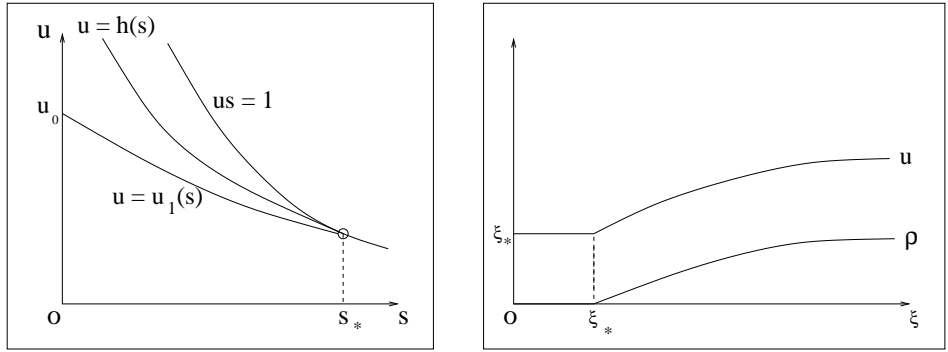}
\caption{\footnotesize Continuous solution with a vacuum, where $\xi=x/t$.}
\label{fig1}
\end{center}
\end{figure}
Therefore, the self-similar solution of the problem (\ref{AE}), (\ref{IBV}) for this case has the form
$$
(u, \rho)(x, t)=\left\{
                 \begin{array}{ll}
                   (u_1, \rho_1)(s), & \hbox{$s<s_*$,} \\[4pt]
                   \big(\xi_*, 0\big), & \hbox{$s>s_*$;}
                 \end{array}
               \right.
$$
where $s=t/x$ and $\xi_*=1/s_*$.
This is a continuous solution with a growing vacuum region; see Figure \ref{fig1}(right).

\begin{lem}
If $u_0>0$ is sufficiently small then there exists a $s_{*}>0$ such that  $0<u_1(s)<h(\rho_1(s), s)$ as $s<s_*$ and  $u_1(s_*)=h(\rho_1(s_*), s_*)=0$.
\end{lem}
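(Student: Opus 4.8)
The plan is to reduce the lemma to a single assertion: when $u_0>0$ is small, the density $\rho_1$ stays bounded away from $0$ along the trajectory. I would set up the maximal interval $(0,s_*)$ on which the local solution $(u_1,\rho_1)(s)$ of (\ref{ODE2}), (\ref{ID1}) satisfies $\rho_1>0$ and $0<u_1<h(\rho_1,s)$. By Lemmas \ref{lem3.1} and \ref{lem1}, on this interval $u_1$ and $\rho_1$ are positive and strictly decreasing (so $0<u_1(s)<u_0$) and $s^2p'(\rho_1)-(1-u_1s)^2<0$; moreover Lemma \ref{lem1} forbids the trajectory from meeting the sonic curve $u_1=h$ while $h>0$ and $\rho_1>0$. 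I would then introduce $G(s):=s\sqrt{p'(\rho_1(s))}$: the inequality $u_1<h$ is exactly $su_1+G<1$, so $G(s)<1$ throughout, while $G=1$ is equivalent to $h=0$. Hence the only ways to leave the region are $\rho_1\to 0$ or $G\to 1$. If one can show $\liminf_{s}\rho_1(s)=:\rho_\ast>0$, then $G$ cannot remain below $1$ on an unbounded $s$-interval (as $s\to\infty$ with $\rho_1$ in the compact set $[\rho_\ast,\rho_0]$, on which $p'$ is bounded below by a positive constant, one gets $G\to\infty$), so by continuity $G(s_*)=1$ at a finite $s_*$ with $\rho_1(s_*)\ge\rho_\ast>0$; then $h(\rho_1(s_*),s_*)=0$, and Lemma \ref{lem1} gives $0\le u_1(s_*)\le h(\rho_1(s_*),s_*)=0$, i.e. $u_1(s_*)=h(\rho_1(s_*),s_*)=0$, which is the claim. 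Everything thus reduces to proving the absence of vacuum for small $u_0$.

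To prove no vacuum, I would compare with the degenerate profile at $u_0=0$, for which $(u_1,\rho_1)\equiv(0,\rho_0)$ and $G(s)=s\sqrt{p'(\rho_0)}$ reaches $1$ at the finite time $\bar s=1/\sqrt{p'(\rho_0)}$ while the density stays equal to $\rho_0>0$. For $u_0>0$, integrating the second equation of (\ref{ODE2}) and using $D:=s^2p'(\rho_1)-(1-u_1s)^2<0$ in the region gives
\[
\rho_0-\rho_1(s)=\int_0^{s}\frac{2\rho_1u_1(1-u_1s')}{|D(s')|}\,{\rm d}s'
\le 2\rho_0u_0\int_0^{s}\frac{{\rm d}s'}{|D(s')|},
\]
where I used $0<u_1<u_0$, $0<\rho_1<\rho_0$, and $0<1-u_1s'<1$. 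On any sub-interval where $G\le 1-\delta$ the denominator $|D|$ is bounded below, so this part of the drop is $O(u_0)$ and $\rho_1$ stays near $\rho_0$; the target is to show that the remaining, near-sonic portion of the integral is also $o(1)$ as $u_0\to0$, so that $\rho_1(s_*)\ge\rho_0-o(1)>0$ and vacuum is excluded.

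The main obstacle is precisely this near-sonic estimate, where $D\to0$. The crude bound $u_1\le u_0$ is not enough there, because for the limiting profile $\int_0^{\bar s}{\rm d}s/|D_0|$ with $D_0=s^2p'(\rho_0)-1$ diverges logarithmically, so naive continuous dependence degenerates exactly at $\bar s$. The resolution I would pursue is to track the two small quantities $u_1$ and $h-u_1$ jointly as $s\uparrow s_*$, using the factorization $|D|=s\,(h-u_1)\big(1-u_1s+s\sqrt{p'(\rho_1)}\big)$ together with the sign computation underlying (\ref{100505}), namely $2p'(\rho_1)+\rho_1p''(\rho_1)=\tau^3p''(\tau)>0$, which governs the rate at which $u_1\to0$ relative to $h-u_1\to0$. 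This is designed to show that in the near-sonic regime the factor $u_1$ in the numerator decays fast enough to beat the growth of $1/|D|$, keeping the density drop integrable and of size $o(1)$ for small $u_0$. Establishing this uniform smallness of the density drop through the sonic region, rather than the routine bookkeeping away from it, is where the real work lies.
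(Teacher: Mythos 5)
Your overall reduction is sound and matches the paper's: granting a positive lower bound $\rho_1(s)\ge\rho_m>0$ along the trajectory, the function $h(\rho_1(s),s)=1/s-\sqrt{p'(\rho_1(s))}$ must vanish at some finite $s_*$, and Lemma \ref{lem1} then forces $u_1(s_*)=h(\rho_1(s_*),s_*)=0$. But the lower bound on the density is the entire content of the lemma, and your argument for it is not complete: you integrate ${\rm d}\rho_1/{\rm d}s$ in the variable $s$, arrive at a bound involving $\int_0^{s}|D|^{-1}\,{\rm d}s'$ with $D=s^2p'(\rho_1)-(1-u_1s)^2\to0$ as $s\to s_*$, correctly observe that this integral can diverge, and then leave the required near-sonic estimate (controlling $u_1$ against $h-u_1$) as ``where the real work lies.'' As written, that is a genuine gap, not a proof.

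The idea you are missing is a change of independent variable that makes the sonic degeneracy disappear. Dividing the two equations of (\ref{ODE2}) gives
$$
\frac{{\rm d}\rho_1}{{\rm d}u_1}~=~\frac{\rho_1(1-u_1s)}{p'(\rho_1)\,s},
$$
in which the vanishing denominator $D$ (and the factor $u_1$) cancel identically. The paper fixes $s_0=\frac{1}{2\sqrt{p'(\rho_0)}}$ and an $\varepsilon$ with $\max_{[\rho_0-\varepsilon,\rho_0]}\sqrt{p'}<\tfrac32\sqrt{p'(\rho_0)}$; by continuous dependence on $u_0$ over the fixed compact interval $[0,s_0]$ (where $D$ stays uniformly negative), $\rho_1(s_0)>\rho_0-\varepsilon$ and $h(\rho_1(s_0),s_0)>0$ for small $u_0$. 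Then for $s>s_0$, since $0<1-u_1s<1$, one has ${\rm d}\rho_1/{\rm d}u_1<\rho_1/\big(s_0p'(\rho_1)\big)$, and integrating in $u_1$ --- whose total variation is at most $u_0$ --- yields
$$
\int_{\rho_1(s)}^{\rho_1(s_0)}\frac{p'(\rho)}{\rho}~{\rm d}\rho~<~\frac{u_0}{s_0},
$$
which pins $\rho_1$ above a positive $\rho_m$ once $u_0$ is small, with no analysis near the sonic point at all. Your proposed route might be completable by exploiting the factorization $|D|=s(h-u_1)\big(1-u_1s+s\sqrt{p'(\rho_1)}\big)$ together with the decay rate of $u_1$, but you have not carried it out, and it is substantially harder than the one-line cancellation above.
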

\begin{proof}
Let $\varepsilon\in (0, \rho_0/2)$ be given such that
\begin{equation}\label{91202}
\max\limits_{\rho\in [\rho_0-\varepsilon, \rho_0]}\sqrt{p'(\rho)}~<~\frac{3}{2}\sqrt{p'(\rho_0)}.
\end{equation}
Let $s_0=\frac{1}{2\sqrt{p'(\rho_0)}}$.
It follows from (\ref{ODE2}) that if $u_0>0$ is sufficiently small then $\rho_1(s_0)>\rho_0-\varepsilon$.
Consequently, by (\ref{91202}) we have $h(\rho_1(s_0), s_0)>0$.

From (\ref{ODE2}), we have
$$\frac{{\rm d} \rho_1}{{\rm d} u_1}~=~\frac{\rho_1 (1-u_1s)}{p'(\rho_1)s}~<~\frac{\rho_1}{s_0p'(\rho_1)}\quad \mbox{as}\quad s>s_0.$$
Hence, we have
\begin{equation}\label{91204}
\int_{\rho_1(s)}^{\rho_0-\varepsilon}\frac{p'(\rho_1)}{\rho_1}~{\rm d}\rho_1~<~\int_{\rho_1(s)}^{\rho_1(s_0)}\frac{p'(\rho_1)}{\rho_1}~{\rm d}\rho_1~<~\frac{1}{s_0}\int_{u_1(s)}^{u_1(s_0)}{\rm d}u_1~<~\frac{u_0}{s_0}.
\end{equation}
Thus, when $u_0$ is sufficiently small there exists a $\rho_m>0$ such that
$\rho_1(s)>\rho_m$.
Therefore, there must exists a $s_*>0$ such that $h(\rho_1(s_*), s_*)=0$. By Lemma \ref{lem1} we also have $u_1(s_*)=0$.
We then complete the proof of this lemma.
\end{proof}

Therefore, the self-similar solution of the problem (\ref{AE}), (\ref{IBV}) for this case has the form
$$
(u, \rho)(x, t)=\left\{
                 \begin{array}{ll}
                   (u_1, \rho_1)(s), & \hbox{$s<s_*$,} \\[4pt]
                   \big(0, \rho_1(s_*)\big), & \hbox{$s>s_*$;}
                 \end{array}
               \right.
$$
where $s=t/x$.
This is a continuous solution with a quiet constant state; see Figure \ref{fig2}(right).

\begin{figure}[htbp]
\begin{center}
\includegraphics[scale=0.4]{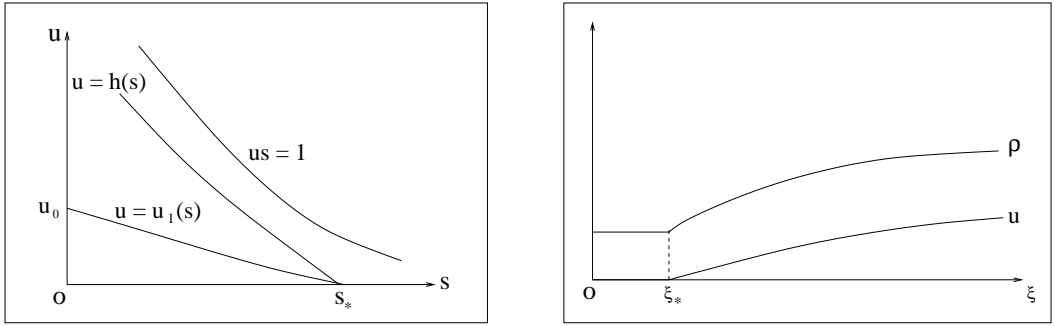}
\caption{\footnotesize Continuous solution with a quiet constant state.}
\label{fig2}
\end{center}
\end{figure}

\begin{lem}\label{lemcase31}
If the first case happens as $(u, \rho)(0)=(u^0, \rho^0)$, then there exists a sufficiently small $\varepsilon>0$ such that the first case will happen as $(u, \rho)(0)\in({u}^0-\varepsilon, {u}^0+\varepsilon)\times\{\rho^0\}$.
\end{lem}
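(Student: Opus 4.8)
The plan is to show that the ``first case''---the trajectory of (\ref{ODE2}), (\ref{ID1}) reaching a vacuum ($\rho_1\to 0$) at a finite $s_*$ with $u_1\to 1/s_*$---is stable under small changes of $u_0$, by splitting the trajectory at an interior time $\bar s<s_*$: on $[0,\bar s]$ I invoke continuous dependence on initial data, and on the remaining interval I use the monotone estimate (\ref{91201}) to drive the perturbed solution into the vacuum. Write $(u_1,\rho_1)(s)$ for the Case-1 solution associated with $(u^0,\rho^0)$; by hypothesis $\rho_1(s)\to 0$ and $u_1(s)\to 1/s_*$ as $s\to s_*^-$, and $s^2p'(\rho_1)-(1-u_1s)^2<0$ on $(0,s_*)$.

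First I would fix a good intermediate time. Since the denominator $D:=s^2p'(\rho_1)-(1-u_1s)^2$ is continuous with $D<0$ on $(0,s_*)$ and $D\to -1$ as $s\to 0^+$, it is bounded away from $0$ on every $[0,\bar s]$ with $\bar s<s_*$; hence the vector field in (\ref{ODE2}) is smooth there and its solutions depend continuously on $u_0$. Using assumption (A1), which makes $\sqrt{p'(\rho)}/\rho$ integrable near $\rho=0$, I first choose $\rho_\dagger>0$ with $\int_0^{2\rho_\dagger}\frac{\sqrt{p'(\rho)}}{\rho}\,{\rm d}\rho<\frac{1}{6s_*}$, and then choose $\bar s<s_*$ with $\rho_1(\bar s)<\rho_\dagger$ and $u_1(\bar s)>\frac{1}{2s_*}$ (possible since $\rho_1(\bar s)\to0$, $u_1(\bar s)\to 1/s_*$). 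Continuous dependence on $[0,\bar s]$ then yields $\varepsilon>0$ such that, for every $u_0\in(u^0-\varepsilon,u^0+\varepsilon)$ with $\rho_0=\rho^0$, the corresponding solution stays in $\{D<0\}$ on $[0,\bar s]$ and satisfies $\rho_1(\bar s)<2\rho_\dagger$ and $u_1(\bar s)>\frac{1}{3s_*}$.

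Next I would carry this perturbed trajectory to the vacuum. While it remains in $\{D<0,\ \rho_1>0,\ 0<u_1<h\}$, Lemmas \ref{lem3.1} and \ref{lem1} apply (these hold for arbitrary data), giving $u_1s<1$, monotone decrease of $u_1,\rho_1$, and $\frac{{\rm d}\rho_1}{{\rm d}u_1}>\frac{\rho_1}{\sqrt{p'(\rho_1)}}$. Integrating as in (\ref{91201}), the drop in $u_1$ needed to bring $\rho_1$ from $\rho_1(\bar s)$ down to $0$ is at most $\int_0^{2\rho_\dagger}\frac{\sqrt{p'(\rho)}}{\rho}\,{\rm d}\rho<\frac{1}{6s_*}$, so $u_1$ stays above $\frac{1}{3s_*}-\frac{1}{6s_*}=\frac{1}{6s_*}>0$. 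From $u_1<h=\frac{1}{s}-\sqrt{p'(\rho_1)}\le\frac{1}{s}$ this forces $s<6s_*$, so the perturbed trajectory lives on a bounded $s$-interval with $u_1,\rho_1$ monotone; its limit lies on the boundary of the region, where $\rho_1=0$ or $u_1=h$. Lemma \ref{lem1} excludes $u_1=h$ with $\rho_1>0,\ h>0$, and $u_1\ge\frac{1}{6s_*}>0$ excludes $h=0$ (equivalently $u_1=0$); hence $\rho_1\to0$ at a finite time, i.e.\ the first case happens for $u_0$.

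The \emph{main obstacle} is the singular endpoint $s_*$, where $D$, $1-u_1s$, and $p'(\rho_1)$ all vanish simultaneously, so continuous dependence cannot be applied directly up to $s_*$. The device that resolves this is to stop at $\bar s<s_*$, where the problem is regular, and then let the monotonicity of Lemma \ref{lem3.1} together with the (A1)-integrability of $\sqrt{p'(\rho)}/\rho$ near $\rho=0$ propagate the perturbed solution the rest of the way into the vacuum while keeping $u_1$ bounded below; the resulting bound $s<6s_*$ is precisely what rules out the perturbed trajectory drifting into the second or third case.
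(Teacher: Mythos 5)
Your proof is correct and follows essentially the same route as the paper: stop at a regular intermediate time $\bar s<s_*$, apply continuous dependence there, and then use assumption (A1) together with the integrated inequality (\ref{91201}) to keep $u_1$ bounded below by a positive constant, which (via $u_1<1/s$ and Lemmas \ref{lem3.1} and \ref{lem1}) forces the perturbed trajectory into the vacuum rather than into the second or third case. The only difference is bookkeeping of constants, plus your explicit bound $s<6s_*$ ruling out the global (third) case, which the paper leaves implicit.
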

\begin{proof}
Denote by $(\bar{u}, \bar{\rho})(s)$ the solution of system (\ref{ODE2}) with
the initial data $(u, \rho)(0)=(u^0, \rho^0)$. Then there exists a $\bar{s}_*>0$ such that $\bar{\rho}(\bar{s}_*)=0$ and $\bar{u}(\bar{s}_{*})=h(\bar{\rho}(\bar{s}_{*}), \bar{s}_{*})=1/\bar{s}_{*}$.

From assumption (A1), we can find a sufficiently small $\delta\in (0, 1/\bar{s}_{*})$ such that
\begin{equation}\label{91203}
\int_{0}^{\delta}\frac{\sqrt{p'(\rho)}}{\rho}~{\rm d}\rho~<~\frac{1}{2\bar{s}_{*}}.
\end{equation}

Since $\bar{\rho}(s)$ is continuous on $[0, \bar{s}_{*}]$, there exists a sufficiently small $\eta>0$ such that
\begin{equation}
\bar{\rho}(\bar{s}_{*}-\eta)<\frac{\delta}{2}.
\end{equation}
When $\varepsilon>0$ is sufficiently small the solution $(u, \rho)(s)$ of system (\ref{ODE2}) with the initial data
 $(u, \rho)(0)~\in~ ({u}^0-\varepsilon, {u}^0+\varepsilon)\times\{\rho^0\}$ satisfies
\begin{equation}\label{8506}
|\rho(\bar{s}_{*}-\eta)-\bar{\rho}(\bar{s}_{*}-\eta)|<\frac{\delta}{4}\quad\mbox{and}\quad
|u(\bar{s}_{*}-\eta)-\bar{u}(\bar{s}_{*}-\eta)|<\frac{\delta}{4}.
\end{equation}

It is similar to (\ref{91201}) that
$$
\int_{\rho(s)}^{\rho(\bar{s}_{*}-\eta)}\frac{\sqrt{p'(\rho)}}{\rho}~{\rm d}\rho~>~\int_{u(s)}^{u(\bar{s}_{*}-\eta)}~{\rm d}u=u(\bar{s}_{*}-\eta)-u(s)
$$
as $s>\bar{s}_{*}-\eta$.
Combining this with (\ref{8506}), we get
$$
\begin{aligned}
\int_{\rho(s)}^{\delta}\frac{\sqrt{p'(\rho)}}{\rho}~{\rm d}\rho~&>~\bar{u}(\bar{s}_{*}-\eta)-\frac{\delta}{4}-u(s)>~\bar{u}(\bar{s}_{*})-\frac{\delta}{4}-u(s)\\&~=~\frac{1}{\bar{s}_{*}}-\frac{\delta}{4}-u(s)~>~
\frac{3}{4\bar{s}_{*}}-u(s)
\end{aligned}
$$
as $s>\bar{s}_{*}-\eta$.
Thus, by (\ref{91203}) and Lemmas \ref{lem3.1} and \ref{lem1} we know that
 there exists a $s_{*}$ such that $u(s)<h(\rho(s), s)$ as $0<s<s_{*}$ and $u(s_*)=h(\rho(s_*), s_*)=1/s_*$. We then have this lemma.
\end{proof}

\begin{lem}\label{lemcase32}
 If the second case happens as $(u, \rho)(0)=(u^0, \rho^0)$, then there exists a sufficiently small $\varepsilon>0$ such that the second case will happen as $(u, \rho)(0)\in({u}^0-\varepsilon, {u}^0+\varepsilon)\times\{\rho^0\}$.
\end{lem}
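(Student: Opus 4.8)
The plan is to follow the scheme of Lemma~\ref{lemcase31}, interchanging the roles played by the vacuum and the quiet constant state. Denote by $(\bar{u},\bar{\rho})(s)$ the solution of (\ref{ODE2}) with initial data $(u,\rho)(0)=(u^0,\rho^0)$; by the hypothesis that the second case occurs, there is a $\bar{s}_*>0$ with $\bar{u}(\bar{s}_*)=h(\bar{\rho}(\bar{s}_*),\bar{s}_*)=0$, with $0<\bar{u}(s)<h(\bar{\rho}(s),s)$ for $0<s<\bar{s}_*$, and with $\bar{\rho}_*:=\bar{\rho}(\bar{s}_*)>0$. The crucial difficulty is that $\bar{s}_*$ is a singular point of (\ref{ODE2}): there $u=h$ forces $s^2p'(\rho)-(1-us)^2=0$, while simultaneously $u=0$ makes both numerators vanish, so that the right-hand sides of (\ref{ODE2}) are of the form $0/0$ and continuous dependence on the initial data cannot be invoked up to $s=\bar{s}_*$. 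I will therefore stop slightly short of $\bar{s}_*$ and propagate past the singular neighbourhood by a monotonicity estimate.

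First I would fix a small $\eta>0$ and set $s_1=\bar{s}_*-\eta$. On the compact interval $[0,s_1]$ the solution $(\bar{u},\bar{\rho})$ is regular and satisfies $0<\bar{u}<h$ with $\bar{\rho}\geq\bar{\rho}(s_1)$, and by choosing $\eta$ small the quantity $\bar{u}(s_1)>0$ becomes as small as needed while $\bar{\rho}(s_1)$ stays close to $\bar{\rho}_*$. For $\varepsilon>0$ small, continuous dependence on the initial data shows that the solution $(u,\rho)(s)$ issuing from any $(u,\rho)(0)\in(u^0-\varepsilon,u^0+\varepsilon)\times\{\rho^0\}$ remains so close to $(\bar{u},\bar{\rho})$ on $[0,s_1]$ that it still obeys $0<u(s)<h(\rho(s),s)$ there, with $\rho(s_1)>\bar{\rho}_*/2$ and $u(s_1)$ small. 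The heart of the argument is then the estimate already used in (\ref{91201}) and (\ref{91204}): as long as $0<u<h$ (equivalently $0<us<1$ and $s^2p'(\rho)-(1-us)^2<0$), (\ref{ODE2}) gives
$$
\frac{{\rm d}\rho}{{\rm d}u}=\frac{\rho(1-us)}{p'(\rho)s}<\frac{\rho}{s_1 p'(\rho)}\quad\mbox{as}\quad s>s_1,
$$
whence, integrating,
$$
\int_{\rho(s)}^{\rho(s_1)}\frac{p'(\rho)}{\rho}\,{\rm d}\rho<\frac{u(s_1)-u(s)}{s_1}<\frac{u(s_1)}{s_1}.
$$
Since the left-hand side increases to $\int_{0}^{\rho(s_1)}\frac{p'(\rho)}{\rho}\,{\rm d}\rho$ as $\rho(s)\downarrow 0$, and the right-hand side can be made smaller than this fixed positive number by taking $\eta$ (hence $u(s_1)$) small, there is a $\rho_m>0$ with $\rho(s)>\rho_m$ for all admissible $s>s_1$. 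This bounds the density away from the vacuum, thereby excluding the first case (where $\rho$ reaches $0$ at finite $s$) and the third case (where $h>0$ for all $s$ forces $\sqrt{p'(\rho)}<1/s\to0$).

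Finally, because $\rho(s)\in[\rho_m,\rho_0]$ the continuous positive function $p'(\rho)$ is bounded below by some $c^2>0$, so $h(\rho(s),s)=\frac{1}{s}-\sqrt{p'(\rho(s))}\leq\frac{1}{s}-c$ becomes negative for $s$ large. As $h$ is positive for small $s$, it must vanish at some finite $s_*>s_1$ with $\rho(s_*)\geq\rho_m>0$; and since $0<u(s)<h(\rho(s),s)$ holds on $(s_1,s_*)$ by Lemmas~\ref{lem3.1} and \ref{lem1}, letting $s\to s_*^-$ squeezes $u(s_*)=h(\rho(s_*),s_*)=0$. Thus the perturbed solution again realizes the second case, which proves the lemma. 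I expect the only genuine obstacle to be the degeneracy of (\ref{ODE2}) at $\bar{s}_*$ flagged above; once it is bypassed by stopping at $s_1$ and using the monotone integral bound to keep $\rho$ away from $0$, the remainder is a routine squeeze.
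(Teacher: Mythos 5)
Your proposal is correct and follows essentially the same route as the paper: stop at $\bar{s}_*-\eta$ where $\bar{u}$ is already small, use continuous dependence on the compact interval, then integrate the bound ${\rm d}\rho/{\rm d}u<\rho/(s_1 p'(\rho))$ exactly as in (\ref{91204}) to keep $\rho$ above a positive $\rho_m$, forcing $h$ to vanish at a finite $s_*$ with $u(s_*)=h=0$ by Lemma \ref{lem1}. The paper phrases the smallness condition via the constant $\mathcal{N}=\int_0^{\rho_*/2}\frac{p'(\rho)}{\rho}\,{\rm d}\rho$ rather than your limit argument, but the content is identical.
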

\begin{proof}
 Denote by $(\bar{u}, \bar{\rho})(s)$ the solution of system (\ref{ODE2}) with
the initial data $(u, \rho)(0)=(u^0, \rho^0)$. Then there exists a $\bar{s}_*>0$ such that $\bar{u}(\bar{s}_*)=0$ and $\bar{\rho}(\bar{s}_{*})=\rho_{*}>0$.

Let $$\mathcal{N}=\int_{0}^{\frac{\rho_*}{2}}\frac{p'(\rho)}{\rho}~{\rm d} \rho.$$
There exists a sufficiently small $\eta<\frac{\bar{s}_*}{2}$ such that
\begin{equation}\label{100404}
0<\bar{u}(\bar{s}_{*}-\eta)<\frac{\mathcal{N}\bar{s}_*}{4}.
\end{equation}

It is easy to see that if $\varepsilon>0$ is sufficiently small, then the solution $(u, \rho)(s)$ of  system (\ref{ODE2}) with the initial data
 $(u, \rho)(0)~\in~ ({u}^0-\varepsilon, {u}^0+\varepsilon)\times\{\rho^0\}$ satisfies
\begin{equation}\label{100403}
|\rho(\bar{s}_{*}-\eta)-\bar{\rho}(\bar{s}_{*}-\eta)|<\frac{\rho_*}{4}\quad\mbox{and}\quad
|u(\bar{s}_{*}-\eta)-\bar{u}(\bar{s}_{*}-\eta)|<\frac{\mathcal{N}\bar{s}_*}{4}.
\end{equation}

It is similar to (\ref{91204}) that
\begin{equation}
\int_{\rho(s)}^{\rho(\bar{s}_{*}-\eta)}\frac{p'(\rho)}{\rho}~{\rm d}\rho~<~\frac{1}{\bar{s}_{*}-\eta}\int_{u(s)}^{u(\bar{s}_{*}-\eta)}{\rm d}u\quad \mbox{as}\quad s>\bar{s}_{*}-\eta.
\end{equation}
Combining this with (\ref{100403}) we get
$$
\int_{\rho(s)}^{\frac{3\rho_*}{4}}\frac{p'(\rho)}{\rho}~{\rm d}\rho~<~\frac{1}{\bar{s}_{*}-\eta}\int_{u(s)}^{\frac{\mathcal{N}\bar{s}_*}{2}}{\rm d}u~<~\mathcal{N},
$$
since $\rho'(s)<0$.
Thus, by (\ref{100404}) we know that there exists a $\rho_m>0$ such that $\rho(s)>\rho_m$ as $s>\bar{s}_{*}-\eta$.
Consequently, there exists a $s_{*}>\bar{s}_{*}-\eta$ such that $h(\rho(s_*), s_{*})=0$.
We then have this lemma.
\end{proof}

Using Lemmas \ref{lemcase31} and \ref{lemcase32} and the argument of continuity, we know that for any given $\rho_0>0$ there exists a $u_0>0$ such that the solution $(u_1, \rho_1)(s)$  of the initial value problem (\ref{ODE2}), (\ref{ID1}) satisfies $0<u_1(s)<h(\rho_1(s), s)<1/s$ as $s>0$; see Figure \ref{fig3}(left). That is to say, the initial value problem (\ref{ODE2}), (\ref{ID1}) has a global classical solution. Moreover, this solution satisfies $\lim\limits_{s\rightarrow +\infty}u_1(s)=\lim\limits_{s\rightarrow +\infty}\rho_1(s)=0$. In this case, the initial-boundary value problem (\ref{AE}), (\ref{IBV}) has a self-similar smooth solution $(u, \rho)(x, t)=(u_1, \rho_1)(t/x)$; see Figure \ref{fig3}(right).

\begin{figure}[htbp]
\begin{center}
\includegraphics[scale=0.38]{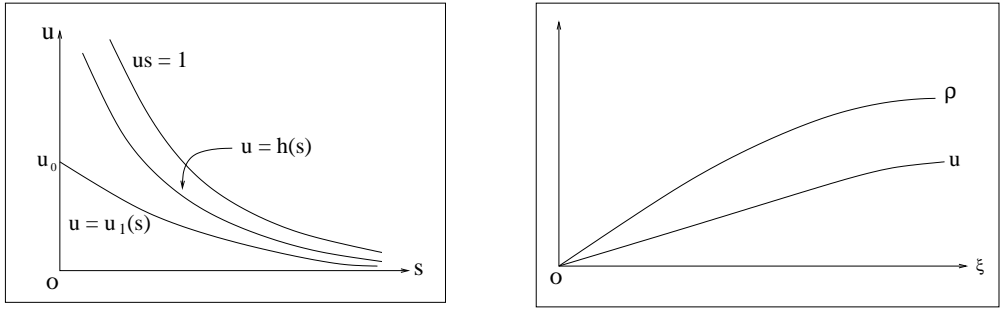}
\caption{\footnotesize A global smooth self-similar solution.}
\label{fig3}
\end{center}
\end{figure}

\subsection{Equation of state II}

\subsubsection{$\tau_0\geq\tau_2^i$}
The discussion is similar to that of section 3.1, since $\tau_1'(s)>0$ as $s>0$  and $p''(\tau)>0$ as $\tau<\tau_0$.

\subsubsection{$\tau_1^i\leq \tau_0<\tau_2^i$}
There are the following four cases:
\begin{itemize}
   \item There exists a $s_*>0$ such that  $0<u_1(s)<h(\rho_1(s), s)$ as $0<s<s_*$ and $h(\rho_1(s_*), s_*)=u_1(s_*)=\frac{1}{s_{*}}$.
  \item There exists a $s_{*}>0$ such that  $0<u_1(s)<h(\rho_1(s), s)$ as $0<s<s_*$ and $u_1(s_*)=h(\rho_1(s_*), s_*)=0$.
  \item $0<u_1(s)<h(\rho_1(s), s)$ for all $s>0$.
\item There exists a $s_*>0$ such that $0<u_1(s)<h(\rho_1(s), s)$ as $0<s<s_*$ and $0<u_1(s_*)=h(\rho_1(s_*), s_*)<\frac{1}{s_{*}}$. (By Lemma \ref{lem1}, there holds $\tau_1(s_*)\in (\tau_0, \tau_2^i)$ in this case.)
\end{itemize}
It is easy to see that the first three cases can be happened when $\tau_0$ is sufficiently close to $\tau_2^i$.
And the discussions for these three cases are similar to that of section 3.1.

In what follows, we are going to discuss the forth case. We first show that the forth case can be happened at least in some cases. To confirm this, we consider the initial value problem
\begin{equation}\label{ODE3}
\left\{
  \begin{aligned}
   &\frac{{\rm d} s}{{\rm d} u}=\frac{s^2 p'(\rho)-(1-us)^2}{2p'(\rho) us},  \\
     &\frac{{\rm d} \rho}{{\rm d} u}=\frac{\rho (1-us)}{2p'(\rho)s},
  \end{aligned}
\right.
\end{equation}
\begin{equation}\label{ID21}
(s, \rho)\mid_{u=u_*}~=~(s_*, \rho_*),
\end{equation}
where $u_*>0$, $\rho_*>0$, and $s_*>0$ satisfy $s_*^2p'(\rho_*)-(1-u_* s_*)^2=0$, $u_*s_*<1$ and $\tau_2^i<1/\rho_*<\tau_2^i$.

\begin{lem}\label{12101}
When $\delta>0$ is sufficiently small, the initial value problem (\ref{ODE3}), (\ref{ID21}) has a solution $(\hat{s}, \hat{\rho})(u)$ on $(u_*, u_*+\delta)$. Moreover, this solution satisfies $\frac{{\rm d} \hat{s}}{{\rm d} u}<0$,  $\frac{{\rm d} \hat{\rho}}{{\rm d} u}<0$, and $\frac{{\rm d}[\hat{s}^2 p'(\hat{\rho})-(1-u\hat{s})^2]}{{\rm d} u}<0$ as $u\in(u_*, u_*+\delta)$.
\end{lem}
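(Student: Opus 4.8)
The plan is to dispatch local existence with the standard ODE theorem and then read the three sign statements off the behaviour of the right-hand side of (\ref{ODE3}) near the sonic point $(u_*,s_*,\rho_*)$. The sonic relation $s_*^2p'(\rho_*)=(1-u_*s_*)^2$ forces $p'(\rho_*)>0$, so both denominators $2p'(\rho)us$ and $2p'(\rho)s$ in (\ref{ODE3}) are bounded away from zero in a neighbourhood of $(u_*,s_*,\rho_*)$; hence the field is $C^1$ there and Picard--Lindel\"of yields a unique solution $(\hat s,\hat\rho)(u)$ on $(u_*-\delta,u_*+\delta)$, from which I keep the right half. Writing $Q(u):=\hat s^2p'(\hat\rho)-(1-u\hat s)^2$, the numerator of the first equation vanishes at $u_*$, so $\hat s'(u_*)=Q(u_*)/(2p'(\rho_*)u_*s_*)=0$.

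\emph{The sign of $Q'$ (the third assertion) is the crux.} Differentiating $Q$ and using $\hat s'(u_*)=0$ to drop two terms gives $Q'(u_*)=s_*^2p''(\rho_*)\hat\rho'(u_*)+2(1-u_*s_*)s_*$. Substituting $\hat\rho'(u_*)$ from (\ref{ODE3}), the sonic relation in the form $1-u_*s_*=s_*\sqrt{p'(\rho_*)}$, and the conversion formulas $p'(\rho)=-\tau^2p'(\tau)$, $p''(\rho)=2\tau^3p'(\tau)+\tau^4p''(\tau)$ already used in the proof of Lemma~\ref{lem1} (which yield the identity $\rho\,p''(\rho)+2p'(\rho)=\tau^3p''(\tau)$), I expect $Q'(u_*)$ to collapse to a positive multiple of $\tau_*^3p''(\tau_*)$, where $\tau_*=1/\rho_*$. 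Because $\tau_*\in(\tau_1^i,\tau_2^i)$ lies in the nonconvex range, $p''(\tau_*)<0$, so $Q'(u_*)<0$, and by continuity $Q'<0$ on $(u_*,u_*+\delta)$ for $\delta$ small. Carrying out this reduction cleanly---turning the abstract nonconvexity hypothesis into the single usable factor $p''(\tau_*)$---is the main obstacle.

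\emph{The monotonicity of $\hat s$ (the first assertion) then follows.} Since $Q(u_*)=0$ and $Q'<0$ just to the right of $u_*$, we have $Q(u)<0$ on $(u_*,u_*+\delta)$; then $\hat s'(u)=Q(u)/(2p'(\hat\rho)u\hat s)$ is a negative quantity over a positive one, giving $\hat s'<0$ (equivalently $\hat s''(u_*)=Q'(u_*)/(2p'(\rho_*)u_*s_*)<0$ together with $\hat s'(u_*)=0$). The existence of a suitable $\delta$ is inherited from the $\delta$ produced by Picard--Lindel\"of intersected with the continuity interval for $Q'<0$.

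\emph{The density assertion.} The second equation of (\ref{ODE3}) shows $\hat\rho'(u)$ has the same sign as $1-u\hat s$, since $\hat\rho,p'(\hat\rho),\hat s>0$; thus the asserted $\frac{d\hat\rho}{du}<0$ is equivalent to the solution remaining on the branch $u\hat s>1$ of the sonic set. I would pin this sign down by following $1-u\hat s$ through $\frac{d}{du}(1-u\hat s)=-(\hat s+u\hat s')$, using $\hat s'<0$ from the previous step, starting from the sonic value $1-u_*s_*=s_*\sqrt{p'(\rho_*)}$. Reconciling the evolution of this factor with the stated monotonicity $\frac{d\hat\rho}{du}<0$ is the remaining delicate point: it is governed entirely by the position of the trajectory relative to the line $u\hat s=1$, and it should be fixed in tandem with the choice of $\delta$ used in the first three steps.
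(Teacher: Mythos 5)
Your handling of local existence, of $\frac{{\rm d}\hat s}{{\rm d}u}<0$, and of $\frac{{\rm d}}{{\rm d}u}\bigl[\hat s^2p'(\hat\rho)-(1-u\hat s)^2\bigr]<0$ is essentially the paper's own proof (the paper drops the same two terms by noting $Q(u_*)=0$, which is equivalent to your $\hat s'(u_*)=0$ since their prefactor is $Q/(2p'(\hat\rho)u\hat s)$), and your algebra is in fact the \emph{corrected} version of it. Note that the second equation of (\ref{ODE3}) as printed carries a spurious factor $2$: dividing the two equations of (\ref{ODE2}) gives ${\rm d}\rho/{\rm d}u=\rho(1-us)/\bigl(p'(\rho)s\bigr)$, the formula the paper itself uses just before (\ref{91201}). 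With this corrected rate, your identity $\rho p''(\rho)+2p'(\rho)=\tau^3p''(\tau)$ makes the collapse happen exactly as you anticipate:
\begin{equation*}
Q'(u_*)=s_*^2p''(\rho_*)\hat\rho'(u_*)+2(1-u_*s_*)s_*
=\frac{s_*(1-u_*s_*)}{p'(\rho_*)}\,\tau_*^3p''(\tau_*)<0,
\end{equation*}
since $\tau_*\in(\tau_1^i,\tau_2^i)$ (the condition ``$\tau_2^i<1/\rho_*<\tau_2^i$'' in (\ref{ID21}) is a typo for this). With the printed factor $2$ the bracket would instead be $\rho p''(\rho)+4p'(\rho)=\tau^3p''(\tau)-2\tau^2p'(\tau)$, whose sign is indeterminate in the spinodal region, so the correction is not optional; the paper's displayed answer $\frac{2\hat s\hat\tau^3p''(\hat\tau)}{p'(\hat\rho)}(1-u\hat s)$ is likewise off by a positive constant but has the right sign. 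From $Q(u_*)=0$ and $Q'<0$ you correctly get $Q<0$ and hence $\hat s'<0$ on $(u_*,u_*+\delta)$.

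The ``remaining delicate point'' you flag for the density is not something you can reconcile: carried to completion, your own reduction disproves the printed inequality. Since $1-u_*s_*=s_*\sqrt{p'(\rho_*)}>0$, continuity keeps $1-u\hat s>0$ on $(u_*,u_*+\delta)$ for $\delta$ small (indeed $u\hat s$ starts strictly below $1$), so the trajectory cannot lie on the branch $u\hat s>1$, and the $\rho$-equation forces $\frac{{\rm d}\hat\rho}{{\rm d}u}>0$ --- the opposite of the lemma's claim. The paper's proof commits exactly this slip: it deduces ``$\frac{{\rm d}\hat\rho}{{\rm d}u}<0$'' as a consequence of $1-u\hat s>0$, which in fact gives the reverse sign. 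The corrected statement, $\hat\rho$ increasing (equivalently $\hat\tau$ decreasing) in $u$, is moreover what the application requires: it yields $\rho_0=\hat\rho(u^*)>\rho_*$, i.e.\ $\tau_1(s_*)\in(\tau_0,\tau_2^i)$, which is precisely the parenthetical in the fourth case of section 3.2.2. So do not spend effort tuning $\delta$ to force $u\hat s>1$; record $\frac{{\rm d}\hat\rho}{{\rm d}u}>0$ instead and observe that nothing downstream uses the printed sign.
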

\begin{proof}
It is easy to see that this initial value problem is a classically well-posed problem which has a unique local solution $(\hat{s}, \hat{\rho})(u)$.

By computation, we have
$$
\begin{aligned}
 &\frac{{\rm d}}{{\rm d} u}\big(\hat{s}^2 p'(\hat{\rho})-(1-u\hat{s})^2\big)\\=~&\big(2\hat{s} p'(\hat{\rho})+2u(1-u\hat{s})\big)\cdot\Big(\frac{\hat{s}^2 p'(\hat{\rho})-(1-u\hat{s})^2}{2p'(\hat{\rho}) u\hat{s}}\Big)+\hat{s}^2p''(\hat{\rho})\frac{{\rm d} \hat{\rho}}{{\rm d}u}+2\hat{s}(1-u\hat{s})\\=~&
 \big(2\hat{s} p'(\hat{\rho})+2u(1-u\hat{s})\big)\cdot\Big(\frac{\hat{s}^2 p'(\hat{\rho})-(1-u\hat{s})^2}{2p'(\hat{\rho}) u\hat{s}}\Big)+\frac{2\hat{s}\hat{\tau}^3p''(\hat{\tau})}{p'(\hat{\rho})}(1-u\hat{s})~<~0\quad \mbox{as}\quad u=u_*.
\end{aligned}
$$
 Hence, we have $\hat{s}^2 p'(\hat{\rho})-(1-u\hat{s})^2<0$ and $\frac{{\rm d}[\hat{s}^2 p'(\hat{\rho})-(1-u\hat{s})^2]}{{\rm d} u}<0$ as $u\in(u_*, u_*+\delta)$.

Moreover, by $s_*^2p'(\rho_*)-(1-u_* s_*)^2=0$ and $u_*s_*<1$  we have $1-u\hat{s}>0$ as $u\in(u_*, u_*+\delta)$.
Consequently, we have $\frac{{\rm d} \hat{s}}{{\rm d} u}<0$ and  $\frac{{\rm d} \hat{\rho}}{{\rm d} u}<0$ as $u\in(u_*, u_*+\delta)$.
\end{proof}

 When $p''(\tau)<0$ and $p'(\tau)<0$ we have $p''(\rho)=2\tau^3p'(\tau)+\tau^4p''(\tau)<0$. Hence, in view of Lemma \ref{12101}, there may exist a $u^*>u_*$ such that $\hat{s}(u^*)=0$ and $\tau_1^i<\frac{1}{\hat{\rho}(u^*)}<\tau_2^i$,  at least for some equations of state. Therefore, if we take $u_0=u^*$ and $\rho_0=\hat{\rho}(u^*)$ then the forth case will happen.

\vskip 4pt

We next construct the solution for the forth case.
From $s_*^2p'(\rho_1(s_*))-(1-u_1(s_*) s_*)^2=0$,
 we have $\lim\limits_{s\rightarrow s_*^{-}}\frac{{\rm d} u_1}{{\rm d}s}=-\infty$ and $\lim\limits_{s\rightarrow s_*^{-}}\frac{{\rm d} \rho_1}{{\rm d}s}=-\infty$; see Figure \ref{fig4}(left). This implies that the problem (\ref{AE}), (\ref{IBV}) does not have a global continuous solution. So, we need to look for a discontinuous solution.

Since $s^2p'(\rho_1)-(1-u_1 s)^2<0$ and $0<u_1s<1$ as $0<s<s_*$, we have
\begin{equation}\label{102401}
\frac{1}{s}~>~u_1(s)+\sqrt{p'(\rho_1(s))}~=~u_1(s)+\tau_1(s)\sqrt{-p'(\tau_1(s))}\quad \mbox{as}\quad0<s<s_*.
\end{equation}

We first consider the possibility of finding a compression shock wave solution.
By (\ref{102401}) and Corollary \ref{cor2} we know that for any $0<s<s_*$, there exists an admissible forward compression shock with the speed $1/s$ and the front side state $(u_1, \rho_1)(s)$. Moreover, the backside state $(u_2, \tau_2)(s)$ can be uniquely determined by
\begin{equation}\label{backside}
\left\{
  \begin{aligned}
  &\frac{1}{s}=u_1(s)+\tau_1(s)\sqrt{-\frac{p(\tau_2(s))-p(\tau_1(s))}{\tau_2(s)-\tau_1(s)}},\\
   &u_2(s)=u_1(s)+(\tau_1(s)-\tau_2(s))\sqrt{-\frac{p(\tau_2(s))-p(\tau_1(s))}{\tau_2(s)-\tau_1(s)}}.
  \end{aligned}
\right.
\end{equation}
Moreover, we have $u_2(s)>u_1(s)>0$ since $\tau_2(s)<\tau_1(s)$. By the entropy condition, we have
$$
u_2(s)-\sqrt{p'(\rho_2(s))}~<~\frac{1}{s}~<~u_2(s)+\sqrt{p'(\rho_2(s))},
$$
and consequently
\begin{equation}\label{122501}
\frac{1}{s}-\sqrt{p'(\rho_2(s))}<u_2(s)<\frac{1}{s}+\sqrt{p'(\rho_2(s))}.
\end{equation}
We now assume there exists an admissible forward compression shock with the speed $1/s_1$ and the front side state $(u_1, \rho_1)(s_1)$, where $s_1\in (0, s_*)$.
 Then we consider system (\ref{ODE2}) with the data
\begin{equation}\label{100501}
(u, \rho)\mid_{s=s_1}~ =~ (u_2, \rho_2)(s_1).
\end{equation}
We have the following lemma:
\begin{lem}\label{100502}
There exists a $s^{*}>s_1$ such that the solution $(u_3, \rho_3)(s)$ of the initial value problem (\ref{ODE2}), (\ref{100501}) satisfies
\begin{equation}\label{100506}
\frac{1}{s}-\sqrt{p'(\rho_3(s))}<u_3(s)<\frac{1}{s}+\sqrt{p'(\rho_3(s))}\quad \mbox{as}\quad s_1<s<s^{*}
\end{equation}
and $u(s^{*})=\frac{1}{s^{*}}+\sqrt{p'(\rho(s^{*}))}>\frac{1}{s_*}$.
\end{lem}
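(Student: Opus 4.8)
The plan is to follow the trajectory $(u_3,\rho_3)(s)$ forward from $s_1$, exploiting that by (\ref{122501}) the Cauchy datum $(u_2,\rho_2)(s_1)$ lies \emph{strictly inside} the band
\[
\frac1s-\sqrt{p'(\rho)}\;<\;u\;<\;\frac1s+\sqrt{p'(\rho)},
\]
where the denominator $D:=s^2p'(\rho_3)-(1-u_3s)^2$ is positive and the field in (\ref{ODE2}) is smooth; hence a local solution exists and remains in the band just beyond $s_1$. First I would record the monotonicities that drive everything. In the band $\frac{{\rm d}u_3}{{\rm d}s}=\frac{2p'(\rho_3)u_3s}{D}>0$, and
\[
\frac{{\rm d}(u_3s)}{{\rm d}s}=u_3\Big(1+\frac{2p'(\rho_3)s^2}{D}\Big)\;\ge\;u_3\;\ge\;u_2(s_1)\;>\;0 .
\]
From the backside relations (\ref{backside}) one gets $1/s_1-u_2(s_1)=\tau_2(s_1)\sqrt{-\frac{p_2-p_1}{\tau_2-\tau_1}}>0$, so $u_3s$ starts below $1$ and increases at a rate bounded below by the positive constant $u_2(s_1)$.

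While $u_3s<1$ (the lower half of the band) we have $1-u_3s>0$, hence $\frac{{\rm d}\rho_3}{{\rm d}s}>0$: $\rho_3$ increases and $\tau_3$ decreases, staying in the convex range $\tau_3\le\tau_2(s_1)<\tau_1^i$ (the backside $\tau_2(s_1)$ lies in the convex region by the compression-shock geometry of Corollary \ref{cor2}). Moreover the solution cannot leave the band through the lower boundary $u=h$: the computation underlying Lemma \ref{lem1}, applied to $u_3-h$ approached from above, gives $\frac{{\rm d}(u_3-h)}{{\rm d}s}\to+\infty$ there (the leading term is $u_3s\,\tau_3^3p''(\tau_3)/D>0$ with $D\to0^+$), so that boundary is repelling. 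Since $(u_3s)'\ge u_2(s_1)$, the quantity $u_3s$ reaches the value $1$ at some finite $s_c>s_1$, and $(u_3,\rho_3)$ stays in the band with $D>0$ on $(s_1,s_c]$.

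For $s>s_c$ we are in the upper half of the band, $u_3s>1$, so $1-u_3s<0$ and $\rho_3$ now decreases. I would argue by contradiction that the solution cannot remain in the band for all $s$: the bound $(u_3s)'\ge u_2(s_1)$ forces $u_3s\to+\infty$, whereas $\rho_3$ decreases to some limit $\ell\ge0$; if $\ell>0$ then $\sqrt{p'(\rho_3)}$ stays bounded, and if $\ell=0$ then $\sqrt{p'(\rho_3)}\to0$ by assumption (A1). In either case the band constraint $u_3<\frac1s+\sqrt{p'(\rho_3)}$, together with $1/s\to0$ and the forced growth of $u_3$ (integrating the asymptotic lower bound on $\frac{{\rm d}u_3}{{\rm d}s}$), is eventually violated. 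Hence $D$ vanishes at a finite $s^*>s_c$, and since $u_3s^*>1$ the only available boundary is the upper one, giving $u(s^*)=\frac1{s^*}+\sqrt{p'(\rho(s^*))}$. Reparametrizing by the increasing variable $u$ and computing $\frac{{\rm d}D}{{\rm d}u}$ exactly as in Lemma \ref{12101} confirms that $D$ decreases through $0$ at the exit, and that the exit necessarily occurs where $p''(\tau_3(s^*))\ge0$ (otherwise $\frac{{\rm d}}{{\rm d}s}\big(u_3-\frac1s-\sqrt{p'(\rho_3)}\big)\to-\infty$ would make $\frac1s+\sqrt{p'(\rho_3)}$ unreachable from below).

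It remains to upgrade $u(s^*)=\frac1{s^*}+\sqrt{p'(\rho(s^*))}>\frac1{s^*}$ to the asserted $u(s^*)>\frac1{s_*}$; it suffices to show $s^*\le s_*$, giving $1/s^*\ge1/s_*$. This terminal comparison is the step I expect to be the main obstacle. The front trajectory $(u_1,\rho_1)$ lives in the region $u<h$ where $D<0$, while $(u_3,\rho_3)$ lives in the band where $D>0$; the two are separated by the curve $u=h$ and so cannot be compared by uniqueness alone. One must instead control $\rho_3(s^*)$ — equivalently the location of the exit — through a monotone functional of (\ref{ODE2}) or a refined estimate tying the backside dynamics back to the shock data at $s_1$. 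The monotonicity bookkeeping of the first three paragraphs is routine; the delicate point is precisely this quantitative control of where the upper boundary is met.
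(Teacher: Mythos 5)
Your argument is essentially the paper's own: positivity of the denominator inside the band, impossibility of exiting through the lower sonic boundary via the repelling computation (\ref{100504})--(\ref{100505}), monotone passage of $u_3s$ through the value $1$ after which $\rho_3$ decreases, and, if the upper boundary were never reached, integration of $\frac{{\rm d}u_3}{{\rm d}s}>\frac{2u_3}{s}$ to contradict the boundedness of $u_3$. The only organizational difference is that the paper splits into the cases $u_2(s_1)s_1\ge 1$ and $u_2(s_1)s_1<1$ rather than using your bound $(u_3s)'\ge u_2(s_1)$; the first case is in fact empty, since (\ref{1}) forces $u_2(s_1)<1/s_1$. Concerning the terminal inequality $u(s^{*})>1/s_{*}$ that you single out as the main obstacle: the paper's proof never addresses it --- it stops after the contradiction argument --- so there is no hidden step you failed to find; the inequality would be immediate if one knew $s^{*}\le s_{*}$, but no such comparison appears in the paper, and it is plausible that the intended assertion is only the trivial $u(s^{*})>1/s^{*}$.
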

\begin{proof}
It is easy to see that if $\frac{1}{s}-\sqrt{p'(\rho_3(s))}<u_3(s)<\frac{1}{s}+\sqrt{p'(\rho_3(s))}$ then $s^2p'(\rho_3)-(1-u_3s)^2>0$.
There are two situations: $u_2(s_1)s_1\geq1$ and $u_2(s_1)s_1<1$.

If $u_2(s_1)s_1>1$,  then we have ${\rm d} u_3/{\rm d} s>0$
and ${\rm d} \rho_3/{\rm d} s<0$ as $s>s_1$.

If $u_2(s_1)s_1<1$, then we have $\rho_3'(s_1)>0$.
Using (\ref{100504}), (\ref{100505}), and the fact that $\rho_3'(s)>0$ as $u_3s<1$,
we get $u_3(s)>1/s-\sqrt{p'(\rho_3(s))}$.
Thus there exists a $s_2> s_1$ such that $u_3(s_2)s_2=1$. Moreover, we have ${\rm d} u_3/{\rm d} s>0$
and ${\rm d} \rho_3/{\rm d} s<0$ as $s>s_2$.

If the curves $u=u_3(s)$ and $u=1/s+\sqrt{p'(\rho_3(s))}$ do not intersect with each other,
then there must have
$$\lim\limits_{s\rightarrow +\infty}\rho_3(s)=\rho_{\infty}>0\quad\mbox{and}\quad \lim\limits_{s\rightarrow +\infty}u_3(s)=u_{\infty}>0.$$
Thus, we have
$$
\frac{{\rm d} u_3}{{\rm d} s}=\frac{2p'(\rho_3) u_3s}{s^2 p'(\rho_3)-(1-u_3s)^2}>\frac{2u_3(s)}{s}>\frac{2u_2(s_1)}{s}
$$
which leads to a contradiction.
We then complete the proof of this lemma.
\end{proof}
Lemma \ref{100502} implies that the initial value problem (\ref{ODE2}), (\ref{100501}) does not have a solution on $(s_1, +\infty)$. If follows from (\ref{100506}) that $(u_3, \rho_3)(s)$ ($s_1<s<s^{*}$) can not be the front side state of any admissible forward shock with the speed $1/s$. Therefore, the problem (\ref{AE}), (\ref{IBV}) does not permit a compression shock wave solution.
In what follows, we are going to look for a rarefaction shock wave solution.


For $\tau_1^i<\tau_1(s)<\tau_2^i$ we let $f(\tau_1(s))$ be defined such that
$$
\frac{p(\tau_1)-p(f(\tau_1))}{\tau_1-f(\tau_1)}=p'(f(\tau_1)) \quad \mbox{and}\quad f(\tau_1)>\tau_2^i.
$$
It can be seen that
\begin{equation}\label{102501}
-p'(\tau_1(s))<-p'(f(\tau_1(s)))\quad \mbox{as}~0<s<s_*.
\end{equation}

\begin{lem}\label{lem1701}
There exists a $s_{**}\in (0, s_*)$ such that for any
$s\in [s_{**}, s_{*}]$,
there exists an admissible forward rarefaction shock with the speed $1/s$ and the front side state $(u_1, \rho_1)(s)$.
\end{lem}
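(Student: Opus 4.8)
The statement asserts that for the fourth case (where $\tau_1^i\le\tau_0<\tau_2^i$ and the local smooth solution terminates at $s_*$ with $0<u_1(s_*)=h(\rho_1(s_*),s_*)<1/s_*$), admissible forward rarefaction shocks with speed $1/s$ and front state $(u_1,\rho_1)(s)$ exist for all $s$ in some left-neighborhood $[s_{**},s_*]$ of $s_*$. The approach is to verify the two ingredients an admissible forward rarefaction shock requires: (i) an algebraic compatibility between the prescribed shock speed $1/s$ and the geometry of the front state, ensuring the Rankine--Hugoniot system admits a back state in the rarefaction branch described by Corollary \ref{cor3}; and (ii) the entropy (chord-slope) inequality of Lemma \ref{lem21} for that branch. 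Since Corollary \ref{cor3} already characterizes the admissible rarefaction shock set $\mathcal S_r$ through $(u_1,\tau_1)$ for $\tau_1\in(\hat\tau_1,\tau_2^i)$, the real task is to show that the \emph{specific} speed $1/s$ can be realized by a state in $\mathcal S_r$.

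\medskip
\noindent\textbf{Key steps, in order.} First I would recall from \eqref{102401} that along the smooth solution, for $0<s<s_*$,
$$
\frac{1}{s}>u_1(s)+\tau_1(s)\sqrt{-p'(\tau_1(s))},
$$
with equality in the limit $s\to s_*^-$. By formulas \eqref{1}--\eqref{2}, a forward shock off the front state $(u_1,\tau_1)(s)$ with back state $(u,\tau)$ has speed
$$
\sigma=u_1(s)+\tau_1(s)\sqrt{-\frac{p(\tau)-p(\tau_1(s))}{\tau-\tau_1(s)}}.
$$
For a rarefaction shock into the branch $\tau>\tau_1$ from Corollary \ref{cor3}, the chord slope $-\dfrac{p(\tau)-p(\tau_1)}{\tau-\tau_1}$ ranges continuously; at $\tau\to\tau_1^+$ it tends to $-p'(\tau_1(s))$, while by \eqref{102501} the admissible endpoint $\tau_{1g}$ (defined by the tangency $\frac{p(\tau_{1g})-p(\tau_1)}{\tau_{1g}-\tau_1}=p'(\tau_{1g})$, $\tau_{1g}>\tau_2^i$) gives the \emph{larger} value $-p'(\tau_{1g})>-p'(\tau_1)$. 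Hence as $\tau$ sweeps the admissible rarefaction set, $\sigma$ ranges continuously over an interval whose infimum is $u_1(s)+\tau_1(s)\sqrt{-p'(\tau_1(s))}$ and whose supremum is $u_1(s)+\tau_1(s)\sqrt{-p'(\tau_{1g})}$. The second step is then an intermediate-value argument: I would show that $1/s$ lies inside this interval. The lower bound is exactly \eqref{102401}; for the upper bound I would use continuity in $s$ together with the fact that at $s=s_*$ the infimum equals $1/s_*$, so the speed $1/s$ (which exceeds the infimum) remains below the supremum for $s$ close enough to $s_*$ — this is what forces the restriction to $[s_{**},s_*]$ rather than all of $(0,s_*)$. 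Finally, once a back state $(u_2,\tau_2)(s)$ with $\tau_2>\tau_1$ realizing speed $1/s$ is located inside $\mathcal S_r$, admissibility is automatic from Corollary \ref{cor3}.

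\medskip
\noindent\textbf{Main obstacle.} The delicate point is establishing the \emph{upper} bound $1/s<u_1(s)+\tau_1(s)\sqrt{-p'(\tau_{1g}(\tau_1(s)))}$, i.e. that the target speed does not overshoot the fastest admissible rarefaction shock. This requires controlling the $s$-dependent gap between $1/s$ and the infimum speed $u_1(s)+\tau_1(s)\sqrt{-p'(\tau_1(s))}$ and comparing it with the strictly positive gap $\tau_1\big(\sqrt{-p'(\tau_{1g})}-\sqrt{-p'(\tau_1)}\big)$ coming from \eqref{102501}. Near $s_*$ both $1/s$ and the infimum converge to $1/s_*$ while the $\tau_{1g}$-gap stays bounded away from zero by continuity of $p'$ and the nondegeneracy $\tau_1(s)\in(\tau_0,\tau_2^i)$, so a compactness/continuity argument on the closed interval produces the threshold $s_{**}$. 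I would make this quantitative by fixing $s_{**}$ as the point where $1/s$ first meets the upper envelope $u_1(s)+\tau_1(s)\sqrt{-p'(\tau_{1g}(\tau_1(s)))}$, invoking continuity of all the quantities $u_1,\tau_1,\tau_{1g}$ in $s$, and concluding existence of the intermediate root for every $s\in[s_{**},s_*]$.
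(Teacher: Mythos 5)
Your proposal is correct and follows essentially the same route as the paper: both use the lower bound $1/s>u_1(s)+\tau_1(s)\sqrt{-p'(\tau_1(s))}$ coming from $u_1<h$, the strict gap $-p'(\tau_1)<-p'(f(\tau_1))$ at $s=s_*$ (where the lower bound becomes an equality) to get $1/s_*$ strictly below the upper envelope $u_1+\tau_1\sqrt{-p'(f(\tau_1))}$, and then continuity in $s$ to define $s_{**}$ as the first point where $1/s$ meets that envelope. Your intermediate-value discussion of the chord slope merely makes explicit what the paper delegates to Corollary \ref{cor3}.
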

\begin{proof}
According to Corollary \ref{cor3},
in order that $(u_1, \rho_1)(s)$ can be the front side state of an admissible forward rarefaction shock with the speed $1/s$, there must holds
$$
u_1(s)+\tau_1(s)\sqrt{-p'(\tau_1(s))}~\leq~ \frac{1}{s}~\leq~ u_1(s)+\tau_1(s)\sqrt{-p'(f(\tau_1(s)))}.
$$

Since $u_1(s)<h(\rho_1(s), s)$ as $0<s<s_*$, we have
$$
\frac{1}{s}~>~u_1(s)+\tau_1(s)\sqrt{-p'(\tau_1(s))}\quad \mbox{as}\quad 0<s<s_*.
$$

From $u_1(s_*)=h(\rho_1(s_*), s_*)>0$, $\tau_0<\tau_1(s_*)<\tau_2^i$, and (\ref{102501}), we have
\begin{equation}\label{81703}
\frac{1}{s_*}~=~u_1(s_*)+\tau_1(s_*)\sqrt{-p'(\tau_1(s_*))}~<~u_1(s_*)+\tau_1(s_*)\sqrt{-p'(f(\tau_1(s_*)))}.
\end{equation}
Thus, there exists a $s_{**}\in (0, s_*)$ such that $1/s<u_1(s)+\tau_1(s)\sqrt{-p'(f(\tau_1(s)))}$ as $s_{**}<s<s_*$ and
\begin{equation}\label{102502}
\frac{1}{s_{**}}=u_1(s_{**})+\tau_1(s_{**})\sqrt{-p'(f(\tau_1(s_{**})))}.
\end{equation}
We then complete the proof of this lemma.
\end{proof}

\begin{figure}[htbp]
\begin{center}
\includegraphics[scale=0.43]{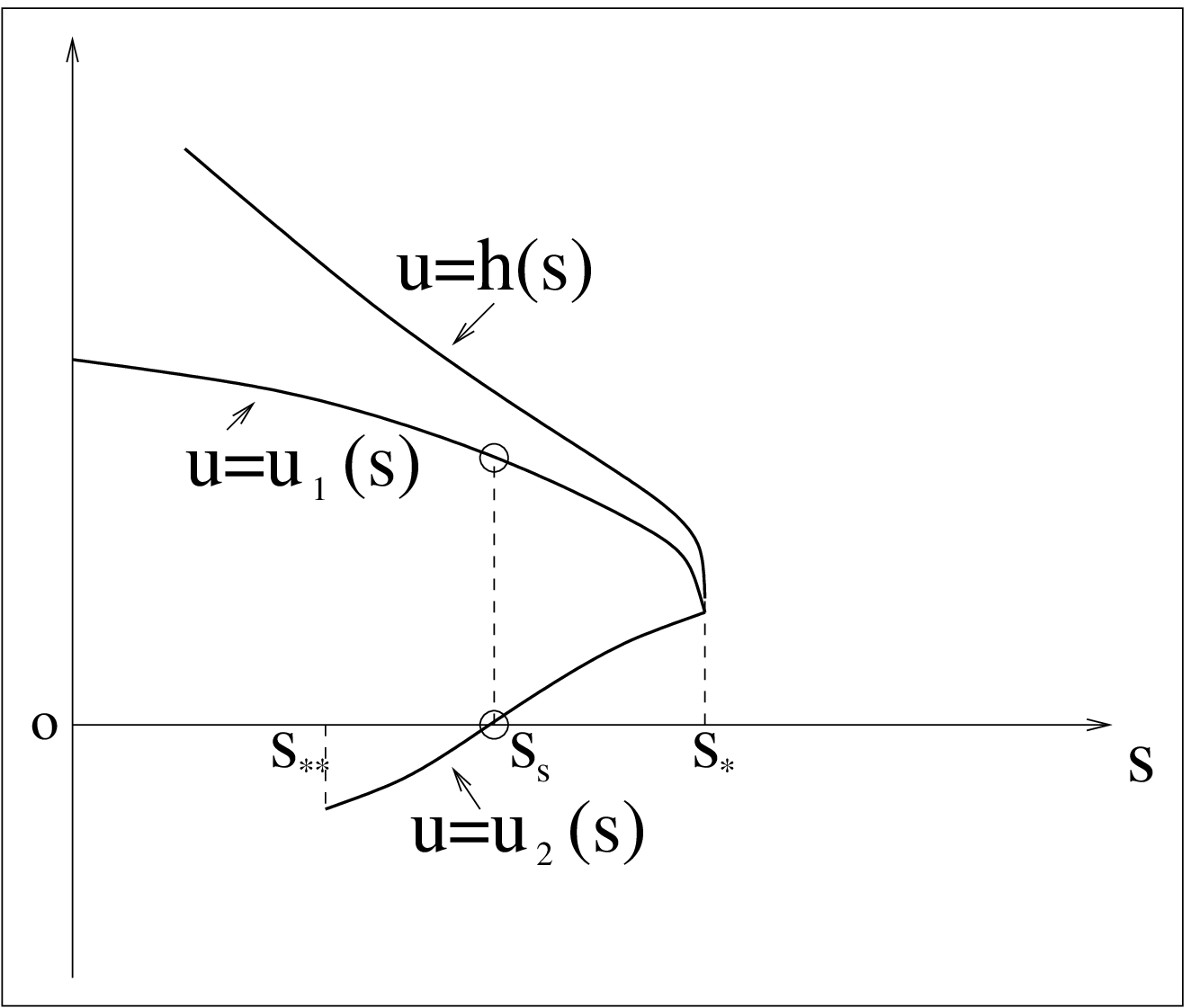}\quad\includegraphics[scale=0.43]{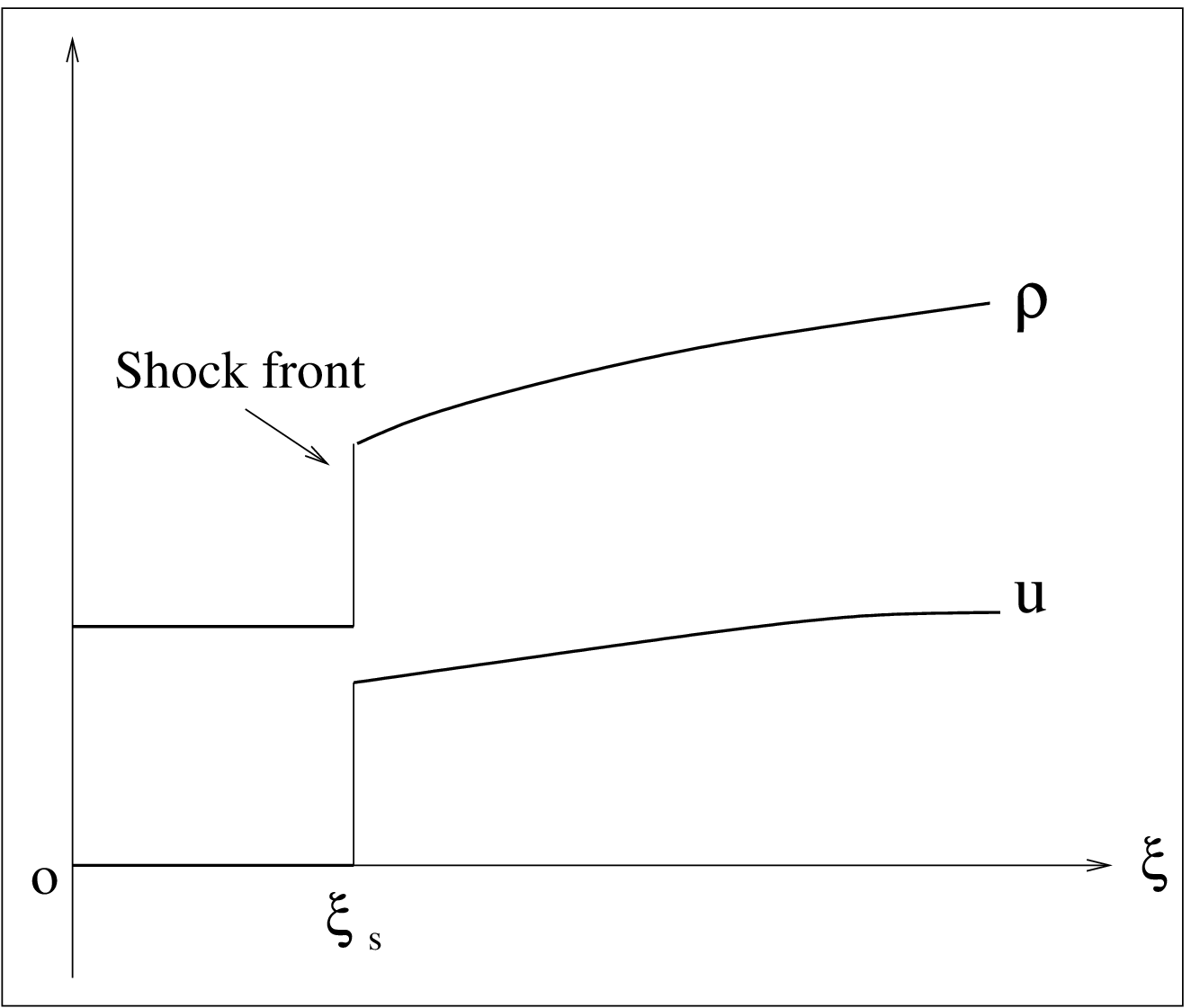}
\caption{\footnotesize Discontinuous solution with a single rarefaction shock.}
\label{fig4}
\end{center}
\end{figure}

Let $(u_2, \tau_2)(s)$ ($s_{**}\leq s<s_*$)  be determined by (\ref{backside}) and $\tau_2(s)>\tau_1(s)$.
It is easy to see that $u_2(s_*)=u_1(s_*)>0$ and
\begin{equation}\label{81701}
\tau_2(s_{**})=f(\tau_1(s_{**}))>\tau_2^i.
\end{equation}

\vskip 8pt
If $u_2(s_{**})\leq 0$ then there exists a $s_{s}\in [s_{**}, s_{*})$ such that $u_2(s_{s})=0$; see Figure \ref{fig4}(left). In this case, the self-similar solution of the problem (\ref{AE}), (\ref{IBV}) has the form:
$$
(u, \rho)(x, t)=\left\{
                 \begin{array}{ll}
                   (u_1, \rho_1)(s), & \hbox{$s<s_{s}$,} \\[4pt]
                   (0, \rho_2(s_{s})), & \hbox{$s>s_{s}$,}
                 \end{array}
               \right.
$$
where $s=t/x$; see Figure \ref{fig4}(right).


\vskip 8pt

If $u_2(s)>0$ as $s\in [s_{**}, s_{*}]$.
We then consider system (\ref{ODE3}) with the initial data
\begin{equation}\label{ID3}
(s, \rho)\mid_{u=u_2(s_{**})}~=~(s_{**}, \rho_2(s_{**})).
\end{equation}

\begin{lem}\label{100503}
When $\delta>0$ is sufficiently small, the initial value problem (\ref{ODE3}), (\ref{ID3}) has a solution $(\bar{s}, \bar{\rho})(u)$ on $(u_2(s_{**})-\delta, u_2(s_{**}))$. Moreover, this solution satisfies ${\rm d} \bar{s}/{\rm d} u<0$ and $\bar{s}^2 p'(\bar{\rho})-(1-u\bar{s})^2<0$ as $u\in\big(u_2(s_{**})-\delta, u_2(s_{**})\big)$.
\end{lem}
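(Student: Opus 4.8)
The plan is to adapt the computation of Lemma \ref{12101}, the new inputs being that the initial data $(s_{**},\rho_2(s_{**}))$ at $u=u_2(s_{**})$ sits exactly on the sonic locus and that the relevant specific volume $\tau_2(s_{**})$ lies in the convex range $\tau>\tau_2^i$. Throughout I write $G(u):=\bar s^2p'(\bar\rho)-(1-u\bar s)^2$ along the solution. Since $s_{**}>0$, $\rho_2(s_{**})>0$, $u_2(s_{**})>0$ (we are in the case $u_2(s)>0$ on $[s_{**},s_*]$) and $p'(\rho_2(s_{**}))>0$, the right-hand side of (\ref{ODE3}) is smooth in a full neighbourhood of the initial point, so standard ODE theory yields a unique $C^1$ solution $(\bar s,\bar\rho)(u)$ on a two-sided interval $\big(u_2(s_{**})-\delta,u_2(s_{**})+\delta\big)$; I keep only its left half.

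First I would record that $G(u_2(s_{**}))=0$. By (\ref{81701}) we have $\tau_2(s_{**})=f(\tau_1(s_{**}))$, so the defining relation of $f$ gives $-\frac{p(\tau_2)-p(\tau_1)}{\tau_2-\tau_1}=-p'(\tau_2)$ at $s=s_{**}$. Substituting this into the first line of (\ref{backside}), using (\ref{1}) and the identity $\tau_2\sqrt{-p'(\tau_2)}=\sqrt{p'(\rho_2)}$, yields
$$\frac{1}{s_{**}}=u_2(s_{**})+\sqrt{p'(\rho_2(s_{**}))}.$$
Hence $1-u_2(s_{**})s_{**}=s_{**}\sqrt{p'(\rho_2(s_{**}))}>0$, and squaring gives exactly $G(u_2(s_{**}))=0$.

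Next I would compute $G'$ at the initial point. Differentiating $G$ and substituting (\ref{ODE3}) gives
$$G'(u)=\big(2\bar s p'(\bar\rho)+2u(1-u\bar s)\big)\frac{G}{2p'(\bar\rho)u\bar s}+\bar s^2 p''(\bar\rho)\frac{{\rm d}\bar\rho}{{\rm d}u}+2\bar s(1-u\bar s).$$
At $u=u_2(s_{**})$ the first term vanishes because $G=0$; using $\frac{{\rm d}\bar\rho}{{\rm d}u}=\frac{\bar\rho(1-u\bar s)}{2p'(\bar\rho)\bar s}$ together with $p'(\bar\rho)=-\bar\tau^2p'(\bar\tau)$ and $p''(\bar\rho)=2\bar\tau^3p'(\bar\tau)+\bar\tau^4p''(\bar\tau)$, the remaining terms collapse to
$$G'(u_2(s_{**}))=\frac{\bar s(1-u\bar s)\,\bar\tau^2\big(\bar\tau p''(\bar\tau)-2p'(\bar\tau)\big)}{2p'(\bar\rho)}.$$
Because $\tau_2(s_{**})>\tau_2^i$, equation of state II gives $p''(\bar\tau)>0$ while $p'(\bar\tau)<0$, so $\bar\tau p''(\bar\tau)-2p'(\bar\tau)>0$; together with $\bar s>0$, $1-u\bar s>0$ and $p'(\bar\rho)>0$ this forces $G'(u_2(s_{**}))>0$. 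This is the only place the sign of $p''$ enters, and it is opposite to the one in Lemma \ref{12101}, which is why the conclusion now lands on the left interval.

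Finally, since $G(u_2(s_{**}))=0$ and $G'(u_2(s_{**}))>0$, continuity gives $\delta>0$ with $G(u)<0$ for $u\in\big(u_2(s_{**})-\delta,u_2(s_{**})\big)$, which is the second assertion. On this interval $p'(\bar\rho)>0$, $u>0$ and $\bar s>0$, so the first equation of (\ref{ODE3}) reads $\frac{{\rm d}\bar s}{{\rm d}u}=\frac{G}{2p'(\bar\rho)u\bar s}<0$, giving the first assertion. I expect the only delicate step to be the algebraic reduction of $G'$ at the initial point; the sign itself is robust here (unlike in Lemma \ref{12101}) because both $p''(\bar\tau)>0$ and $-p'(\bar\tau)>0$ push $G'$ in the same direction.
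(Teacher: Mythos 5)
Your proof follows the paper's argument essentially verbatim: you establish $G=0$ at the initial point via the tangency relation $\tau_2(s_{**})=f(\tau_1(s_{**}))$ together with the Rankine--Hugoniot identities (\ref{1}), (\ref{2}), (\ref{102502}), compute $G'>0$ there, and conclude $G<0$ and ${\rm d}\bar s/{\rm d}u<0$ on a left neighbourhood. Your closed form for the derivative, $\frac{\bar s(1-u\bar s)\,\bar\tau^2\left(\bar\tau p''(\bar\tau)-2p'(\bar\tau)\right)}{2p'(\bar\rho)}$, does not literally agree with the paper's printed expression $\frac{2\bar s\bar\tau^3 p''(\bar\tau)}{p'(\bar\rho)}(1-u\bar s)$, but both are positive under the standing hypotheses ($p''(\bar\tau)>0$ and $p'(\bar\tau)<0$ since $\bar\tau=\tau_2(s_{**})>\tau_2^i$), so the conclusion is unaffected.
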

\begin{proof}
It is easy to see that the initial value problem is a classically well-posed problem which has a unique local solution.
From(\ref{1}), (\ref{2}), and (\ref{102502}) we have
\begin{equation}\label{122503}
\frac{1}{s_{**}}=u_2(s_{**})+\tau_2(s_{**})\sqrt{-p'(\tau_2(s_{**}))}.
\end{equation}
Hence, we have
$\bar{s}^2 p'(\bar{\rho})-(1-u\bar{s})^2=0$ as $u=u_2(s_{**})$.

From (\ref{81701}) and (\ref{122503}), we have
$$
\begin{aligned}
 &\frac{{\rm d}}{{\rm d} u}\big(\bar{s}^2 p'(\bar{\rho})-(1-u\bar{s})^2\big)\\=&
 \big(2\bar{s} p'(\bar{\rho})+2u(1-u\bar{s})\big)\cdot\Big(\frac{\bar{s}^2 p'(\bar{\rho})-(1-u\bar{s})^2}{2p'(\bar{\rho}) u\bar{s}}\Big)+\frac{2\bar{s}\bar{\tau}^3p''(\bar{\tau})}{p'(\bar{\rho})}(1-u\bar{s})>0
\end{aligned}
$$
as $u=u_2(s_{**})$.

Thus, when $\delta>0$ is sufficiently small we have
$$\bar{s}^2 p'(\bar{\rho})-(1-u\bar{s})^2<0\quad \mbox{as}~~ u\in \big(u_2(s_{**})-\delta, u_2(s_{**})\big).$$
We then complete the proof of this lemma.
\end{proof}

Let $u=\bar{u}_1(s)$ be the inverse function of $s=\bar{s}(u)$ and $\bar{\rho}_1(s)=\bar{\rho}(\bar{u}_1(s))$.
It is obviously that $(\bar{u}_1, \bar{\rho}_1)(s)$ satisfies the system (\ref{ODE2}) in $\big(s_{**}, \bar{s}(u_2(s_{**})-\delta)\big)$. Moreover, by Lemma \ref{100503} we also have
$$
0~<~\bar{u}_1(s)~<~h(\bar{\rho}_1(s), s)\quad \mbox{and}\quad \bar{\tau}_1(s)>\tau_2^i
$$
as $s\in \big(s_{**}, \bar{s}(u_2(s_{**})-\delta)\big)$; see Figure \ref{fig12}(left). Thus, when $s>\bar{s}(u_2(s_{**})-\delta)$ the discussion is similar to that of section 3.1. The structures of the solution can be illustrated  in Figure \ref{fig12}.

\begin{figure}[htbp]
\begin{center}
\includegraphics[scale=0.38]{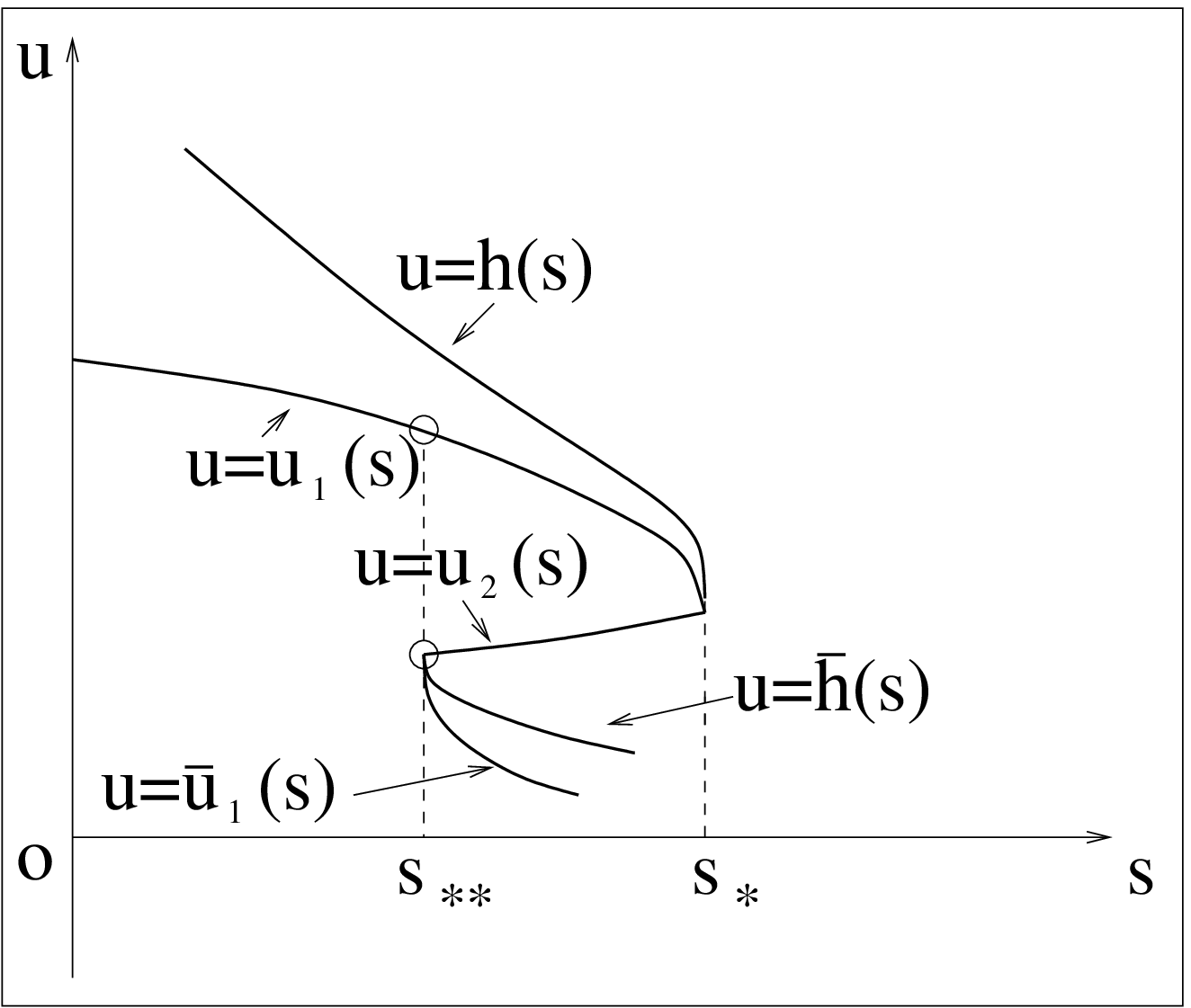}~\includegraphics[scale=0.38]{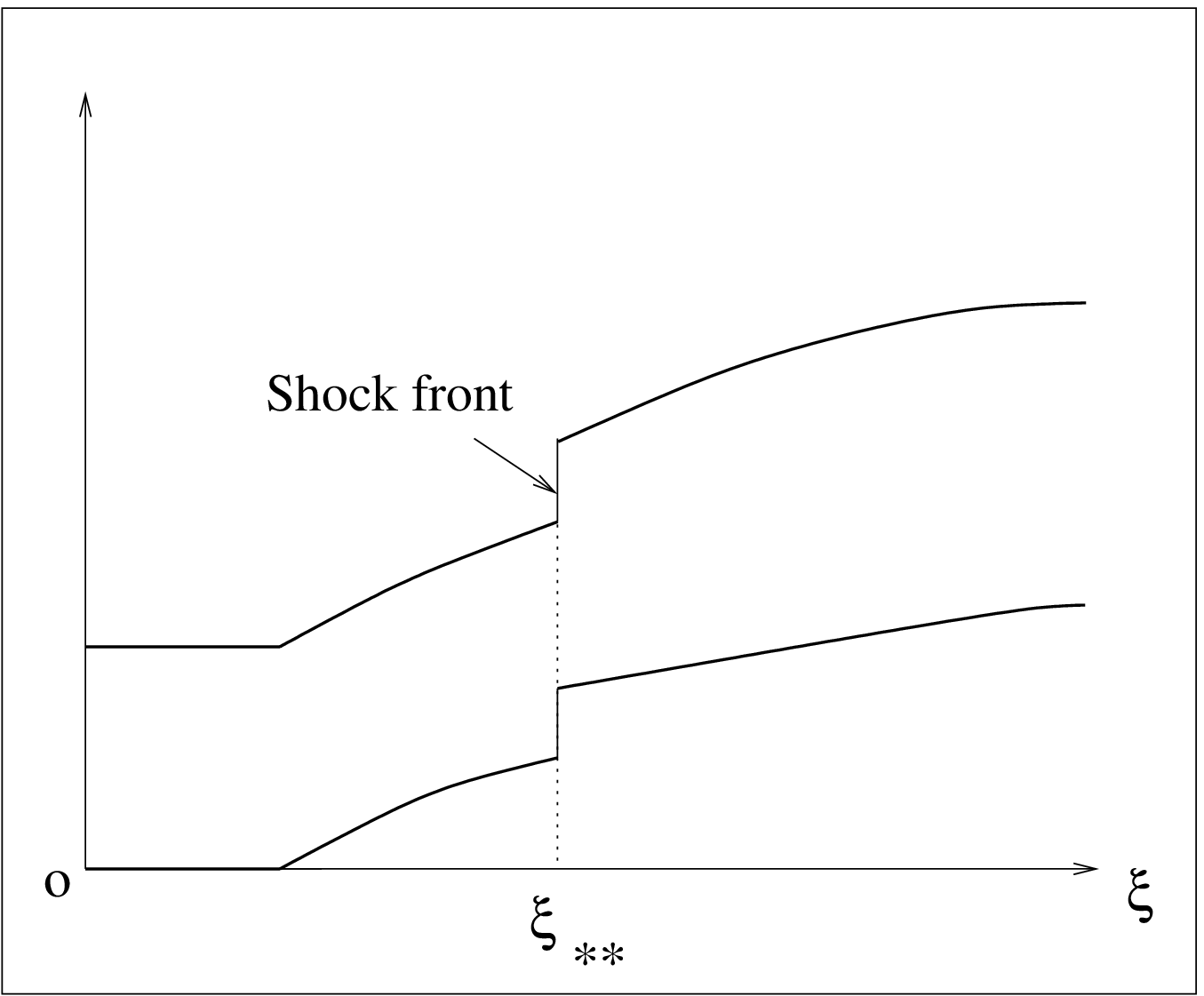}~
\includegraphics[scale=0.38]{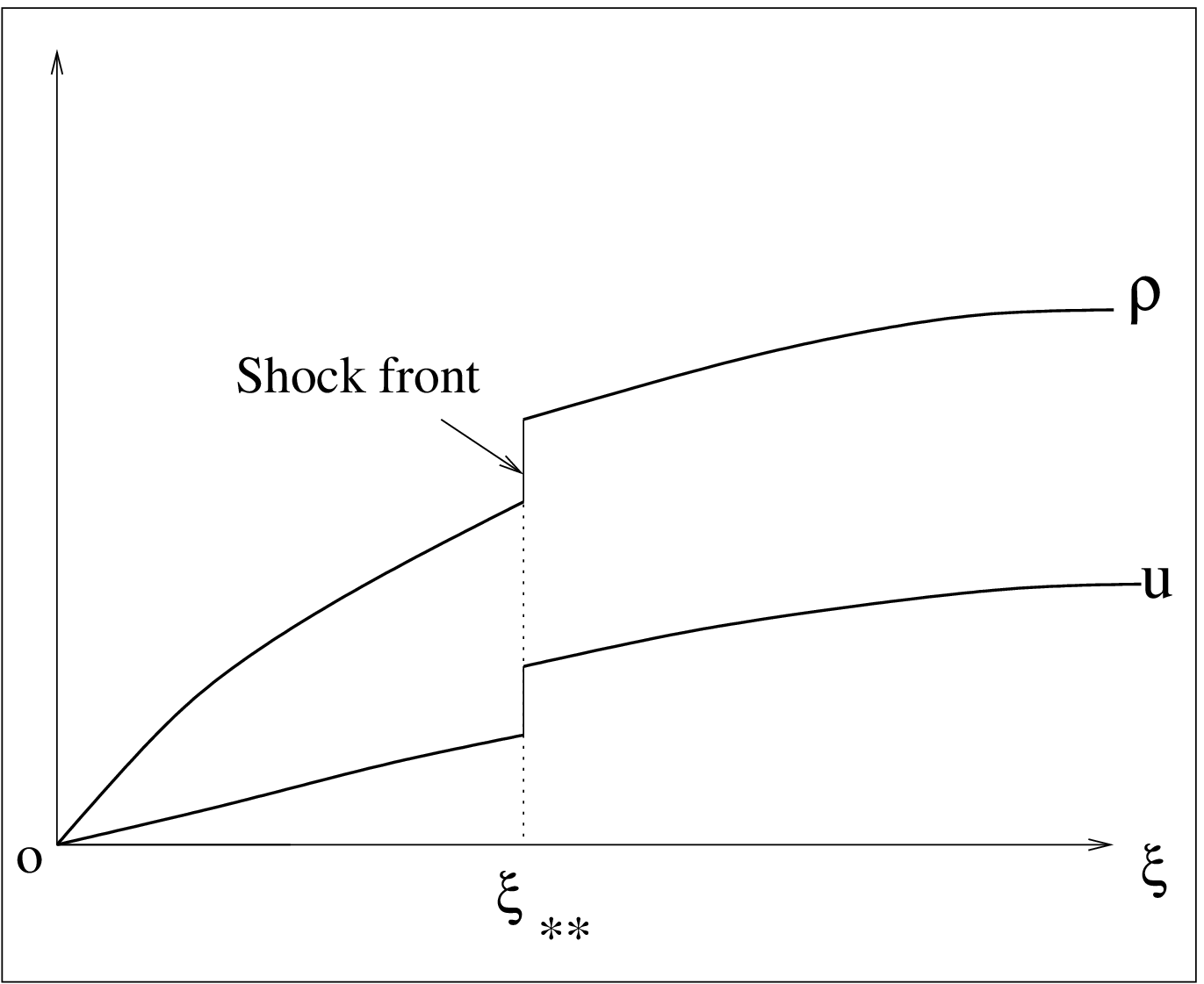}
\caption{\footnotesize Discontinuous solutions with a single rarefaction shock.}
\label{fig12}
\end{center}
\end{figure}

\subsubsection{\bf $\tau_0<\tau_1^i$} It is similar to $\tau_1^i\leq \tau_0<\tau_2^i$ that there have the four cases. We only discuss the forth case, i.e., there exists a $s_*>0$ such that $0<u_1(s)<h(\rho_1(s), s)$ as $0<s<s_*$ and $0<u_1(s_*)=h(\rho_1(s_*), s_*)<\frac{1}{s_{*}}$.

If $\tau_0\geq\hat{\tau}_1$, we can have Lemmas \ref{100502} and \ref{lem1701}, since $f(\tau_1(s))$ can be also defined for $s\in [\tau_0, s_*]$.
Then the discussion will be similar to that of section 3.2.2.  We omit the details.

If $\tau_0<\hat{\tau}_1$,
then we let $\hat{s}$ be the point such that $\tau_1(\hat{s})=\hat{\tau}_1$.
Since $u_1(s)<h(\rho_1(s), s)$ as $0<s<s_*$, we have $$\frac{1}{\hat{s}}>u_1(\hat{s})+\hat{\tau}_1\sqrt{-p'(\hat{\tau}_1)}=u_1(\hat{s})+\hat{\tau}_1\sqrt{-p'(\hat{\tau}_2)}
=u_1(\hat{s})+\hat{\tau}_1\sqrt{-p'(f(\hat{\tau}_1))},$$
since $f(\hat{\tau}_1)=\hat{\tau}_2$.
Then there exists a $s_{**}\in (\hat{s}, s_*)$ such that $$u_1(s)+\tau_1(s)\sqrt{-p'(\tau_1(s))}<\frac{1}{s}<u_1(s)+\tau_1(s)\sqrt{-p'(f(\tau_1(s)))}$$ as $s\in (s_{**}, s_*)$ and $1/s_{**}=u_1(s_{**})+\tau_1(s_{**})\sqrt{-p'(f(\tau_1(s_{**})))}$. Then the discussion will be similar to that of section 3.2.2. We omit the details.

\subsection{Equation of state III}
We first define
\begin{equation}\label{b2}
b_1:=\lim\limits_{\rho\rightarrow\tilde{\rho}_1^{+}}\sqrt{p'(\rho)}, \quad b_2:=\lim\limits_{\rho\rightarrow\tilde{\rho}_2^{-}}\sqrt{p'(\rho)},
\end{equation}
where $\tilde{\rho}_i=\frac{1}{\tilde{\tau}^i}$ ($i=1, 2$).
\subsubsection{$\tau_0\geq\tilde{\tau}_2$}
The discussion is similar to that of section 3.1, since  $\tau_1'(s)>0$ as $s>0$  and $p''(\tau)>0$ as $\tau<\tau_0$.
\subsubsection{$\tilde{\tau}_1<\tau_0<\tilde{\tau}_2$}
Let $s_{*}$ be defined so that
$$
\int_{\rho_0}^{\tilde{\rho}_2}\frac{1}{\rho}{\rm d}\rho~=~\int_{0}^{s_{*}}\frac{2u_0}{u_0 s-1} {\rm d}s.
$$
Hence, we have
$$
u_1(s)=u_0, \quad \rho_1(s)=\rho_0\exp\Big(\int_{0}^{s}\frac{2u_0}{u_0 s-1} {\rm d}s\Big), \quad 0<s<s_*.
$$

If $u_0\leq 1/s_*-b_2$ then the discussion for $s\geq s_*$ is similar to that of section 3.1,
i.e., the problem (\ref{AE}), (\ref{IBV}) has a continuous solution.

In what follows, we are going to discuss the case of $u_0> 1/s_*-b_2$. It is similar to the fourth case of section 3.2.2 that the problem does not have a compression shock wave solution. So, we look for a rarefaction shock wave solution.

For $\tilde{\tau}_1\leq\tau_1\leq\tilde{\tau}_2$, we let $g(\tau_1)$ be defined such that
\begin{equation}\label{g1}
\frac{p(\tau_1)-p(g(\tau_1))}{\tau_1-g(\tau_1)}=p'(g(\tau_1)) \quad \mbox{and}\quad g(\tau_1)>\tilde{\tau}_2.
\end{equation}

\begin{lem}
There  exist a $s_{**}\in (0, s_*)$ such that for any
$s\in [s_{**}, s_{*}]$,
there exists an admissible forward rarefaction shock with the speed $1/s$ and the front side state $(u_1, \rho_1)(s)$.
\end{lem}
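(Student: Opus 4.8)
The plan is to mirror the proof of Lemma~\ref{lem1701}, with the function $f$ of equation of state~II replaced by $g$ from (\ref{g1}) and Corollary~\ref{cor3} replaced by Corollary~\ref{cor5}. By Corollary~\ref{cor5}, when $\tau_1(s)\in(\tilde\tau_1,\tilde\tau_2)$ the admissible forward rarefaction-shock set through $(u_1,\tau_1)(s)$ is $\{(u,\tau)\mid u=\phi(\tau;u_1,\tau_1),\ \tilde\tau_2<\tau<g(\tau_1)\}$, and by (\ref{2}) the corresponding shock speed equals $u_1(s)+\tau_1(s)\sqrt{-\frac{p(\tau)-p(\tau_1)}{\tau-\tau_1}}$. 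As $\tau$ increases from $\tilde\tau_2$ to $g(\tau_1(s))$ this speed runs continuously and monotonically from $u_1(s)$ — here I use $p(\tilde\tau_2)=p(\tau_1(s))$ because $\tau_1(s)$ lies in the flat part — up to $u_1(s)+\tau_1(s)\sqrt{-p'(g(\tau_1(s)))}$. Hence, by the intermediate value theorem, $(u_1,\rho_1)(s)$ is the front state of an admissible forward rarefaction shock of speed $1/s$ precisely when
$$
u_1(s)\ \le\ \frac1s\ \le\ u_1(s)+\tau_1(s)\sqrt{-p'\big(g(\tau_1(s))\big)} ,
$$
so it is enough to exhibit an interval $[s_{**},s_*]$ on which this double inequality holds.

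The lower bound is immediate. On $(0,s_*)$ we have $u_1(s)=u_0$, and the explicit formula for $\rho_1$ stated just before the lemma gives $(1-u_0s_*)^2=\tilde\rho_2/\rho_0<1$ (since $\tau_0<\tilde\tau_2$ forces $\rho_0>\tilde\rho_2$), whence $u_0s_*<1$. Therefore $1/s\ge1/s_*>u_0=u_1(s)$ for all $s\in(0,s_*]$; equivalently, this is the inequality $u_1(s)<h(\rho_1(s),s)$ with $\sqrt{p'(\rho_1)}=0$ on the flat part. Only the right-hand inequality remains to be arranged.

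The key is the right-hand inequality at $s=s_*$, where $\tau_1(s_*)=\tilde\tau_2$. Because the point $(\tilde\tau_2,p(\tilde\tau_2))$ sits at the left end of the strictly convex branch $\{\tau>\tilde\tau_2\}$, a tangent line to that branch meets it only at the point of tangency, so the tangency condition (\ref{g1}) can be satisfied only in the limiting sense $g(\tau_1)\to\tilde\tau_2$ as $\tau_1\to\tilde\tau_2^-$, with limiting slope $p'(\tilde\tau_2^+)$. Consequently $\tau_1(s)\sqrt{-p'(g(\tau_1(s)))}\to\tilde\tau_2\sqrt{-p'(\tilde\tau_2^+)}=b_2$ as $s\to s_*$, by the definition (\ref{b2}). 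The hypothesis of the present case, $u_0>1/s_*-b_2$, then says exactly that
$$
\frac1{s_*}\ <\ u_0+b_2\ =\ \lim_{s\to s_*^-}\Big(u_1(s)+\tau_1(s)\sqrt{-p'\big(g(\tau_1(s))\big)}\Big),
$$
so the right-hand inequality holds strictly for $s$ close to $s_*$. Both sides being continuous in $s$, I would let $s_{**}\in(0,s_*)$ be the largest point at which $1/s_{**}=u_1(s_{**})+\tau_1(s_{**})\sqrt{-p'(g(\tau_1(s_{**})))}$ (taking $s_{**}\to0^+$ if equality never occurs); then both inequalities hold throughout $[s_{**},s_*]$, which is the assertion.

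The main obstacle is the boundary analysis at $s_*$ that links $b_2$ to the construction: one must verify carefully that $g(\tau_1)$ is defined for every $\tau_1\in[\tau_0,\tilde\tau_2)$ — with the point of tangency staying on the convex branch, which is where an assumption such as (A2) enters — and that it extends continuously up to $\tilde\tau_2$ with limiting tangent slope $p'(\tilde\tau_2^+)$, so that the case hypothesis $u_0>1/s_*-b_2$ translates into the strict speed inequality displayed above. The remaining monotonicity of the shock speed in $\tau$ along the admissible branch (needed only to identify the two endpoints as the extreme speeds) and the continuity of $\tau_1(s)$ and $g(\tau_1(s))$ are routine consequences of Corollary~\ref{cor5} and the regularity of $p$.
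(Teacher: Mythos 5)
Your proposal is correct and follows essentially the same route as the paper's own proof: reduce the admissibility of a rarefaction shock of speed $1/s$ via Corollary \ref{cor5} to the inequality $1/s\le u_1(s)+\tau_1(s)\sqrt{-p'(g(\tau_1(s)))}$, verify it strictly at $s=s_*$ from the hypothesis $u_0>1/s_*-b_2$ together with $g(\tau_1(s_*))=\tilde{\tau}_2$, and then take $s_{**}$ as the first point (moving left from $s_*$) where equality holds. The additional details you supply --- the lower bound $u_1(s)<1/s$, the monotonicity of the shock speed in $\tau$, and the limiting identification $\tau_1(s_*)\sqrt{-p'(g(\tau_1(s_*)))}=b_2$ --- are points the paper leaves implicit, and they are all correct.
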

\begin{proof}
According to Corollary \ref{cor5},
in order that $(u_1, \rho_1)(s)$ can be the front side state of an admissible forward rarefaction shock with the speed $1/s$, there holds
$$
0<\frac{1}{s}~\leq~ u_1(s)+\tau_1(s)\sqrt{-p'(g(\tau_1(s)))}.
$$

It follows from $u_0>1/s_*-b_2$ that
$$
\frac{1}{s_*}~<~u_0+b_2=u_0+\tau_1(s_*)\sqrt{-p'(g(\tau_1(s_*)))},
$$
since $g(\tau_1(s_*))=\tau_1(s_*)=\tilde{\tau}_2$.
Therefore, there exists a $s_{**}\in (0, s_*)$ such that $1/s<u_0+\tau_1(s)\sqrt{-p'(g(\tau_1(s)))}$ as $s\in (s_{**}, s_*)$ and
$
1/s_{**}=u_0+\tau_1(s_{**})\sqrt{-p'(g(\tau_1(s_{**})))}.
$
 We then complete the proof of this lemma.
\end{proof}

 Let $(u_2, \tau_2)(s)$ ($s_{**}\leq s<s_*$)  be determined by (\ref{backside}) and $\tau_2(s)>\tau_1(s)$. It is easy to see that $u_2(s_*)=u_0>0$. Then, the discussion will be similar to the forth case of section 3.2.2. We omit the details.

\subsubsection{$\tau_0<\tilde{\tau}_1$} We have the following two cases:
\begin{itemize}
  \item There exists a $s_*>0$ such that $u_1(s)<h(\rho_1(s), s)$ as $s<s_*$ and $u_1(s_*)=h(\rho_1(s_*), s_*)=0$ and $\rho_1(s_*)<\tilde{\rho}_1$.
  \item There exists a $s_1>0$ such that $0<u_1(s_1)<h(\rho_1(s), s)<1/s$ as $0<s<s_1$ and $\rho_1(s_1)=\tilde{\rho}_1$.
\end{itemize}
The structure of the solution for the first case can be illustrated  by Figure \ref{fig2}. We only need to discuss the second case.

Let $s_{*}$ be defined so that
$$
\int_{\tilde{\rho}_1}^{\tilde{\rho}_2}\frac{1}{\rho}{\rm d}\rho~=~\int_{s_1}^{s_{*}}\frac{2u_1(s_1)}{u_1(s_1) s-1} {\rm d}s.
$$
Hence, we have
$$
u_1(s)= u_1(s_1), \quad \rho(s)=\tilde{\rho}_1\exp\Big(\int_{s_1}^{s}\frac{2u_1(s_1)}{u_1(s_1) s-1} {\rm d}s\Big), \quad u_1(s)<\frac{1}{s}=h(\rho_1(s), s)\quad\mbox{as}\quad s_1<s<s_*.
$$

\begin{figure}[htbp]
\begin{center}
\includegraphics[scale=0.36]{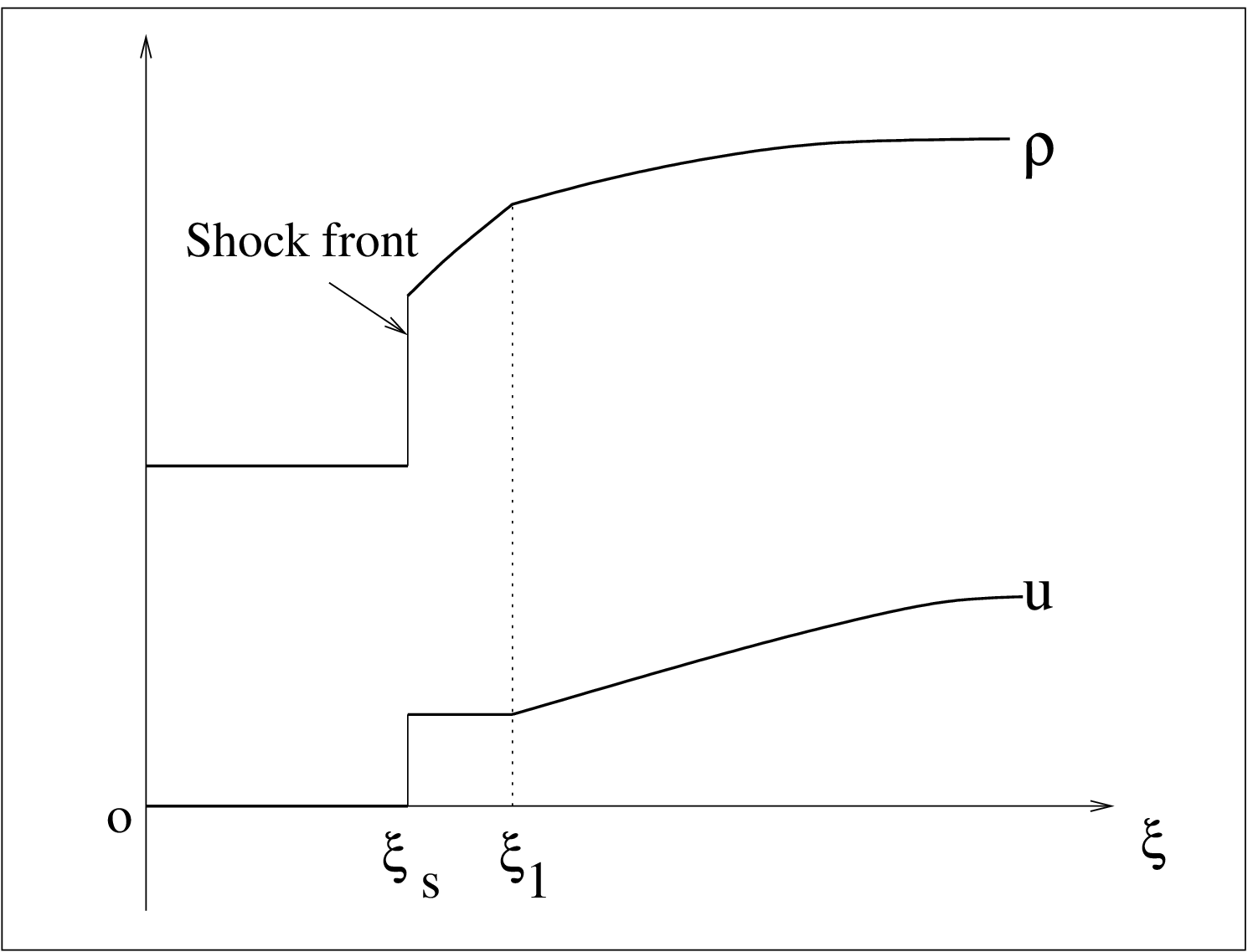}~\includegraphics[scale=0.36]{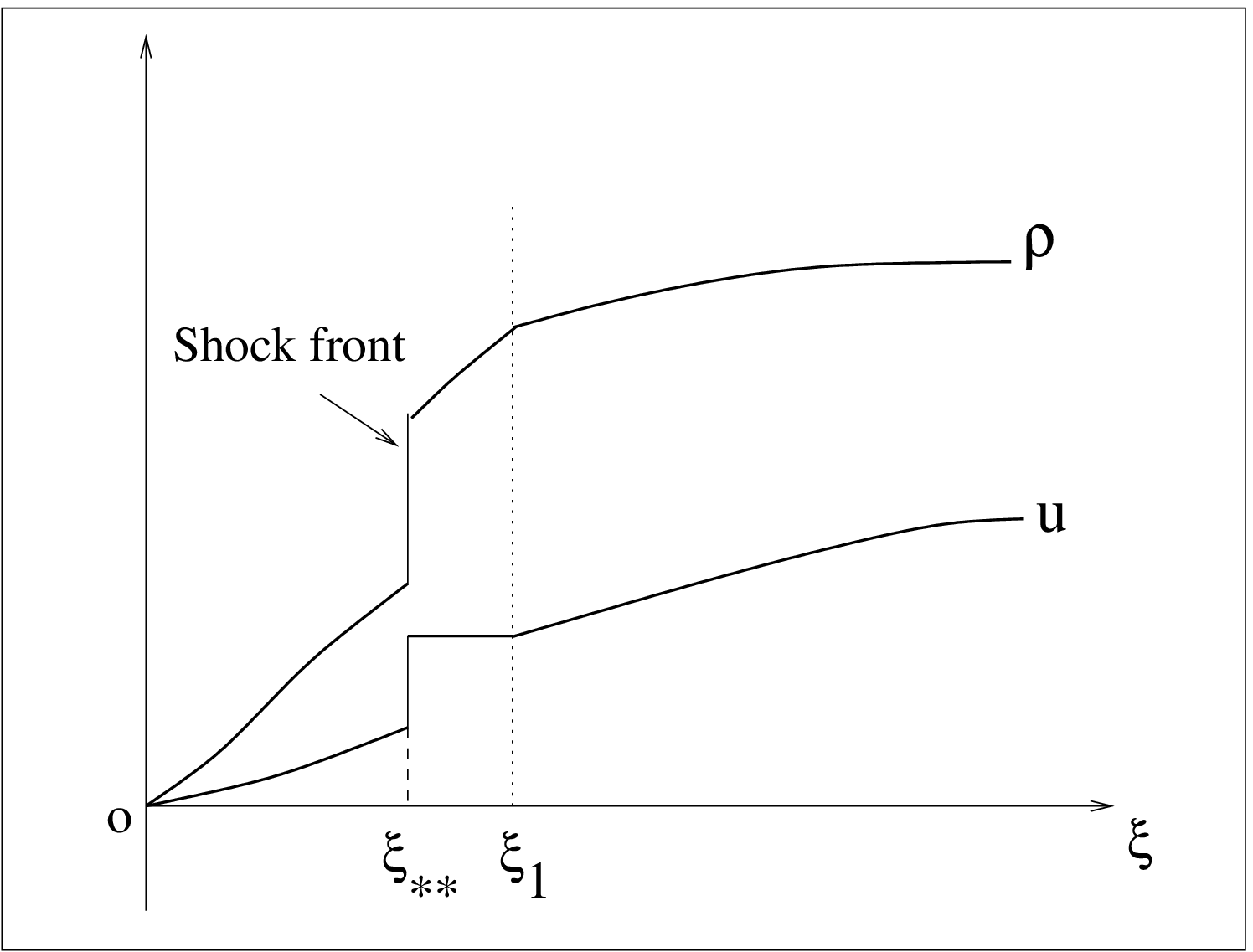}~ \includegraphics[scale=0.36]{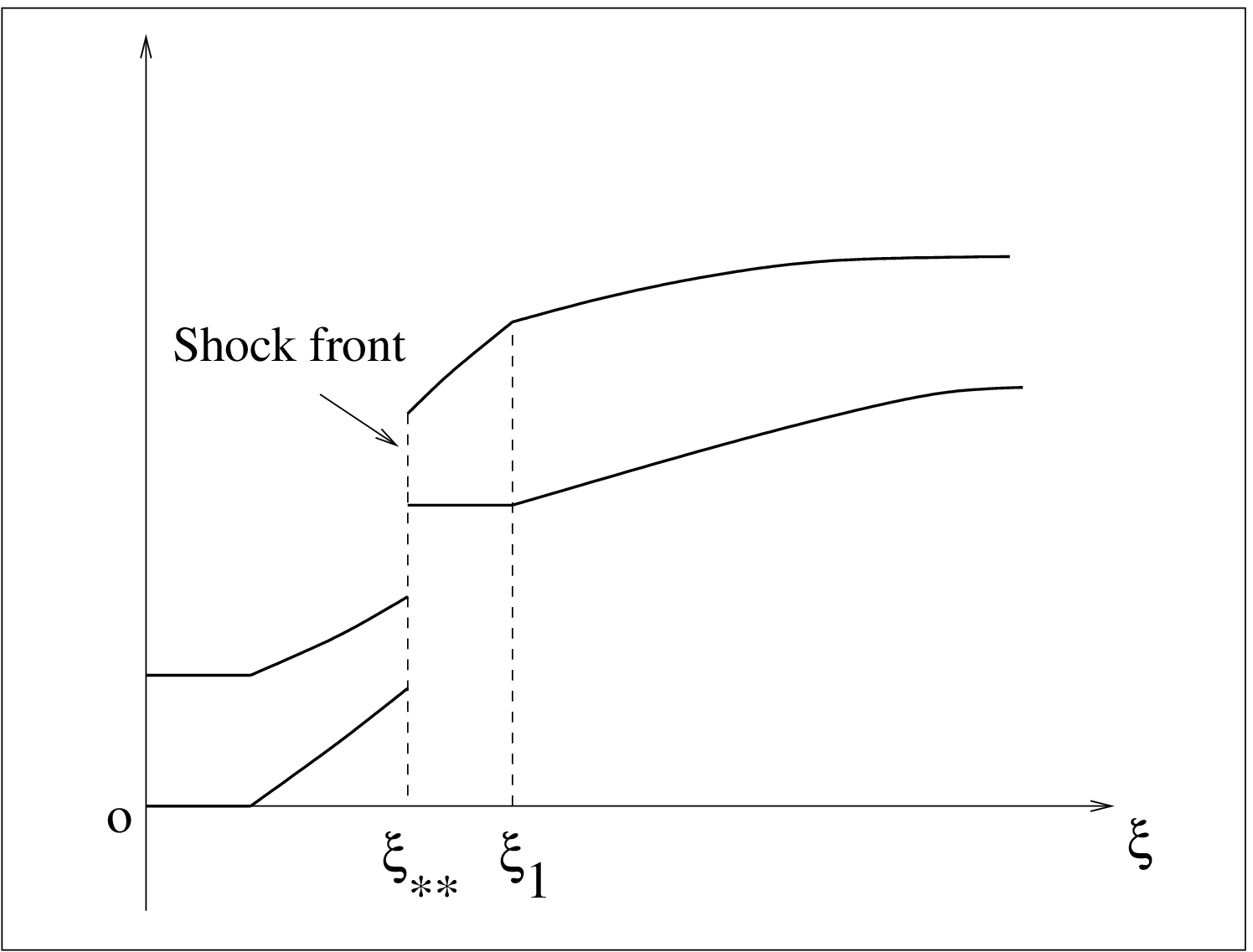}
\caption{\footnotesize Discontinuous solutions with a single rarefaction shock.}
\label{fig5}
\end{center}
\end{figure}

If $u_1(s_1)\leq 1/s_*-b_2$ then the discussion for $s>s_*$ is similar to that of section 3.1.
In what follows, we are going to discuss the case of $u_1(s_1)> 1/s_*-b_2$. We look for a rarefaction shock wave solution.

\begin{lem}
There exists a $s_{**}\in (s_1, s_*)$ such that for any
$s\in [s_{**}, s_{*}]$,
there exists an admissible forward rarefaction shock with the speed $1/s$ and the front side state $(u_1, \rho_1)(s)$.
\end{lem}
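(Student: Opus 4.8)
The plan is to reproduce, almost verbatim, the argument used for the case $\tilde{\tau}_1<\tau_0<\tilde{\tau}_2$ treated just above, the only change being that the constant velocity $u_0$ there is now played by the value $u_1(s_1)$ attained when the solution first reaches the plateau. The decisive structural fact I would exploit is that on $(s_1,s_*)$ the solution stays inside the Maxwell plateau, so that $\tau_1(s)\in(\tilde{\tau}_1,\tilde{\tau}_2)$, $p'(\tau_1(s))=0$, $u_1(s)\equiv u_1(s_1)$, and the tangent cutoff $g(\tau_1(s))$ of (\ref{g1}) is well defined and continuous there.

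First I would invoke Corollary \ref{cor5}. Since $\tau_1(s)\in(\tilde{\tau}_1,\tilde{\tau}_2)$ for $s\in(s_1,s_*)$, the admissible back states reachable from $(u_1,\rho_1)(s)$ fill the arc $\tau\in(\tilde{\tau}_2,g(\tau_1(s)))$, and by (\ref{102503}) the associated forward shock speed $u_1(s)+\tau_1(s)\sqrt{-(p-p_1)/(\tau-\tau_1)}$ increases from $u_1(s)$, its limit as $\tau\to\tilde{\tau}_2^{+}$ (where the chord slope vanishes because $p$ is constant on the plateau), up to the maximal value $u_1(s)+\tau_1(s)\sqrt{-p'(g(\tau_1(s)))}$ attained at the tangent state $\tau=g(\tau_1(s))$. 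Hence $(u_1,\rho_1)(s)$ is the front side state of an admissible forward rarefaction shock of speed $1/s$ exactly when
$$
0<\frac{1}{s}\leq u_1(s)+\tau_1(s)\sqrt{-p'(g(\tau_1(s)))},
$$
the lower constraint $u_1(s)<1/s$ being automatic because $u_1(s)<h(\rho_1(s),s)=1/s$ on the plateau. Thus the lemma reduces to producing an interval of $s$ on which this single inequality holds.

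Next I would evaluate the right-hand side at $s=s_*$. There $\tau_1(s_*)=\tilde{\tau}_2$ and $u_1(s_*)=u_1(s_1)$, while $g(\tilde{\tau}_2)=\tilde{\tau}_2$; combining the identity $\sqrt{p'(\rho)}=\tau\sqrt{-p'(\tau)}$ with the definition (\ref{b2}) of $b_2$ yields $\tau_1(s_*)\sqrt{-p'(g(\tau_1(s_*)))}=\tilde{\tau}_2\sqrt{-p'(\tilde{\tau}_2^{+})}=b_2$. The standing hypothesis of this case, $u_1(s_1)>1/s_*-b_2$, is then precisely the strict inequality $1/s_*<u_1(s_1)+b_2=u_1(s_*)+\tau_1(s_*)\sqrt{-p'(g(\tau_1(s_*)))}$, so the required condition holds strictly at $s=s_*$. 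Since $s\mapsto u_1(s)+\tau_1(s)\sqrt{-p'(g(\tau_1(s)))}-1/s$ is continuous on $(s_1,s_*]$ and strictly positive at $s_*$, it stays positive on a left neighbourhood of $s_*$; taking $s_{**}$ to be the largest zero of this function in $(s_1,s_*)$, or any interior point of that neighbourhood if it has no zero there, furnishes $s_{**}\in(s_1,s_*)$ for which an admissible forward rarefaction shock of speed $1/s$ exists at every $s\in[s_{**},s_*]$.

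The main obstacle is not analytic but lies in the endpoint bookkeeping: I must check that $\tau_1(s)$ genuinely traverses $(\tilde{\tau}_1,\tilde{\tau}_2)$ on $(s_1,s_*)$ so that Corollary \ref{cor5} is applicable, and I must pin down the one-sided limits $g(\tilde{\tau}_2)=\tilde{\tau}_2$ and $\tilde{\tau}_2\sqrt{-p'(\tilde{\tau}_2^{+})}=b_2$ that convert the structural hypothesis $u_1(s_1)>1/s_*-b_2$ into the working inequality at $s_*$. Once these boundary values are settled, the remainder is the same elementary continuity argument as in the preceding subsection, so I expect no further difficulty.
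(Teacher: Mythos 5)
Your proposal is correct and follows essentially the same route as the paper: both reduce the claim via Corollary \ref{cor5} to the single inequality $1/s \le u_1(s)+\tau_1(s)\sqrt{-p'(g(\tau_1(s)))}$, verify it strictly at $s=s_*$ from the hypothesis $u_1(s_1)>1/s_*-b_2$ together with $g(\tau_1(s_*))=\tau_1(s_*)=\tilde{\tau}_2$ and $b_2=\tilde{\tau}_2\sqrt{-p'(\tilde{\tau}_2^{+})}$, and conclude by continuity. The only difference is that the paper additionally invokes assumption (A2) to show the inequality \emph{fails} at $s=s_1$ (its inequality (\ref{122601})), which guarantees that $s_{**}$ can be taken as a genuine crossing point where $1/s_{**}=u_1(s_{**})+\tau_1(s_{**})\sqrt{-p'(g(\tau_1(s_{**})))}$ --- a sonic-from-behind property that the subsequent construction uses, though it is not needed for the lemma as literally stated.
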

\begin{proof}
Since $u_1(s)<h(\rho_1(s), s)$ as $0<s<s_*$, we have
$$
\frac{1}{s}>u_1(s)+\tau_1(s)\sqrt{-p'(\tau_1(s))}\quad \mbox{as}\quad 0<s<s_*.
$$
From assumption (A2) we also have
\begin{equation}\label{122601}
\frac{1}{s_1}>u_1(s_1)+b_1>u_1(s_1)+\tau_1(s_1)\sqrt{-p'(g(\tau_1(s_1)))}.
\end{equation}
It follows from $u_1(s_1)> 1/s_*-b_2$ that
\begin{equation}\label{12201}
\frac{1}{s_*}~<~u_1(s_1)+b_2=u_1+\tau_1(s_*)\sqrt{-p'(g(\tau_1(s_*)))}.
\end{equation}
Combining with (\ref{122601}) and (\ref{12201}), there exists a $s_{**}\in (s_1, s_*)$ such that $1/s<u_1(s)+\tau_1(s)\sqrt{-p'(g(\tau_1(s)))}$ as $s\in (s_{**}, s_*)$ and $1/s_{**}=u_1(s_{**})+\tau_1(s_{**})\sqrt{-p'(g(\tau_1(s_{**})))}$. Thus by Corollary \ref{cor5}
we have this lemma.
\end{proof}
Hence, the discussion for this case will be similar to the forth case of section 3.2.2. The wave structures of the solution can be illustrated in Figure \ref{fig5}.

\section{\bf Self-similar solutions for $u_0<0$}
In this section, we will construct the self-similar solutions of the problem (\ref{AE}), (\ref{IBV})  for $u_0>0$.

\subsection{Equation of state I}
From $u_0<0$ we have
\begin{equation}\label{102601}
\frac{{\rm d} u_1}{{\rm d} s}>0\quad\mbox{and}\quad \frac{{\rm d} \rho_1}{{\rm d} s}>0.
\end{equation}
\begin{lem}\label{lem4.1}
For any $u_0<0$ there exists a $s_{*}>0$ such that $u_1(s)<h(\rho_1(s), s)$ as $0<s<s_*$ and $u_1(s_*)=h(\rho_1(s_*), s_*)<0$; see Figure \ref{fig6}(left).
\end{lem}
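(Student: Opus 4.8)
The plan is to argue by contradiction, showing that the trajectory $(u_1,\rho_1)(s)$ cannot remain in the region $\{u_1<h\}$ for all $s>0$, so that it must touch the sonic curve $u=h(\rho,s)$ at a finite $s_*$, and then to check that this contact necessarily occurs at a negative value of $u_1$.

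First I would record the monotonicity that already underlies (\ref{102601}): as long as $u_1(s)<h(\rho_1(s),s)$ the denominator $s^2p'(\rho_1)-(1-u_1s)^2$ is negative, and since $u_0<0$ the integral argument of Lemma \ref{lem3.1} (divergence of $\int \mathrm{d}u/u$ as $u_1\to 0^-$) shows that $u_1(s)$ can never reach $0$ while this denominator stays bounded away from $0$. Hence $u_1\in(u_0,0)$ throughout, $u_1'(s)>0$ and $\rho_1'(s)>0$, and in particular any contact $u_1(s_*)=h(\rho_1(s_*),s_*)$ automatically satisfies $u_1(s_*)<0$ and therefore $h(\rho_1(s_*),s_*)<0$. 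Thus it only remains to prove that the contact happens at a finite $s_*$.

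Next I would suppose, for contradiction, that $u_1(s)<h(\rho_1(s),s)$ for all $s>0$. Then $u_1$ increases to a limit $u_\infty\le 0$ and $\rho_1$ is increasing. Writing $D(s):=(1-u_1s)^2-s^2p'(\rho_1)>0$ and using $|u_1|\le|u_0|$ together with $p'(\rho_1)>0$, one obtains the elementary lower bound $\mathrm{d}\rho_1/\mathrm{d}s\ge 2\rho_1|u_1|/(1+|u_0|s)$; integrating this shows that $\rho_1(s)$ grows at least like a positive power of $1+|u_0|s$, so $\rho_1(s)\to+\infty$ (the borderline case $u_\infty=0$ being handled directly, since then $h\to 0^-$ while $u_1\to 0^-$ forces a contact).

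Finally comes the crux: as $\rho_1\to\infty$ the sound speed squared $p'(\rho_1)=-\tau_1^2p'(\tau_1)$ tends to $+\infty$ for the equations of state under consideration (for instance $p\to\infty$ as the specific volume decreases to its minimal value, as in the van der Waals case). Consequently $s^2p'(\rho_1(s))$ eventually overtakes $(1-u_1s)^2\le(1+|u_0|s)^2$, so $D(s)$ must vanish at some finite $\hat s$; at such a point $u_1(\hat s)=h(\rho_1(\hat s),\hat s)$, contradicting the assumption that $u_1<h$ on all of $(0,\infty)$. This produces the desired finite $s_*$, and by the first step $u_1(s_*)=h(\rho_1(s_*),s_*)<0$. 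I expect the main obstacle to be exactly this last step, namely the quantitative control of the sound speed $p'(\rho_1)$ as $\rho_1$ grows, guaranteeing that $s^2p'(\rho_1)$ catches up with $(1-u_1s)^2$ and drives the trajectory into the sonic line at finite $s$; the monotonicity and the integral estimates in the first two steps are routine.
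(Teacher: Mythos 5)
There is a genuine gap, and it sits exactly where you declare the matter settled: the claim that ``any contact $u_1(s_*)=h(\rho_1(s_*),s_*)$ automatically satisfies $u_1(s_*)<0$.'' Your justification is that the divergence of $\int \mathrm{d}u/u$ prevents $u_1$ from reaching $0$ \emph{while the denominator $s^2p'(\rho_1)-(1-u_1s)^2$ stays bounded away from zero}. But at a contact point the denominator vanishes by definition, so this argument says nothing about the scenario in which $u_1\nearrow 0$ and $h\searrow 0$ simultaneously, i.e.\ the trajectory hits the sonic curve exactly on the $s$-axis. This is not a pedantic loophole: the divergent singular integral on the right-hand side of the $u$-equation can in principle absorb the divergence of $\int\mathrm{d}u/u$, and in the non-convex cases treated later in the paper (Sections 4.2.2--4.2.3) the contact at $u_1(s_*)=h=0$ genuinely occurs. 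The paper's proof devotes its entire first step to excluding this for equation of state I, by passing to the parametrized autonomous system (\ref{100601}), linearizing at the critical point $(0,s_1,\rho_1(s_1))$, and showing via the sign of $\rho\,p''(\rho)+2p'(\rho)=\tau^3p''(\tau)>0$ (convexity in $\tau$) that $\mathrm{d}s/\mathrm{d}u\to-\infty$ there, which is incompatible with the approach of the trajectory. You would need to supply this (or an equivalent) argument; it does not follow from monotonicity.

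A secondary problem is your final ``overtaking'' step, which needs $p'(\rho)\to+\infty$ as $\rho\to\infty$. That is not implied by the hypotheses on equation of state I (only $p'(\tau)<0$, $p''(\tau)>0$ and assumption (A1), which constrains $p'(\rho)$ near $\rho=0$, not near $\rho=\infty$); for a bounded sound speed $s^2p'(\rho_1)$ need never catch up with $(1-u_1s)^2$. The paper avoids this by a different mechanism: with $m=\inf_{\rho\ge\rho_0}\sqrt{p'(\rho)}$ it bounds $\mathrm{d}u_1/\mathrm{d}s$ below by $m^3s/(1-u_0s)^2$, whose integral over $(0,\infty)$ diverges logarithmically, contradicting the boundedness of $u_1$. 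Your lower bound $\mathrm{d}\rho_1/\mathrm{d}s\ge 2\rho_1|u_1|/(1+|u_0|s)$ is correct and does give $\rho_1\to\infty$ when $u_\infty<0$, but the conclusion you draw from it requires a growth hypothesis on $p'$ that the lemma does not grant you.
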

\begin{proof}
The proof of this lemma proceeds in two steps.

{\bf Step 1.} We first claim that the integral curves $u=u_1(s)$ and $u=h(\rho_1(s), s)$ can not intersect at the $s-$axis.

We argue by contradiction.
Suppose  there is a $s_1>0$ such that $u_1(s)<0<h(\rho_1(s), s)$ as $s<s_1$ and  $u_1(s_1)=h(\rho_1(s_1), s_1)=0$.
Then we have $s_1\sqrt{p'(\rho_1(s_1))}=1$.
We consider the following ordinary system
\begin{equation}\label{100601}
\left\{
 \begin{aligned}
    &\frac{{\rm d} u}{{\rm d} r}=2p'(\rho) us,\\
    &\frac{{\rm d} s}{{\rm d} r}= s^2 p'(\rho)-(1-us)^2,\\
& \frac{{\rm d} \rho}{{\rm d} r}=2\rho u (1-us)
  \end{aligned}
\right.
\end{equation}
At the point $(u, s, \rho)=(0, s_1, \rho_1(s_1))$, we find the linear part of the right-hand side of (\ref{100601}) is given by $M(u, s-s_1, \rho-\rho_1(s_1))^{T}$ where
$$
M=\left(
    \begin{array}{ccc}
      \displaystyle 2\sqrt{p'(\rho_1(s_1))} & 0 & 0\\[6pt]
      \displaystyle \frac{2}{\sqrt{p'(\rho_1(s_1))}} & 2\sqrt{p'(\rho_1(s_1))} & \displaystyle\frac{p''(\rho_1(s_1))}{p'(\rho_1(s_1))} \\[6pt]
\displaystyle\\ 2\rho_1(s_1) &0  &0
    \end{array}
  \right).
$$
Since
$$
\begin{aligned}
&\frac{p''(\rho_1(s_1))}{p'(\rho_1(s_1))}\cdot\frac{\rho_1(s_1)}{\sqrt{p'(\rho_1(s_1))} }+\frac{2}{\sqrt{p'(\rho_1(s_1))}}\\
&\qquad =~\frac{1}{(p'(\rho_1(s_1)))^{3/2}}\Big(\rho_1(s_1) p''(\rho_1(s_1))+2p'(\rho_1(s_1))\Big)~=~\frac{\tau_1^3(s_1)p''(\tau_1(s_1))}{(p'(\rho_1(s_1)))^{3/2}}~>~0,
\end{aligned}
$$
we have that along the integral curves of (\ref{100601}), $ \frac{{\rm d} s}{{\rm d} u}\rightarrow -\infty$ as $(u, s, \rho)\rightarrow (0, s_1, \rho_1(s_1))$; see Figure \ref{fig13}.
This leads to a contradiction.
Thus the integral curves $u=u_1(s)$ and $u=h(\rho_1(s), s)$ can not intersect at the $s-$axis.

\begin{figure}[htbp]
\begin{center}
\includegraphics[scale=0.45]{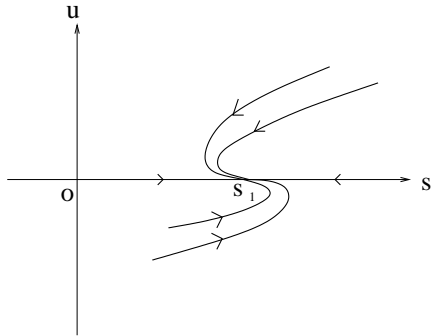}
\caption{\footnotesize  Integration curves of (\ref{100601}).}
\label{fig13}
\end{center}
\end{figure}


{\bf Step 2.} Let $m=\inf\limits_{\rho\in[\rho_0, +\infty)}\sqrt{p'(\rho)}$.
Suppose that the curves $u=u_1(s)$ and $u=h(\rho_1(s), s)$ do not intersect with each other.
Then by (\ref{102601}) we have $$\lim\limits_{s\rightarrow +\infty}u_1(s)=u_{\infty}<-m.$$
By (\ref{ODE2}), we have
$$
\frac{{\rm d} u_1}{{\rm d} s}=\frac{p'(\rho_1) u_1s}{s^2 p'(\rho_1)-(1-u_1s)^2}>\frac{m^3s}{(1-u_{0}s)^2}.
$$
Hence,
$$
u_{\infty}-u_0~>~u_1(s)-u_0 ~>~ \int_{0}^{s} \frac{m^3 s}{(1-u_{0}s)^2}  {\rm d}s \quad \mbox{as}\quad s>0,
$$
which leads to a contradiction.
We then have this lemma.
\end{proof}

Lemma \ref{lem4.1} implies that if $u_0<0$ then the problem (\ref{AE}), (\ref{IBV}) does not have a global continuous solution. So, we need to look for a shock wave solution.
\begin{lem}
For any $s\in (0, s_*)$ there exists an admissible compression shock with the speed $1/s$ and the front side state $(u_1, \rho_1)(s)$.
\end{lem}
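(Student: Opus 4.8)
The plan is to produce the back state explicitly by solving the Rankine--Hugoniot speed equation along the compression shock locus through $(u_1,\rho_1)(s)$. Fix $s\in(0,s_*)$ and write $\tau_1=\tau_1(s)=1/\rho_1(s)$. By Corollary \ref{cor1}, every state with $\tau_2<\tau_1$ on the shock set $\mathcal{S}_c$ through $(u_1,\rho_1)(s)$ is connected to it by an admissible forward compression shock, so it suffices to exhibit one such $\tau_2\in(0,\tau_1)$ whose shock speed equals $1/s$. Using the front-side speed formula (\ref{2}), I define
$$
G(\tau_2):=u_1(s)+\tau_1\sqrt{-\frac{p(\tau_2)-p(\tau_1)}{\tau_2-\tau_1}},\qquad \tau_2\in(0,\tau_1),
$$
and aim to show that $G$ is continuous and takes the value $1/s$ for some $\tau_2<\tau_1$, after which the back state $(u_2,\tau_2)$ is recovered from (\ref{backside}) and the shock is admissible by Corollary \ref{cor1}.

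First I would establish monotonicity and pin down the lower endpoint of the range of $G$. The quantity $-\frac{p(\tau_2)-p(\tau_1)}{\tau_2-\tau_1}$ is the negative of the secant slope of $p$ between $\tau_2$ and $\tau_1$; since $p''>0$ for equation of state I, this secant slope decreases as $\tau_2$ decreases, so $G$ is strictly increasing as $\tau_2\downarrow$. As $\tau_2\to\tau_1^-$ the secant slope tends to $p'(\tau_1)$, whence, using $p'(\rho)=-\tau^2 p'(\tau)$,
$$
\lim_{\tau_2\to\tau_1^-}G(\tau_2)=u_1(s)+\tau_1\sqrt{-p'(\tau_1)}=u_1(s)+\sqrt{p'(\rho_1(s))}.
$$
Because $0<s<s_*$, Lemma \ref{lem4.1} gives $u_1(s)<h(\rho_1(s),s)=\frac1s-\sqrt{p'(\rho_1(s))}$, so this limit lies strictly below $1/s$.

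It then remains to show that $G(\tau_2)$ rises above $1/s$ as $\tau_2\to0^+$. Granting this, the intermediate value theorem supplies a $\tau_2\in(0,\tau_1)$ with $G(\tau_2)=1/s$, and since $\tau_2<\tau_1$ the corresponding shock is a compression shock, admissible by Corollary \ref{cor1}; this completes the argument. The main obstacle is precisely this upper-endpoint (strong-shock) step: as $s$ ranges over all of $(0,s_*)$ the required speed $1/s$ is unbounded near $s=0$, while by (\ref{102601}) the front state $(u_1,\rho_1)(s)\to(u_0,\rho_0)$ stays bounded, so arbitrarily strong compression shocks from an essentially fixed front state are needed, forcing $G(\tau_2)\to+\infty$ as $\tau_2\to0^+$. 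I would isolate this limit and verify it from the behaviour of the pressure as $\tau_2\to0^+$ for the equations of state under consideration (the unbounded growth of the secant-slope magnitude for the van der Waals type law), treating it as the crux; the continuity and monotonicity of $G$ are routine once this growth is in hand.
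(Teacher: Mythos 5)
Your proposal is essentially the paper's own argument: the paper's entire proof is the one-line observation that $1/s>u_1(s)+\sqrt{p'(\rho_1(s))}$ on $(0,s_*)$ (which is exactly what Lemma \ref{lem4.1} supplies), i.e.\ that the required speed exceeds the weak-shock limit of the speed function along the compression branch, and you have merely made explicit the monotonicity/intermediate-value step and the appeal to Corollary \ref{cor1} and formula (\ref{2}) that this observation implicitly invokes. The strong-shock unboundedness of the speed as $\tau_2\to 0^{+}$, which you correctly flag as the crux still to be verified, is likewise left unaddressed (indeed unmentioned) in the paper's proof.
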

\begin{proof}
This lemma can be proved by the fact that
$
1/s~>~u_1(s)+\sqrt{p'(\rho_1(s))}
$
as $0<s<s_*$.
\end{proof}
Let the back side state of the shock $(u_2, \rho_2)(s)$ ($0<s\leq s_*$) be determined by (\ref{backside}). It is easy to see that
$$
u_2(s_{*})=u_1(s_*)<0\quad \mbox{and} \quad \lim\limits_{s\rightarrow 0}u_2(s)=+\infty.
$$
Therefore, there exists a $s_{s}\in (0, s_{*})$ such that $u_2(s_{s})=0$. Hence, the self-similar solution of the problem (\ref{AE}), (\ref{IBV}) has the form
$$
(u, \rho)(x, t)=\left\{
                 \begin{array}{ll}
                   (u_1, \rho_1)(s), & \hbox{$s<s_{s}$,} \\[4pt]
                   (0, \rho_2(s_{s})), & \hbox{$s>s_{s}$,}
                 \end{array}
               \right.
$$
where $s=t/x$; see Figure \ref{fig6}(right).

\begin{figure}[htbp]
\begin{center}
\includegraphics[scale=0.35]{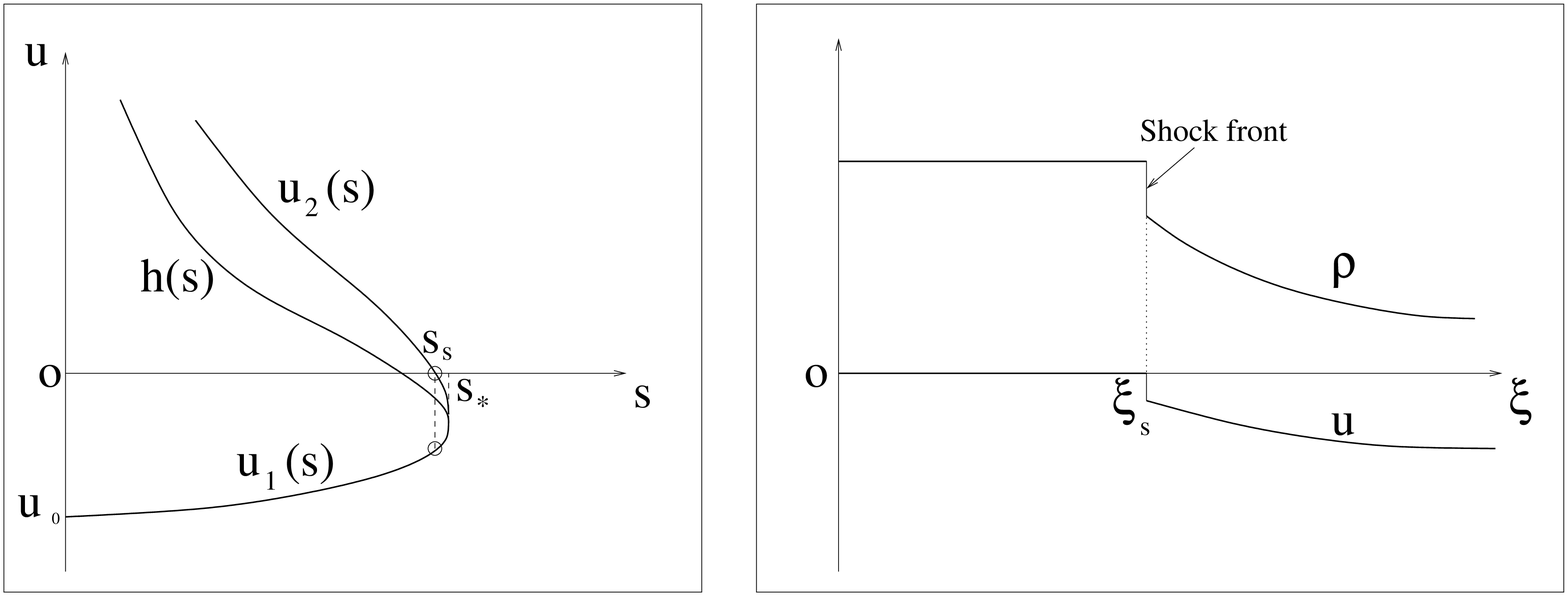}
\caption{\footnotesize Discontinuous solution with a single compression shock.}
\label{fig6}
\end{center}
\end{figure}

\subsection{Equation of state II}
\subsubsection{$\tau_0\leq \tau_1^{i}$} The discussion is similar to that of section 4.1, since  $\tau_1'(s)<0$ as $s>0$ and $p''(\tau)>0$ as $\tau<\tau_0$.

\subsubsection{$\tau_1^{i}<\tau_0< \tau_2^{i}$}
There are the following two cases:
\begin{itemize}
  \item There exists a $s_*>0$
such that $u_1(s)<h(\rho_1(s), s)$ as $0<s<s_*$ and $u_1(s_*)=h(\rho_1(s_*), s_*)<0$.
  \item There exists a $s_*>0$
such that $u_1(s)<h(\rho_1(s), s)$ as $0<s<s_*$ and $u_1(s_*)=h(\rho_1(s_*), s_*)=0$. (By Lemma \ref{lem4.1}, we have $\tau_1(s_*)\in (\tau_1^{i}, \tau_2^i)$ in this case.)
\end{itemize}

The structure of the solution for the first case is similar to that of section 4.1, since $
1/s~>~u_1(s)+\sqrt{p'(\rho_1(s))}
$
as $0<s<s_*$.

The solution for the second case has the form
$$
(u, \rho)(x, t)=\left\{
                 \begin{array}{ll}
                   (u_1, \rho_1)(s), & \hbox{$s<s_{*}$,} \\[4pt]
                   (0, \rho_1(s_{*})), & \hbox{$s>s_{*}$.}
                 \end{array}
               \right.
$$



\subsubsection{$\tau_0> \tau_2^{i}$} There are the following two cases:
\begin{itemize}
  \item There exists a $s_{*}>0$ such that $u_1(s)<0<h_1(\rho_1(s), s)$ as $s<s_*$ and  $u_1(s_*)=h(\rho_1(s_*), s_*)=0$  and $\tau_1(s_*)\in (\tau_1^{i}, \tau_2^i)$.
\item There exists a $s_{*}>0$ such that $u_1(s)<h_1(\rho_1(s), s)$ as $s<s_*$ and $u_1(s_*)=h(\rho_1(s_*), s_*)<0$ and $\tau_1(s_*)\in[\tau_2^{i}, \tau_0)\cup(0, \tau_1^{i}]$.
\end{itemize}
\begin{rem}
By (\ref{100504}) and (\ref{100505}),  it is impossible to have a $s_*>0$ such that $u_1(s)<h_1(\rho_1(s), s)$ as $s<s_*$ and $u_1(s_*)=h(\rho_1(s_*), s_*)<0$  and $\tau_1(s_*)\in  (\tau_1^{i}, \tau_2^i)$.
\end{rem}

In what follows,  we are going to discuss the second case. 
For $\tau_2^i<\tau_1<\tau_3$,
we let $\psi(\tau_1)$ be defined such that
\begin{equation}
\frac{p(\tau_1)-p(\psi(\tau_1))}{\tau_1-\psi(\tau_1)}=p'(\psi(\tau_1)) \quad \mbox{and}\quad\tau_1^i< \psi(\tau_1)<\tau_2^i.
\end{equation}
Here, $\tau_3$ is defined in Corollary \ref{cor2}.

\vskip 4pt
We first consider the case of $\tau_2^{i}<\tau_1(s_*)<\tau_0<\tau_3$.
Let
\begin{equation}\label{100201}
\hat{\xi}(s)=u_1(s)+\tau_1(s)\sqrt{-p'(\psi(\tau_1(s)))}\quad \mbox{and}\quad F(s)=\frac{1}{s}-\hat{\xi}(s).
\end{equation}
Then we have
\begin{equation}\label{1006021}
\lim\limits_{s\rightarrow0}F(s)=+\infty
\end{equation}
and
\begin{equation}\label{1006022}
\begin{aligned}
F(s_*)&=\frac{1}{s_*}-u_1(s_*)-\tau_1(s_*)\sqrt{-p'(\psi(\tau_1(s_*)))}\\
&=\frac{1}{s_*}-u_1(s_*)-\tau_1(s_*)\sqrt{-p'(\tau_1(s_*))}+\tau_1(s_*)\Big(\sqrt{-p'(\tau_1(s_*))}-\sqrt{-p'(\psi(\tau_1(s_*)))}\Big)
\\
&=\tau_1(s_*)\Big(\sqrt{-p'(\tau_1(s_*))}-\sqrt{-p'(\psi(\tau_1(s_*)))}\Big)<0.
\end{aligned}
\end{equation}

Since $
1/s~>~u_1(s)+\sqrt{p'(\rho_1(s))}
$ and $\tau_2^i<\tau_1(s)<\tau_0$
as $0<s<s_*$, for any $s\in (0, s_*)$ there exists an admissible compression shock with the speed $1/s$ and the front side state $(u_1, \rho_1)(s)$. Let the back side state of the shock $(u_2, \rho_2)(s)$ ($0<s\leq s_*$) be determined by (\ref{backside}). We have
\begin{equation}\label{100603}
u_2(s_{*})=u_1(s_*)<0\quad \mbox{and}\quad  \lim\limits_{s\rightarrow 0}u_2(s)=+\infty.
\end{equation}

From Corollary \ref{cor2} we know that
 if $F(s)=0$ then (\ref{backside}) has two solutions $(u_{2}^{+}, \tau_{2}^{+})(s)$ and $(u_{2}^{-}, \tau_{2}^{-})(s)$, where $u_{2}^{+}(s)>u_{2}^{-}(s)$ and $\tau_{2}^{+}(s)<\tau_1^i<\tau_{2}^{-}(s)<\tau_2^i$.
So, by (\ref{1006021}) and (\ref{1006022}) we can see that $u_2(s)$ is piecewise continuous on $(0, s_*)$.
Hence, we can not determine whether or not $u_2(s)$ has a zero point in $(0, s_*)$.

If there exists a $s_{s}\in (0, s_*)$ such that $u_2(s_s)=0$ then the problem (\ref{AE}), (\ref{IBV}) admits a discontinuous solution with a single shock with the speed $1/s_{s}$; see Figure \ref{fig6}(right).

\begin{figure}[htbp]
\begin{center}
\includegraphics[scale=0.43]{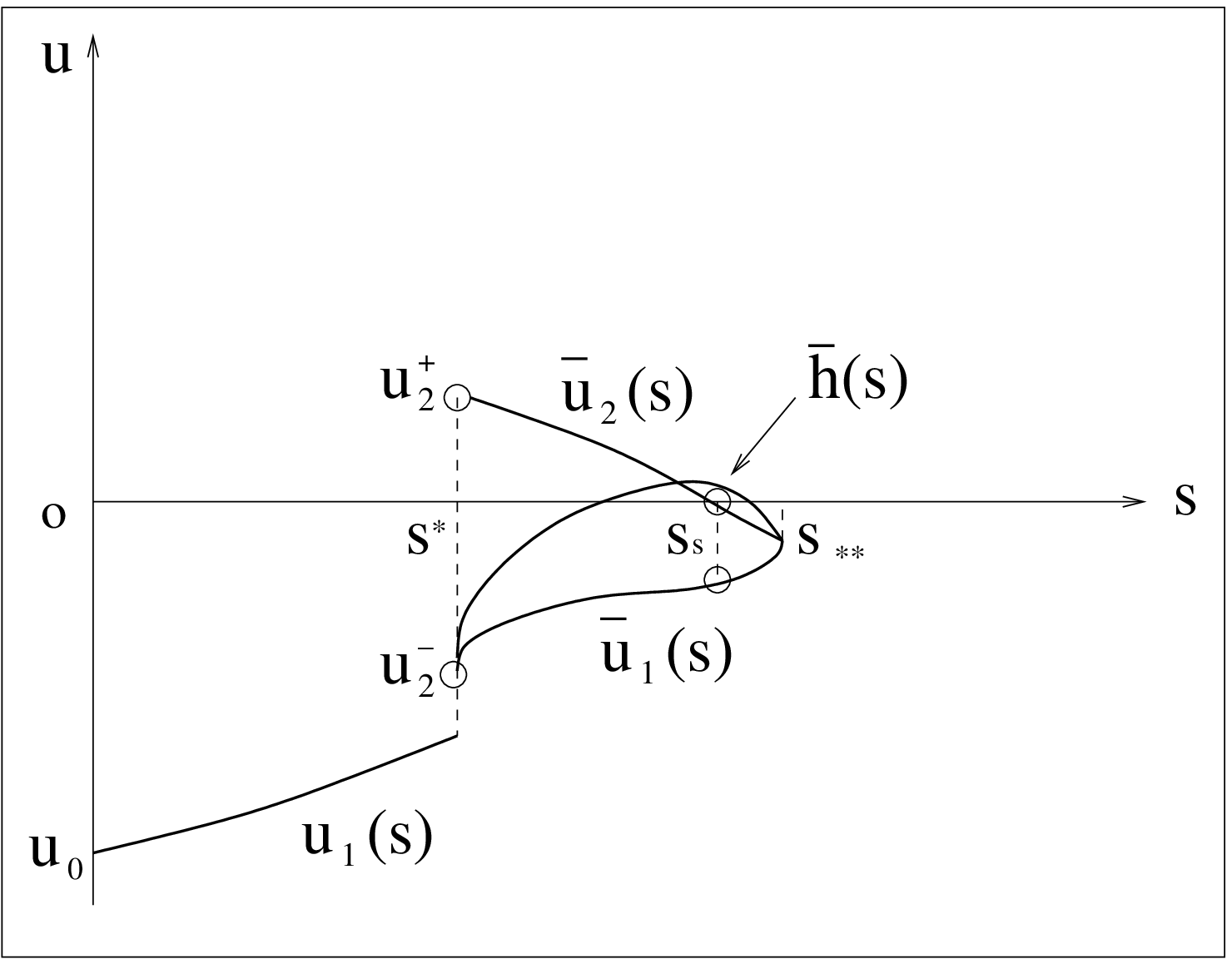}\quad\includegraphics[scale=0.43]{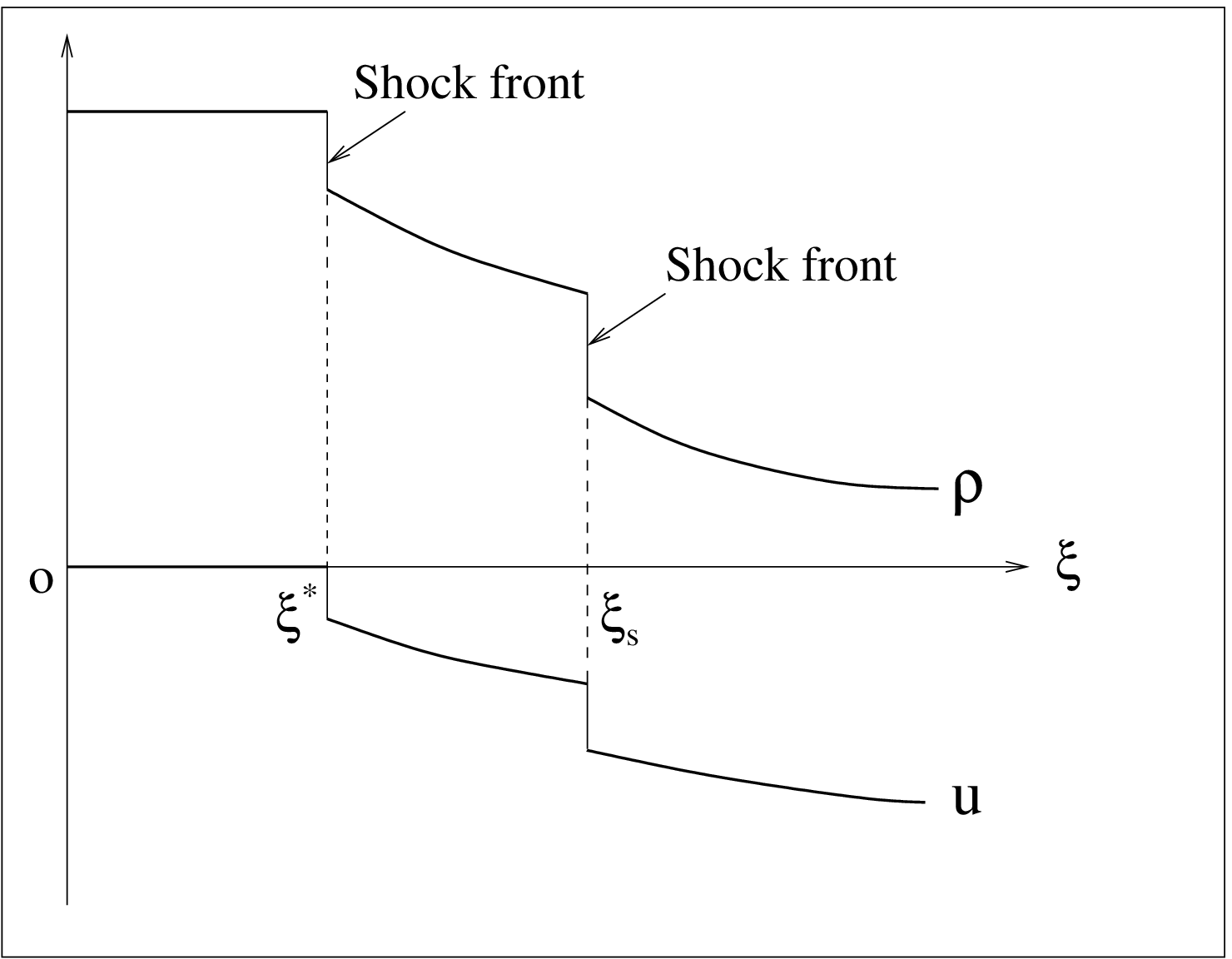}
\caption{\footnotesize Discontinuous solution with two compression shocks. }
\label{Figu2}
\end{center}
\end{figure}

If $u_2(s)\neq0$ for all $s\in(0, s_*)$, then by (\ref{100603}) there must exists a $s^{*}\in (0, s_*)$ such that
$$
F(s^{*})=0, \quad u_2^{+}(s^{*})>0, \quad u_2^{-}(s^{*})< 0, \quad \mbox{and}\quad
\tau_1^i<\tau_2^{-}(s^{*})<\tau_2^i.
$$
Then we consider system (\ref{ODE3})
with the data
\begin{equation}\label{100302}
(s, \rho)\mid_{u=u_2^{-}(s^{*})}~=~(s_*, \rho_2^{-}(s^{*})).
\end{equation}
\begin{lem}
When $\delta>0$ is sufficiently small, the initial value problem (\ref{ODE3}), (\ref{100302}) has a solution $(\bar{s}, \bar{\rho})(u)$ on $(u_2^{-}(s^{*}), u_2^{-}(s^{*})+\delta)$. Moreover, this solution satisfies $\frac{{\rm d} s}{{\rm d} u}>0$ and $s^2 p'(\rho)-(1-us)^2<0$ in $(u_2^{-}(s^{*}), u_2^{-}(s^{*})+\delta)$.
\end{lem}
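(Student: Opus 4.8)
The plan is to follow the proofs of Lemma~\ref{12101} and Lemma~\ref{100503} almost verbatim, the whole difficulty being the sign bookkeeping at the left endpoint $u=u_2^-(s^*)$. First, since $\tau_2^-(s^*)\in(\tau_1^i,\tau_2^i)$ we have $p'(\rho_2^-(s^*))=-(\tau_2^-(s^*))^2p'(\tau_2^-(s^*))>0$, and since $u_2^-(s^*)<0$ and $s^*>0$ the denominators in (\ref{ODE3}) do not vanish at the data point $(u,\bar s,\bar\rho)=(u_2^-(s^*),s^*,\rho_2^-(s^*))$. Thus (\ref{ODE3}), (\ref{100302}) has a unique local solution $(\bar s,\bar\rho)(u)$ on some interval $(u_2^-(s^*),u_2^-(s^*)+\delta)$, and it remains only to control the sign of $G(u):=\bar s(u)^2p'(\bar\rho(u))-(1-u\bar s(u))^2$ and of ${\rm d}\bar s/{\rm d}u$ there.

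The crucial preliminary is that $G$ vanishes at the endpoint. By the definition (\ref{100201}) of $\hat\xi$ together with the defining relation of $\psi$, $\hat\xi(s)$ is exactly the speed of the forward shock joining the front state $(u_1,\tau_1)(s)$ to the back state $\psi(\tau_1(s))$; hence $F(s^*)=0$ forces $\tau_2^-(s^*)=\psi(\tau_1(s^*))$, so the shock line through $(\tau_1(s^*),p_1)$ and $(\tau_2^-(s^*),p_2)$ is tangent to the graph of $p$ at $\tau_2^-(s^*)$, i.e. $\frac{p(\tau_2^-)-p(\tau_1)}{\tau_2^--\tau_1}=p'(\tau_2^-)$ at $s=s^*$. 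Substituting this into (\ref{1}) and using $\tau\sqrt{-p'(\tau)}=\sqrt{p'(\rho)}$ gives $1/s^*=u_2^-(s^*)+\sqrt{p'(\rho_2^-(s^*))}$, so that $1-u_2^-(s^*)s^*=s^*\sqrt{p'(\rho_2^-(s^*))}$ and hence $G(u_2^-(s^*))=0$; in particular ${\rm d}\bar s/{\rm d}u=0$ at that point.

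Finally I would differentiate $G$ along (\ref{ODE3}) exactly as in Lemma~\ref{12101} and Lemma~\ref{100503}, using $p'(\rho)=-\tau^2p'(\tau)$ and $p''(\rho)=2\tau^3p'(\tau)+\tau^4p''(\tau)$. The term proportional to ${\rm d}\bar s/{\rm d}u$ carries the factor $G$ and therefore vanishes at $u=u_2^-(s^*)$, leaving a single surviving term whose sign equals that of $\tau^3p''(\tau)(1-u\bar s)/p'(\rho)$. Because $\tau_2^-(s^*)\in(\tau_1^i,\tau_2^i)$ gives $p''(\tau_2^-)<0$, while $p'(\rho_2^-)>0$ and $1-u_2^-(s^*)s^*>0$ (as $u_2^-(s^*)<0$), this term is negative, so ${\rm d}G/{\rm d}u<0$ at the endpoint and $G<0$ on $(u_2^-(s^*),u_2^-(s^*)+\delta)$ for $\delta$ small. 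Then ${\rm d}\bar s/{\rm d}u=G/(2p'(\bar\rho)u\bar s)$ is a negative numerator over a negative denominator (since $u<0$, $\bar s>0$, $p'(\bar\rho)>0$ persist by continuity for small $\delta$), so ${\rm d}\bar s/{\rm d}u>0$, completing the proof. The main obstacle is the endpoint identity $G(u_2^-(s^*))=0$: it rests on recognizing that $F(s^*)=0$ pins the back state at the tangency point $\psi(\tau_1(s^*))$, which is exactly what collapses the Rankine--Hugoniot sonic expression, after which the sign of ${\rm d}G/{\rm d}u$ is governed entirely by the nonconvexity $p''<0$ on $(\tau_1^i,\tau_2^i)$.
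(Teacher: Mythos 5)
Your proposal is correct and follows essentially the same route the paper intends: the paper itself omits this proof by reference to Lemma \ref{100503}, and your argument reproduces exactly that mechanism — using $F(s^{*})=0$ to identify $\tau_2^{-}(s^{*})$ with the tangency point $\psi(\tau_1(s^{*}))$, collapsing the Rankine--Hugoniot speed to the sonic identity $1/s^{*}=u_2^{-}(s^{*})+\sqrt{p'(\rho_2^{-}(s^{*}))}$ so that $\bar s^2p'(\bar\rho)-(1-u\bar s)^2$ vanishes at the endpoint, and then reading the sign of its $u$-derivative off $p''(\tau)<0$ on $(\tau_1^i,\tau_2^i)$. The sign bookkeeping ($u_2^{-}(s^{*})<0$ flipping the sign of the denominator relative to Lemmas \ref{12101} and \ref{100503}, giving ${\rm d}\bar s/{\rm d}u>0$) is handled correctly.
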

\begin{proof}
The proof is similar to that of Lemma \ref{100503}, we omit the details.
\end{proof}
Let $u=\bar{u}_1(s)$ be the inverse function of $s=\bar{s}(u)$ and $\bar{\rho}_1(s)=\bar{\rho}(\bar{u}_1(s))$.
It is obviously that $(\bar{u}_1, \bar{\rho}_1)(s)$ satisfies (\ref{ODE2}) in $\big(s^{*}, \bar{s}(u_2^{-}(s^{*})+\delta)\big)$. Moreover, by Lemma \ref{100503} we also have
$$
0~<~\bar{u}_1(s)~<~h(\bar{\rho}_1(s), s)\quad \mbox{and}\quad \bar{\tau}_1(s)<\tau_2^i
$$
as $s\in\big(s^{*}, \bar{s}(u_2^{-}(s^{*})+\delta)\big)$. When $s>s^{*}$ there are two cases: (a)
 there exists a $s_{**}>s^{*}$
such that $\bar{u}_1(s)<h(\bar{\rho}_1(s), s)$ as $s^*<s<s_{**}$ and $\bar{u}_1(s_{**})=h(\bar{\rho}_1(s_{**}), s_{**})=0$; (b)
 there exists a $s_{**}>s^{*}$
such that $\bar{u}_1(s)<h(\bar{\rho}_1(s), s)$ as $s^*<s<s_{**}$ and $\bar{u}_1(s_{**})=h(\bar{\rho}_1(s_{**}), s_{**})<0$.

The solution for case (a) has the form
$$
(u, \rho)(x, t)=\left\{
                 \begin{array}{ll}
                   (u_1, \rho_1)(s), & \hbox{$s<s^{*}$,} \\[4pt]
                   (\bar{u}_1, \bar{\rho}_1)(s), & \hbox{$s^{*}<s<s_{**}$,}\\[4pt]
(0, \bar{\rho}_1(s_{**})), & \hbox{$s>s_{**}$.}
                 \end{array}
               \right.
$$
For case (b),
since $
1/s~>~\bar{u}_1(s)+\sqrt{p'(\bar{\rho}_1(s))}
$ and $\bar{\tau}_1(s)<\tau_2^i$
as $s^*<s<s_{**}$, for any $s\in (s^{*}, s_{**})$ there exists an admissible compression shock with the speed $1/s$ and the front side state $(\bar{u}_1, \bar{\rho}_1)(s)$.
Let the back side state of the shock $(\bar{u}_2, \bar{\rho}_2)(s)$ ($s^{*}<s\leq s_{**}$) be determined by (\ref{backside}).
Then we have
$
\bar{u}_2(s_{**})=\bar{u}_1(s_{**})<0$.
Thus by $\bar{u}_2(s^{*})=u_2^{+}(s^{*})>0$ we know that there exists a $s_{s}\in (s^{*}, s_{**})$ such that $\bar{u}_2(s_{s})=0$. Hence, the problem (\ref{AE}), (\ref{IBV}) admits a discontinuous solution with two compression shocks. The solution has the form
$$
(u, \rho)(x, t)=\left\{
                 \begin{array}{ll}
                   (u_1, \rho_1)(s), & \hbox{$s<s^{*}$,} \\[4pt]
                   (\bar{u}_1, \bar{\rho}_1)(s), & \hbox{$s^{*}<s<s_{s}$,}\\[4pt]
(0, \bar{\rho}_2(s_{s})), & \hbox{$s>s_{s}$;}
                 \end{array}
               \right.
$$
see Figure \ref{Figu2}.

\vskip 4pt

We next discuss the case for $\tau_1(s_*)\in \{\tau_2^i\}\cup(0, \tau_1^i]$ or $\tau_0>\tau_3$. If $F(s)\geq 0$ as $\tau_2^i<\tau_1(s)\leq\tau_3$, then $${u}_2(s):=\left\{
                                                                \begin{array}{ll}
                                                                 u_2^{+}(s), & \hbox{$F(s)=0$;} \\[4pt]
u_2(s), & \hbox{otherwise}
                                                                \end{array}
                                                              \right.
$$ is a continuous function on $(0, s_*]$. Moreover, ${u}_2(s_*)=u_1(s_*)<0$ and $\lim\limits_{s\rightarrow 0}{u}_2(s)=+\infty$. Hence, there exists a $s_{s}\in (0, s^{*})$ such that ${u}_2(s_{s})=0$, and consequently the problem (\ref{AE}), (\ref{IBV}) admits a discontinuous solution with a single compression shock; see Figure \ref{fig6}(right).
If $F(s)$ is not nonnegative as $\tau_2^i<\tau_1(s)\leq\tau_3$, then the discussion will be similar to the previous discussions.

\subsection{Equation of state III}
\subsubsection{$\tau_0\leq \tilde{\tau}_1$} The discussion is similar to that of section 4.1, since  $\tau_1'(s)<0$ as $s>0$ and $p''(\tau)>0$ as $\tau<\tau_0$.
\subsubsection{$\tilde{\tau}_1<\tau_0\leq \tilde{\tau}_2$}
Let $s_{*}$ be determined by
$$
\int_{\rho_0}^{\tilde{\rho}_1}\frac{1}{\rho}{\rm d}\rho~=~\int_{0}^{s_{*}}\frac{2u_0}{u_0 s-1} {\rm d}s.
$$
Hence, we have
$$
u_1(s)=u_0, \quad \rho_1(s)=\rho_0\exp\Big(\int_{0}^{s}\frac{2u_0}{u_0 s-1} {\rm d}s\Big), \quad 0<s<s_*.
$$
We then have the following two cases:
(1) $u_0< 1/s_{*}-b_1$;
(2) $u_0\geq 1/s_{*}-b_1$.

\begin{figure}[htbp]
\begin{center}
\includegraphics[scale=0.43]{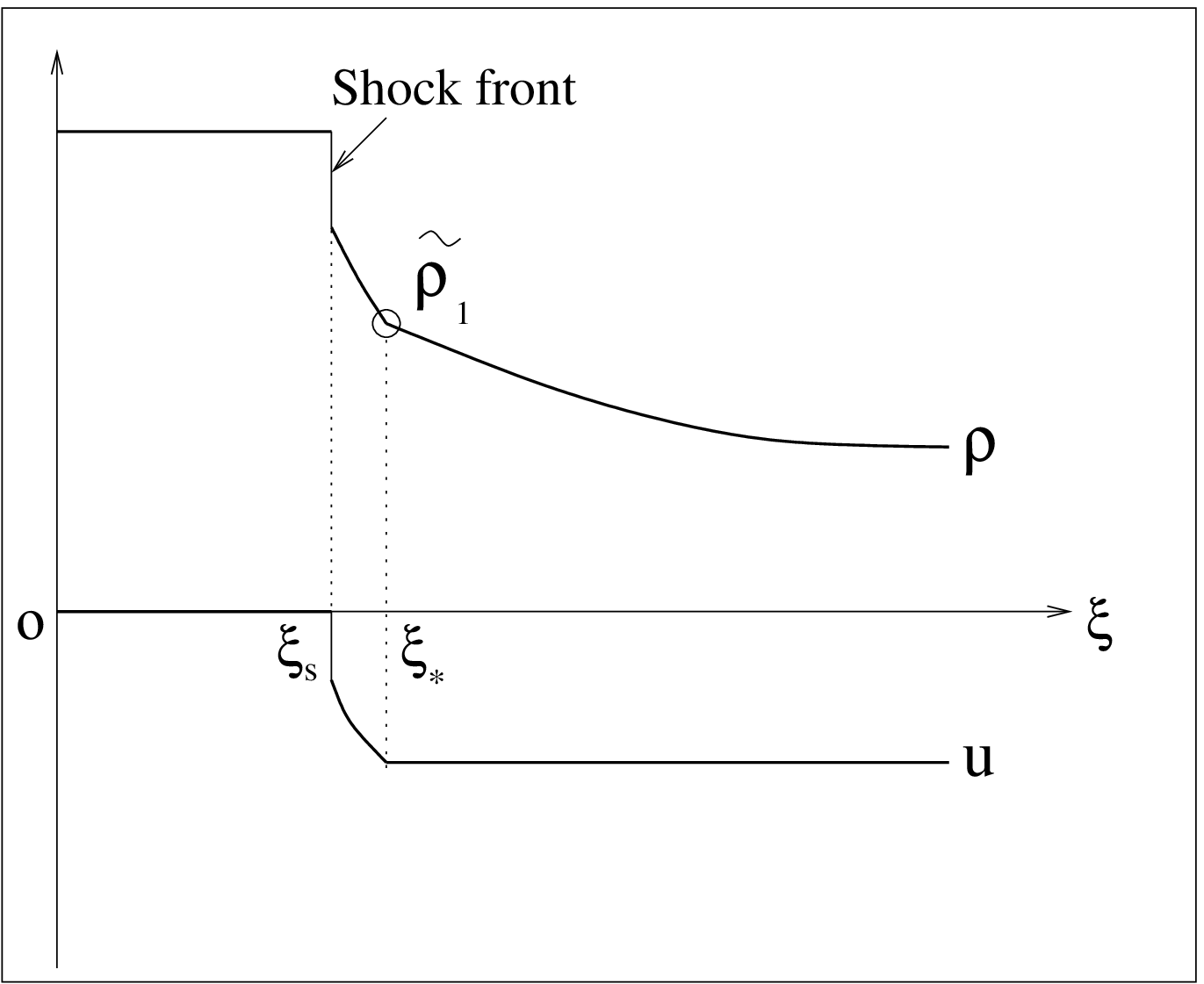}\quad\includegraphics[scale=0.42]{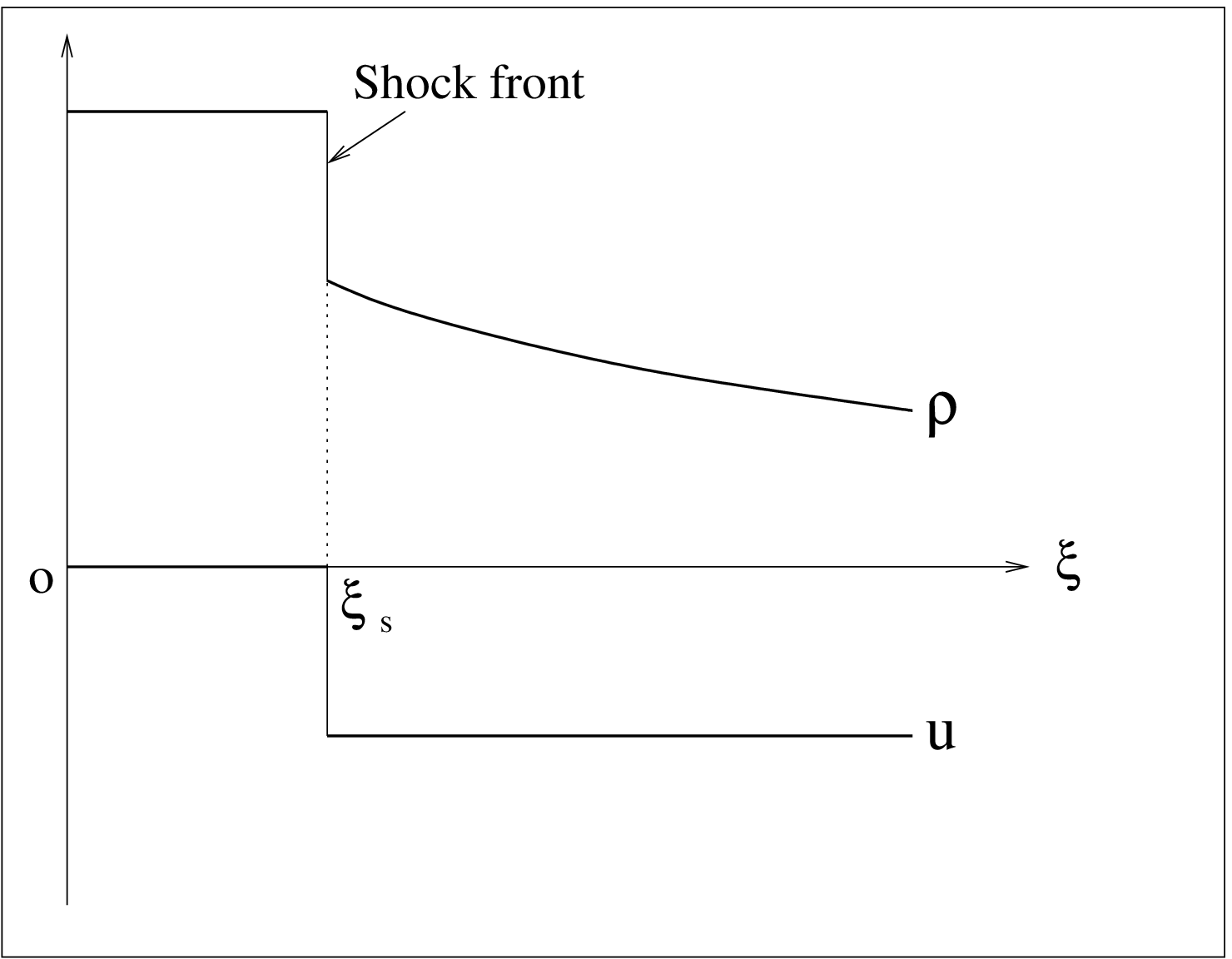}
\caption{\footnotesize Solutions with a single compression shock. Left: $u_0< 1/s_{*}-b_1$; right: $u_0\geq 1/s_{*}-b_1$.}
\label{fig7}
\end{center}
\end{figure}
If $u_0< 1/s_{*}-b_1$ then
we consider the system (\ref{ODE2}) with the initial data
\begin{equation}\label{4.2}
(u, \rho)(s_*)~=~(u_0, \tilde{\rho}_1).
\end{equation}
It is similar to Lemma \ref{lem4.1} that there exists a $s^{*}>s_*$
such that the solution $(u_1, \rho_1)(s)$ of the problem (\ref{ODE2}), (\ref{4.2}) satisfies $u_1(s)<h(\rho_1(s), s)$ as $s_*<s<s^*$ and $u_1(s^*)=h(\rho_1(s^*), s^{*})<0$.
Moreover, for any $s\in (0, s^{*})$
there exists an admissible forward compression shock with the speed $1/s$
and the front side state $(u_1, \rho_1)(s)$. The back side state of the shock $(u_2, \rho_2)(s)$ can be determined by (\ref{backside}). It is easy to see that
$u_2(s^{*})=u_1(s^*)<0$ and $\lim\limits_{s\rightarrow 0}u_2(s)=+\infty$.
Hence, there exists a $s_{s}\in (0, s^{*})$ such that $u_2(s_{s})=0$, and consequently the problem (\ref{AE}), (\ref{IBV}) admits a discontinuous solution with a single compression shock; see Figure \ref{fig7}.

Next, we discuss the case of $u_0\geq 1/s_{*}-b_1$. By Corollary \ref{cor5}, we know that
for any $s\in (0, s^{*})$,
there exists an admissible forward compression shock with the speed $1/s$
and the front side state $(u_1, \rho_1)(s)$. The back side state of the shock $(u_2, \rho_2)(s)$ can be determined by (\ref{backside}).
It is obviously that
$\lim\limits_{s\rightarrow 0}u_2(s)\rightarrow +\infty$.
Using $u_0> 1/s_{*}-b_1$, we also have $\lim\limits_{s\rightarrow s_*}\tau_2(s)=\tilde{\tau}_1$ and  $\lim\limits_{s\rightarrow s_*}u_2(s)=u_0<0$.
Thus, there must exists a $s_{s}\in (0, s_*)$ such that $u_2(s_s)=0$. The solution for this case can be illustrated by Figure \ref{fig7}(right).

\subsubsection{$\tau_0> \tilde{\tau}_2$}
We have the following three cases:
\begin{itemize}
  \item There exists a $s_{*}>0$ such that $u_1(s)<h(\rho_1(s), s)$ as $s<s_*$ and $u_1(s_*)=h(\rho_1(s_*), s_*)<0$ and $\tau_1(s_*)\in [\tilde{\tau}_2, \tau_0)$.
  \item There exists a $s_{*}>0$ such that $u_1(s)<h(\rho_1(s), s)$ as $s<s_*$ and $\tau_1(s_*)=\tilde{\tau}_1$ and $u_1(s_*)\geq\frac{1}{s_*}-b_1$.
  \item There exists a $s_{*}>0$ such that $u_1(s)<h(\rho_1(s), s)$ as $s<s_*$ and $u_1(s_*)=h(\rho_1(s_*), s_*)<0$ and $\tau_1(s_*)<\tilde{\tau}_1$. (Remark: $u_1(s_*)<\frac{1}{s_*}-b_1$ in this case.)
\end{itemize}

We now discuss the first case. 
For $\tau_1>\tilde{\tau}_2$, we let $\kappa(\tau_1)$ be defined such that
\begin{equation}
\kappa(\tau_1)=\frac{p(\tilde{\tau}_{2})-p(\tau_1)}{\tilde{\tau}_{2}-\tau_1}.
\end{equation}
Then we have $-\kappa(\tau_1)>-p'(\tau_1)$, since $\tau_1>\tilde{\tau}_2$.

Let
$$\hat{\xi}(s)=u_1(s)+\tau_1(s)\sqrt{-\kappa(\tau_1(s))}, \quad F(s)=\frac{1}{s}-\hat{\xi}(s).$$
Then we have
\begin{equation}\label{12202}
\lim\limits_{s\rightarrow 0}F(s)=+\infty
\end{equation}
 and
\begin{equation}\label{11501}
\begin{aligned}
F(s_*)&=\frac{1}{s_*}-u_1(s_*)-\tau_1(s_*)\sqrt{-\kappa(\tau_1(s_*))}\\&=
\frac{1}{s_*}-u_1(s_*)-\tau_1(s_*)\sqrt{-p'(\tau_1(s_*))}+\tau_1(s_*)\Big(\sqrt{-p'(\tau_1(s_*))}-\sqrt{-\kappa(\tau_1(s_*))}\Big)
\\&=\tau_1(s_*)\Big(\sqrt{-p'(\tau_1(s_*))}-\sqrt{-\kappa(\tau_1(s_*))}\Big)<0.
\end{aligned}
\end{equation}
Since $
1/s~>~u_1(s)+\sqrt{p'(\rho_1(s))}
$ and $\tilde{\tau}_2<\tau_1(s)<\tau_0$
as $0<s<s_*$, for any $s\in (0, s_*)$,  there exists an admissible compression shock with the speed $1/s$ and the front side state $(u_1, \rho_1)(s)$. The back side state of the shock $(u_2, \rho_2)(s)$ ($0<s\leq s_*$) can be determined by (\ref{backside}). Moreover, we have
$u_2(s_{*})=u_1(s_*)<0$ and $\lim\limits_{s\rightarrow 0}u_2(s)=+\infty$.
However, by (\ref{12202}) and (\ref{11501}) we know that $u_2(s)$ is not continuous in $(0, s_*)$.
Since, if $F(s)=0$ then (\ref{backside}) has two solutions $(u_{2}^{+}, \rho_{2}^{+})(s)$ and $(u_{2}^{-}, \rho_{2}^{-})(s)$, where $u_{2}^{+}(s)>u_{2}^{-}(s)$ and $\rho_{2}^{+}(s)>\rho_{2}^{-}(s)$.

If there exists a $s_{s}\in (0, s_*)$ such that $u_2(s_s)=0$, then the problem (\ref{AE}), (\ref{IBV}) admits a discontinuous solution with a single shock.

If $u_2(s)\neq0$ for all $s\in(0, s_*)$, then there must exists a $s^{*}\in (0, s_*)$ such that
$u_2^{+}(s^{*})>0$, $u_2^{-}(s^{*})<0$, and
$\tau_2^{-}(s^{*})=\tilde{\tau}_2$.
 Then the discussion for $s>s^{*}$ is similar to that of section 4.3.2.
The problem has a discontinuous solution with two compression shocks; see Figure \ref{fig10}.

\begin{figure}[htbp]
\begin{center}
\includegraphics[scale=0.43]{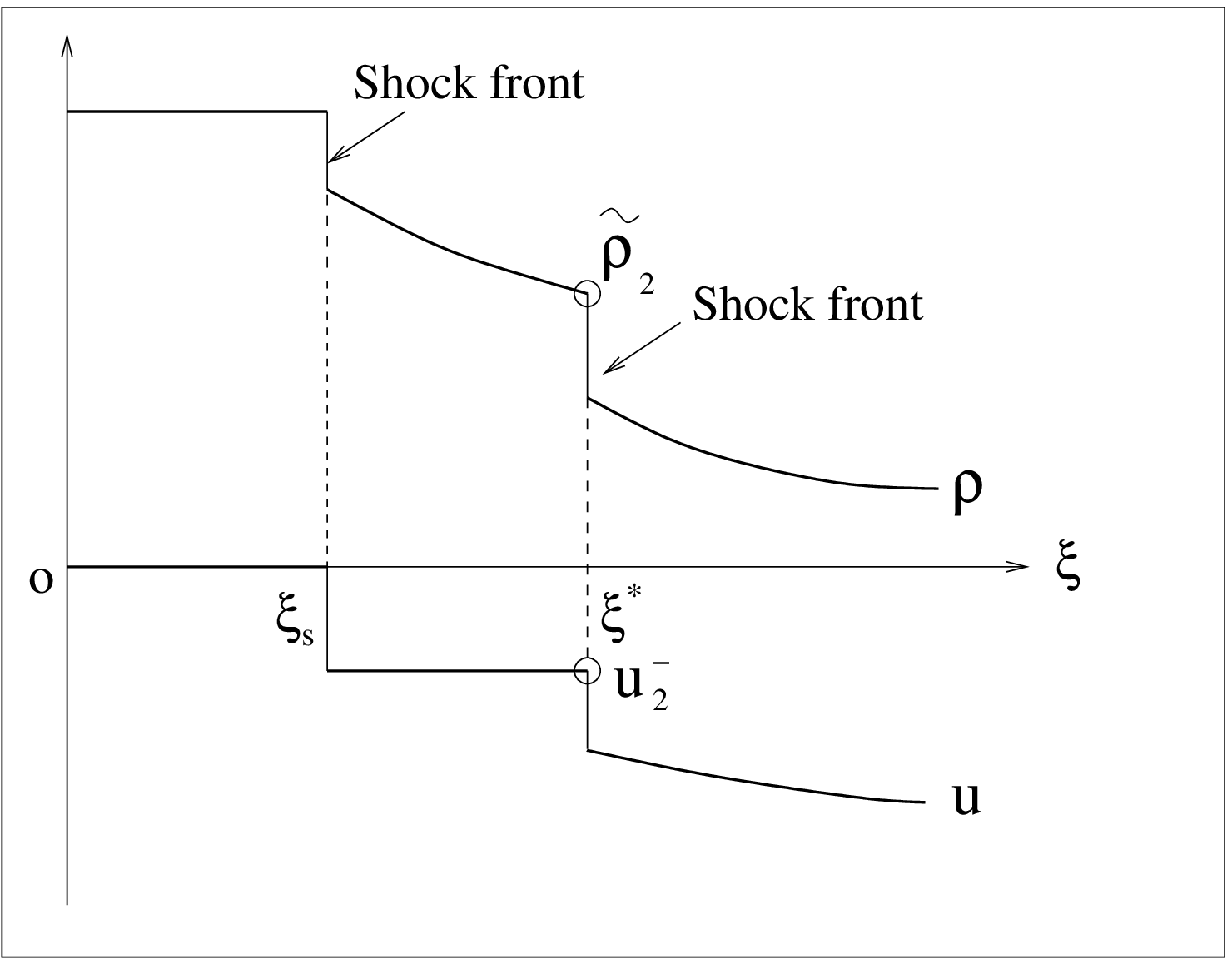}\quad \includegraphics[scale=0.43]{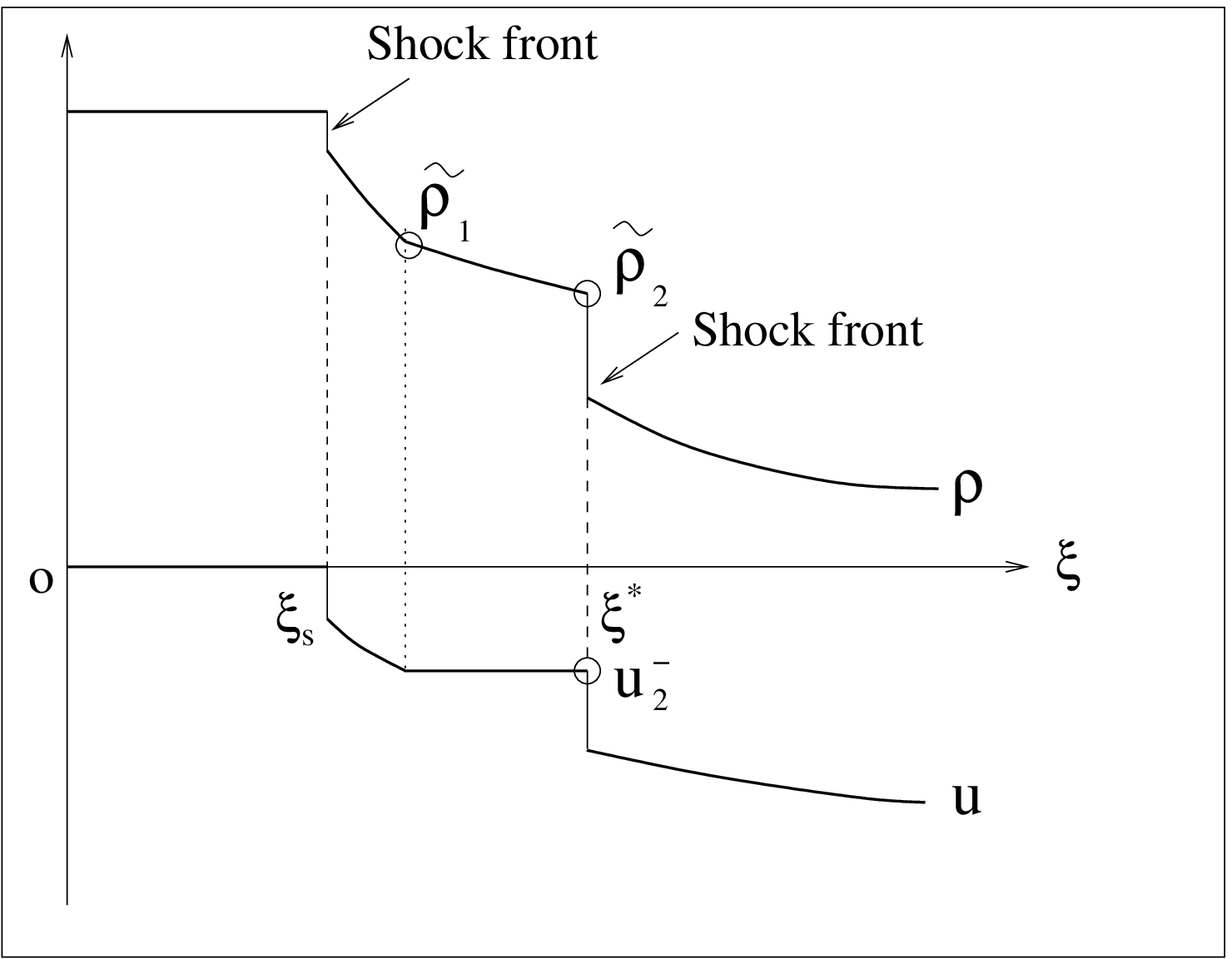}
\caption{\footnotesize Solutions with two compression shocks for $u_0<0$ and $\tau_0>\tilde{\tau}_2$.}
\label{fig10}
\end{center}
\end{figure}

We next discuss the second case.
Similarly, for any $s\in (0, s_*)$,  there exists an admissible compression shock with the speed $1/s$ and the front side state $(u_1, \rho_1)(s)$. The back side state of the shock $(u_2, \rho_2)(s)$ ($0<s\leq s_*$) can be determined by (\ref{backside}).
Moreover, by $u_1(s_*)\geq\frac{1}{s_*}-b_1$ we have $\lim\limits_{s\rightarrow s_*}u_2(s)=u_1(s_*)<0$.
Let $s_1$ be the point such that $\tau_1(s_1)=\tilde{\tau}_2$.
Then the discussion can be divided into the following two cases: (1) $F(s)\geq 0$ as $s\in (0, s_1)$; (2) $F(s)$ is not nonnegative in $(0, s_1)$.

If $F(s)\geq 0$ as $s\in (0, s_1)$, we redefine $u_2(s)=\left\{
                                                                \begin{array}{ll}
                                                                  u_2(s), & \hbox{$F(s)>0$;} \\
                                                                  u_2^{+}(s), & \hbox{$F(s)=0$}
                                                                \end{array}
                                                              \right.$ as $0<s<s_1$. Then $u_2(s)$ is a continuous function on $(0, s_*)$.  Thus, there exists a $s_{s}\in (0, s_{*})$ such that $u_2(s)=0$. And consequently, the problem (\ref{AE}), (\ref{IBV}) admits a discontinuous solution with a single compressible shock.
If $F(s)$ is not nonnegative on $(0, s_1)$ then the discussion will be similar to the first case.

Actually, the discussion for the third case is similar to that of the second case. We omit the details.

\vskip 32pt
\centerline{\sc \bf Acknowledgements}%
\vskip 8pt
The first author is partially supported by Natural Science foundation of Zhejiang Province (LQ19A010003).
The second author is partially supported by the grant of ``Shanghai Altitude
Discipline".

\vskip 32pt
\small

\end{document}